\newtheorem{theorem}{Theorem}[section]
\newtheorem{proposition}[theorem]{Proposition}
\newtheorem*{recap}{Proposition~\ref{character values prop}}
\theoremstyle{definition}
\newtheorem{definition}[theorem]{Definition}
\newtheorem{example}[theorem]{Example}
\newcommand{\F}{\mathbb{F}}
\newcommand{\Fq}{\F_q}
\def\FF{\F}
\newcommand{\GL}{\mathrm{GL}}
\newcommand{\CC}{\mathbb{C}}
\newcommand{\RR}{\mathbb{R}}
\newcommand{\QQ}{\mathbb{Q}}
\newcommand{\ZZ}{\mathbb{Z}}
\newcommand{\NN}{\mathbb{N}}
\newcommand{\normchi}{\widetilde{\chi}}
\newcommand{\codim}{\operatorname{codim}}
\newcommand{\symm}{\mathfrak{S}}
\newcommand{\Cusp}{\textrm{Cusp}}
\renewcommand{\P}{\mathcal{P}}
\newcommand\Irr{\operatorname{Irr}}
\newcommand\Tr{\operatorname{Tr}}
\newcommand\hook[2]{{\left( #2-#1, 1^{#1} \right)}}
\newcommand\llambda{{\underline{\lambda}}}
\newcommand\one{{\bf 1}}
\newcommand\defn[1]{{\it{#1}}}
\newcommand\ttt{\mathbf{t}}
\newcommand\hcprod{*}
\newcommand\Class{\operatorname{Class}}
\newcommand\wt{\operatorname{wt}}
\theoremstyle{remark}
\newtheorem{remark}[theorem]{Remark}
\newtheorem{question}[theorem]{Question}
\numberwithin{equation}{section}
\newcommand\qbin[3]{\left[\begin{matrix} #1 \\ #2 \end{matrix} \right]_{#3}}
\def\SS{\mathfrak{S}}
\begin{document}

\title{Absolute order in general linear groups}
\author{Jia Huang}
\address{J. Huang, Department of Mathematics and Statistics, University of Nebraska at Kearney, Kearney, NE 68849, USA}
\author{Joel Brewster Lewis}
\address{J.\ B. Lewis and V. Reiner, School of Mathematics, University of Minnesota, Minneapolis, MN 55455, USA}
\author{Victor Reiner}

\thanks{Work partially supported by NSF grants DMS-100193, DMS-1148634, and DMS-1401792.
}

\keywords{finite general linear group, Singer cycle, regular elliptic, absolute order, reflection, length, Catalan, noncrossing, flag f-vector}
\subjclass{20G40, 05E10, 20C33}

\dedicatory{To the memory of J.\ A. Green, R. Steinberg, and A. Zelevinsky}

\date{\today}

\begin{abstract}
This paper studies a partial order on the general linear group $\GL(V)$ called the absolute order, derived from viewing $\GL(V)$ as a group generated by reflections, that is, elements whose fixed space has codimension one.  The absolute order on $\GL(V)$ is shown to have two equivalent descriptions, one via additivity of length for factorizations into reflections, the other via additivity of fixed space codimensions.   Other general properties of the order are derived, including self-duality of its intervals.

Working over a finite
field $\FF_q$, it is shown via a complex character computation 
that the poset interval from the identity to a Singer cycle (or any regular elliptic element) in $\GL_n(\FF_q)$ has a strikingly simple formula for the number of chains passing through a prescribed set of ranks. 
\end{abstract}

\maketitle


\section{Introduction}

This paper studies, as a reflection group, the full general linear group $\GL(V)\cong \GL_n(\FF)$, where 
$V$ is an $n$-dimensional
vector space over a field $\FF$.  An element $g$ in $\GL(V)$
is called a \defn{reflection} if its \defn{fixed subspace} 
$V^g:=\{v\in V:\ gv=v\} = \ker(g - 1)$ has codimension $1$.
A \defn{reflection group} is a subgroup of $\GL(V)$ generated by reflections.\footnote{Our definitions here deviates slightly from the literature, where one often insists that a reflection have \emph{finite order}.  In particular, by our definition, the determinant of a reflection $g$ need not be a root of unity in $\FF^\times$, and $\GL(V)=\GL_n(\FF)$ is still generated by reflections even when $\FF$ is infinite.}  It is not hard to show
that $\GL(V)$ itself is generated by its subset $T$ of reflections, and hence is a reflection group.

{\it Finite, real} reflection groups $W$ inside $\GL_n(\RR)\cong\GL(V)$ are well-studied classically via their {\it Coxeter presentations} $(W,S)$.  Here $S$ is a choice of $n$ generating {\it simple reflections}, which are the orthogonal 
reflections across hyperplanes that bound a fixed choice of {\it Weyl chamber} for $W$.
Recent work by Brady and Watt \cite{BradyWatt2} and Bessis \cite{Bessis} has focused attention on an alternate presentation, generating real reflection groups $W$ by their subset $T$ of {\it all} reflections.  Their work makes use of the coincidence, first proven by Carter \cite{Carter},
between two natural functions  $W \rightarrow \{0,1,2,\ldots\}$ defined as follows for $w \in W$:
\begin{compactitem}
\item
the {\it reflection length}\footnote{Warning:  
this is {\it not} the usual Coxeter group length $\ell_S(w)$ coming from the Coxeter system $(W,S)$.} 
given by
$
\ell_T(w):=\min\{\ell: w=t_1 t_2 \cdots t_\ell \text{ with }t_i \in T\},
$
and 
\item the {\it fixed space codimension} 
given by
$
\codim(V^w):= n-\dim(V^w).
$
\end{compactitem}
While both of these functions can be defined for all reflection groups, it has been observed (see, e.g., Foster-Greenwood \cite{FosterGreenwood}) that for non-real reflection groups, and even for most finite complex reflection groups,
these two functions differ. This leads to two partial orders, 
\begin{compactitem}
\item
the {\it $T$-prefix order}: 
$g \leq h$ if
$\ell_T(g)+\ell_T(g^{-1}h) =\ell_T(h)$, and
\item
the {\it fixed space codimension order}: 
$g \leq h$ if
$\codim(V^g)+\codim(V^{g^{-1}h})=\codim(V^h)$.
\end{compactitem}
We discuss some general properties of these orders in Section~\ref{section:generalities}.
One of our first results, Proposition~\ref{length=codim}, is the observation that
when considering  \emph{as a reflection group} the full general linear group $\GL(V)$ for a finite-dimensional vector space $V$ over a field, 
one again has the coincidence $\ell_T(g)=\codim(V^g)$, 
and hence the two partial orders above give the same order on $\GL(V)$, which we call the {\it absolute order}.

We proceed to prove two basic 
enumerative results about this absolute order on $\GL(V)$ when 
the field $\FF=\FF_q$ is finite.
First, Section~\ref{ranks-in-whole-order} uses M\"obius inversion
to count the elements in $\GL_n(\FF_q)$ of a fixed poset rank,  
that is, those with $\ell_T(g)=\codim(V^g)$ fixed. 

Second, in Section~\ref{section:chains}, we examine the interval $[e,c]$ in the absolute order on $\GL_n(\FF_q)$ from the identity element $e$ to a \emph{Singer cycle} $c$.
There has been established in recent years a close analogy between
 the Singer cycles in $\GL_n(\FF_q)$ and {\it Coxeter elements} in real reflection groups; see  \cite[\S8--\S9]{CSP}, \cite[\S7]{RStantonWebb}, \cite{LRS}.  The interval from the identity to a Coxeter element in the absolute order on a real reflection group $W$ is a very important and well-behaved poset, called the poset of {\it noncrossing partitions} for $W$. 
Our main result, Theorem~\ref{thm:flag-f-vector}, gives a strikingly simple formula for the {\it flag $f$-vector} of $[e,c]$ in $\GL_n(\FF_q)$:  fixing an ordered composition $\alpha=(\alpha_1,\ldots,\alpha_{m})$ of $n=\sum_i \alpha_i$,  the number of chains
$e=g_0 < g_1 < \cdots <g_{m-1} < g_m=c$ in absolute order having $\ell_T(g_i)-\ell_T(g_{i-1})=\alpha_i$ is
\[
q^{\varepsilon(\alpha)} \cdot (q^n-1)^{m-1}
  \quad \text{ where } \quad 
    \varepsilon(\alpha):=\sum_{i = 1}^m (\alpha_i-1)(n-\alpha_i).
\]
The analogous flag $f$-vector formulas in real reflection groups are not as simple.

The proof of Theorem~\ref{thm:flag-f-vector} is involved, using a character-theoretic enumeration technique due to Frobenius, along with information about the complex characters of $\GL_n(\FF_q)$ that goes back to Green \cite{Green} and Steinberg \cite{Steinberg}.  The proof has the virtue of applying not only to Singer cycles in $\GL_n(\FF_q)$, but also to elements which are {\it regular elliptic}; see Section~\ref{section:chains} for the definition.   
Section~\ref{reformulation-section} reformulates the flag $f$-vector in terms of certain subspace arrangements.  We hope that this may lead to a more direct approach to Theorem~\ref{thm:flag-f-vector}.
Section~\ref{remarks}  collects further questions and remarks.


\subsection*{Acknowledgements}
The authors thank Christos Athanasiadis, Valentin F\'eray, Alejandro Morales, Kyle Petersen, and Dennis Stanton for helpful remarks, suggestions, and references.


\section{Length and prefix order}
\label{section:generalities}

The next few subsections collect some easy general properties of the length function with respect to a choice of generators for a group, and the resulting partial order defined in terms of prefixes of reduced expressions.  We borrow heavily from work of Armstrong \cite[\S 2.4]{Armstrong}, Bessis \cite[\S 0.4]{Bessis}, Brady and Watt \cite{BradyWatt}, and Foster-Greenwood \cite{FosterGreenwood}, while attempting to clarify the hypotheses responsible for various properties.

\subsection{Generated groups}

\begin{definition}
A {\it generated group} is a pair $(G,T)$ where $G$ is a group and $T \subseteq G$ a subset
that generates $G$ as a monoid:  every $g$ in $G$ has at least one
{\it $T$-word for $g$}, meaning a sequence $(t_1,t_2,\ldots,t_\ell)$ with $g=t_1 t_2 \cdots t_\ell$.
The length function $\ell=\ell_T: G \rightarrow \NN$ is defined by
\[
\ell(g):=\min\{\ell: g=t_1 t_2 \cdots t_\ell \text{ with }t_i \in T\}.
\]
That is, $\ell(g)$ is the minimum length of a $T$-word for $g$.  Words for $g$ achieving this minimum length 
are called {\it $T$-reduced}.
Equivalently, $\ell(g)$ is the length of the shortest directed path from the identity $e$ to $g$ 
in the Cayley graph of $(G, T)$.
\end{definition}

It should be clear from this definition that $\ell$ is {\it subadditive}, meaning that
\begin{equation}
\label{length-subadditivity}
\ell(gh) \leq \ell(g) +\ell(h).
\end{equation}
Understanding the case where equality occurs in \eqref{length-subadditivity}
motivates the next definition.

\begin{definition}[Prefix order]
\label{order-definition}
Given a generated group $(G,T)$, define a binary relation $g \leq h$ on $G$ by
any of the following three equivalent conditions.
\begin{compactenum}[(i)]
\item Any $T$-reduced word $(t_1,\ldots,t_{\ell(g)})$ 
for $g$ extends to a $T$-reduced word $(t_1,\ldots,t_{\ell(h)})$ for $h$.
\item There is a shortest directed path $e$ to $h$ in the Cayley graph for $(G,T)$ going via $g$.
\item $\ell(g)+\ell(g^{-1}h)=\ell(h).$
\end{compactenum}
\end{definition}

\noindent
Condition (i) makes the following proposition a straightforward exercise, left to the reader.

\begin{proposition}
For $(G, T)$ a generated group, the binary relation $\leq$ is a partial order on $G$, with 
the identity $e$ as minimum element.  It is graded by the function $\ell(-)$, in the sense that for any $g < h$, one has
$\ell(h)=\ell(g)+1$ if and only if there is no $g'$ with $g < g' < h.$
\end{proposition}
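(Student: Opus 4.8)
The plan is to verify the two defining conditions of a partial order—reflexivity, antisymmetry, transitivity—using condition~(iii) of Definition~\ref{order-definition}, and then to establish the gradedness statement using condition~(i). Throughout I will freely use subadditivity \eqref{length-subadditivity} and the basic fact that $\ell(g)=\ell(g^{-1})$, which follows because reversing a $T$-word for $g$ and inverting each letter produces a $T$-word for $g^{-1}$ of the same length (here $T$ need not be closed under inverses, but $\ell(g^{-1})\le\ell(g)$ and symmetrically $\ell(g)\le\ell(g^{-1})$, so they are equal).

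First, reflexivity: $\ell(e)=0$, so $\ell(g)+\ell(g^{-1}g)=\ell(g)+\ell(e)=\ell(g)$, giving $g\le g$. That $e$ is the minimum is the same computation with the roles reversed: $\ell(e)+\ell(e^{-1}g)=0+\ell(g)=\ell(g)$, so $e\le g$ for all $g$. For antisymmetry, suppose $g\le h$ and $h\le g$. The first gives $\ell(g)+\ell(g^{-1}h)=\ell(h)$ and the second gives $\ell(h)+\ell(h^{-1}g)=\ell(g)$; adding these and using $\ell(h^{-1}g)=\ell((h^{-1}g)^{-1})=\ell(g^{-1}h)$ yields $2\ell(g^{-1}h)=0$, so $\ell(g^{-1}h)=0$, which forces $g^{-1}h=e$, i.e.\ $g=h$. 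For transitivity, suppose $g\le h$ and $h\le k$, so $\ell(g)+\ell(g^{-1}h)=\ell(h)$ and $\ell(h)+\ell(h^{-1}k)=\ell(k)$. Then
\[
\ell(g)+\ell(g^{-1}h)+\ell(h^{-1}k)=\ell(k).
\]
On the other hand subadditivity gives $\ell(g^{-1}k)=\ell\big((g^{-1}h)(h^{-1}k)\big)\le \ell(g^{-1}h)+\ell(h^{-1}k)$, and also $\ell(k)\le\ell(g)+\ell(g^{-1}k)$. Combining these three inequalities forces $\ell(g)+\ell(g^{-1}k)=\ell(k)$, i.e.\ $g\le k$, which completes the proof that $\le$ is a partial order.

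For the grading statement, fix $g<h$; in particular $\ell(g)<\ell(h)$, since $g\le h$ with $\ell(g)=\ell(h)$ would force $\ell(g^{-1}h)=0$ and hence $g=h$. If there is no $g'$ with $g<g'<h$, I must show $\ell(h)=\ell(g)+1$. Using condition~(i), pick a $T$-reduced word $(t_1,\ldots,t_{\ell(g)})$ for $g$ and extend it to a $T$-reduced word $(t_1,\ldots,t_{\ell(h)})$ for $h$. If $\ell(h)\ge\ell(g)+2$, then the partial product $g':=t_1\cdots t_{\ell(g)+1}$ has $\ell(g')=\ell(g)+1$ (the initial segment of a reduced word is reduced), and condition~(i) shows $g<g'$ and $g'<h$, contradicting maximality; hence $\ell(h)=\ell(g)+1$. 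Conversely, if $\ell(h)=\ell(g)+1$ and $g<g'<h$, then $\ell(g)<\ell(g')<\ell(h)=\ell(g)+1$ is impossible, so no such $g'$ exists. This is the routine exercise alluded to in the text; the only point requiring a moment's thought is that initial segments of $T$-reduced words are themselves $T$-reduced, which is immediate from the definition of $\ell$ together with subadditivity. I do not anticipate a genuine obstacle here—everything reduces to subadditivity and the elementary identity $\ell(g)=\ell(g^{-1})$—so the ``main difficulty'' is merely bookkeeping the inequalities so that each equality is squeezed out correctly.
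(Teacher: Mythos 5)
Your overall structure is fine and, since the paper leaves this proposition as an exercise (suggesting condition~(i)), there is no official proof to deviate from; your use of condition~(iii) for the order axioms and condition~(i) for the grading statement is a perfectly reasonable route, and the transitivity, reflexivity, minimality of $e$, and grading arguments are all correct (including the observation that prefixes of reduced words are reduced).

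However, the ``basic fact'' $\ell(g)=\ell(g^{-1})$ that you announce at the outset is false in the generality of this proposition, and your justification of it is circular. A generated group only requires $T$ to generate $G$ \emph{as a monoid}; $T$ need not satisfy $T=T^{-1}$ (indeed the paper has a separate subsection devoted to the extra hypotheses $T=T^{-1}$, e.g.\ Proposition~\ref{prop:lengthpm}). Reversing a $T$-word and inverting its letters produces a word in $T^{-1}$, not in $T$, so it gives no bound on $\ell(g^{-1})$; and the claim ``$\ell(g^{-1})\le\ell(g)$ and symmetrically $\ell(g)\le\ell(g^{-1})$'' assumes exactly what is to be proved. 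Concretely, take $G=\ZZ/3\ZZ$ with $T=\{1\}$ (which does generate $G$ as a monoid): then $\ell(1)=1$ but $\ell(-1)=\ell(2)=2$. Fortunately, the only place you invoke this identity is in the antisymmetry argument, where it is not needed: adding $\ell(g)+\ell(g^{-1}h)=\ell(h)$ and $\ell(h)+\ell(h^{-1}g)=\ell(g)$ gives $\ell(g^{-1}h)+\ell(h^{-1}g)=0$ directly, and since both terms are nonnegative integers each must vanish, forcing $g^{-1}h=e$. With that one-line repair (and the false identity deleted), your proof is correct.
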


\begin{example}
Taking $G=\GL_2(\FF_2)$ and $T$ the set of all reflections in $G$, the Hasse diagram for $\leq$ on $G$ is as follows:
\[
\xymatrix{
\left[\substack{11 \\ 10}\right]& &\left[\substack{01 \\ 11}\right]\\
\left[\substack{11 \\ 01}\right] \ar@{-}[u] \ar@{-}[urr] &\left[\substack{01 \\ 10}\right] \ar@{-}[ul] \ar@{-}[ur] &\left[\substack{10 \\ 11}\right] \ar@{-}[u] \ar@{-}[ull]\\
 &\left[\substack{10 \\ 01}\right]\ar@{-}[ul] \ar@{-}[u] \ar@{-}[ur] &
}
\]
Coincidentally, this is isomorphic to the absolute order on the symmetric group
$\symm_3$, since the irreducible reflection representation for 
$\symm_3$ over $\FF_2$ is isomorphic to $\GL_2(\FF_2)$.
\end{example}

\subsection{Conjugacy-closed generators}

When $(G,T)$ is a generated group in which $T$ is closed under
conjugation by elements of $G$, one has  
$
\ell(ghg^{-1})=\ell(h)
$
for all $g, h$ in $G$.  This implies, for example, that
$
\ell(gh)=\ell(g^{-1} \cdot gh \cdot g)=\ell(hg).
$

The next proposition asserts an interesting consequence for the order $\leq$ on $G$,
namely that it is \defn{locally self-dual}: each interval is isomorphic to its own opposite as a poset.

\begin{proposition}
\label{prop:self-duality}
Let $(G, T)$ be a generated group, with $T$ closed under $G$-conjugacy.
Then for any $x \leq z$, the bijection $G \rightarrow G$ defined by $y \mapsto x y^{-1} z$ restricts to a poset anti-automorphism $[x, z] \rightarrow [x,z]$.  
\end{proposition}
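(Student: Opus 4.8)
The plan is to verify directly that the map $\phi \colon y \mapsto x y^{-1} z$ sends the interval $[x,z]$ into itself, that it is order-reversing, and that it is an involution (hence a bijection and a poset anti-automorphism). Throughout I will freely use the fact that when $T$ is closed under $G$-conjugacy, $\ell$ is a class function, so $\ell(ab) = \ell(ba)$ for all $a, b \in G$, together with the characterization $a \leq b \iff \ell(a) + \ell(a^{-1} b) = \ell(b)$ from Definition~\ref{order-definition}(iii).

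First I would record that $\phi$ is an involution on $G$: applying it twice sends $y$ to $x(x y^{-1} z)^{-1} z = x \cdot z^{-1} y x^{-1} \cdot z$, which is \emph{not} obviously $y$. This is the point where one must be slightly careful. The cleanest fix is to observe that conjugation is invisible to $\leq$ in the following sense: since $T$ is conjugation-closed, left and right multiplication interact with $\ell$ symmetrically, and in fact the hypothesis $x \leq z$ together with $\ell(ab)=\ell(ba)$ gives $\ell(x) + \ell(x^{-1}z) = \ell(z) = \ell(z x^{-1}) + \ell(x) $, etc. Rather than chase $\phi^2$ blindly, I would instead show separately: (a) $\phi$ maps $[x,z]$ to $[x,z]$, and (b) $\phi$ reverses order on $[x,z]$; once (b) holds for $\phi$ and the same argument shows it for the analogously-defined $\psi \colon y \mapsto x y^{-1} z$ viewed as a map the other way (it is literally the same formula), the composite $\phi \circ \phi$ is an order-\emph{preserving} self-map of the finite-height graded poset $[x,z]$ fixing $x$ and $z$; combined with a rank count this forces $\phi^2 = \mathrm{id}$. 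Alternatively, and more slickly, I expect one can write $\phi(y) = x y^{-1} z = (x z^{-1}) (z y^{-1} z)$ and use that $z y^{-1} z = z (y^{-1}) z$ is $G$-conjugate-twisted to make $\phi^2$ collapse; I will pick whichever of these is shortest when writing the details.

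For step (a), suppose $x \leq y \leq z$. I want $x \leq \phi(y) \leq z$, i.e. $x \leq x y^{-1} z \leq z$. For the lower bound, $x \leq xy^{-1}z$ means $\ell(x) + \ell(x^{-1} \cdot xy^{-1}z) = \ell(xy^{-1}z)$, i.e. $\ell(x) + \ell(y^{-1}z) = \ell(xy^{-1}z)$. For the upper bound, $xy^{-1}z \leq z$ means $\ell(xy^{-1}z) + \ell((xy^{-1}z)^{-1} z) = \ell(z)$, i.e. $\ell(xy^{-1}z) + \ell(z^{-1}yx^{-1}z) = \ell(z)$, and by the class-function property $\ell(z^{-1}yx^{-1}z) = \ell(yx^{-1}) = \ell(x^{-1}y)$. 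So altogether I need $\ell(xy^{-1}z) = \ell(x) + \ell(y^{-1}z)$ and $\ell(xy^{-1}z) + \ell(x^{-1}y) = \ell(z)$. Both should follow from expanding the hypotheses $x \leq y$ (so $\ell(x) + \ell(x^{-1}y) = \ell(y)$) and $y \leq z$ (so $\ell(y) + \ell(y^{-1}z) = \ell(z)$) together with subadditivity \eqref{length-subadditivity} applied to $xy^{-1}z = (xy^{-1})(z)$ or $= (x)(y^{-1}z)$ and the class-function identities — the inequalities from subadditivity will be forced to equalities because the two estimates, added up, already total $\ell(z)$. This squeezing argument is the technical heart of (a).

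For step (b), suppose $x \leq y_1 \leq y_2 \leq z$; I must show $\phi(y_2) \leq \phi(y_1)$, i.e. $xy_2^{-1}z \leq xy_1^{-1}z$, which unwinds to $\ell(xy_2^{-1}z) + \ell(z^{-1}y_2 y_1^{-1} z) = \ell(xy_1^{-1}z)$, and by the class-function property the middle term is $\ell(y_2 y_1^{-1}) = \ell(y_1^{-1} y_2)$. Using the rank formulas from (a), $\ell(xy_i^{-1}z) = \ell(x) + \ell(y_i^{-1}z)$, so the claim reduces to $\ell(y_2^{-1}z) + \ell(y_1^{-1}y_2) = \ell(y_1^{-1}z)$, which is exactly the statement $y_1^{-1}y_2 \cdot \text{(prefix)} $ — concretely it says $y_1^{-1}y_2 \leq y_1^{-1}z$, equivalently (left-translating by $y_1$, which preserves $\leq$) that $y_2 \leq z$, which is our hypothesis. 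Wrapping up: $\phi$ is order-reversing on $[x,z]$, and running the same computation in reverse (or invoking the involutivity from the first paragraph) shows $\phi$ is a bijection $[x,z] \to [x,z]$, hence a poset anti-automorphism.

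I expect the main obstacle to be the bookkeeping in step (a): making sure that the subadditivity inequalities combine to pin down $\ell(xy^{-1}z)$ on the nose, which requires carefully tracking which products to split and where to insert the class-function swaps $\ell(ab) = \ell(ba)$. Everything else — involutivity and order-reversal — is then essentially formal once (a) provides the rank formula $\ell(\phi(y)) = \ell(x) + \ell(y^{-1}z)$.
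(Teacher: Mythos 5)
There is a genuine gap in how you establish bijectivity. Your steps (a) and (b) --- that $\phi(y)=xy^{-1}z$ carries $[x,z]$ into itself with $\ell(\phi(y))=\ell(x)+\ell(z)-\ell(y)$, and that it reverses order --- are in outline the same squeezing computations as the paper's, and they are fine. But your route to surjectivity fails: $\phi$ is \emph{not} an involution, since $\phi^2(y)=xz^{-1}yx^{-1}z$; for instance with $x=e$ this is conjugation by $z$, which is generally not the identity on $[e,z]$ (already in $\symm_3$ with $z$ a $3$-cycle it permutes the three transpositions below $z$). So no argument can ``force $\phi^2=\mathrm{id}$.'' The fallback you suggest --- an order-preserving self-map of a graded poset fixing $x,z$ plus a rank count --- also does not work: the proposition is stated for arbitrary generated groups, so the ranks of $[x,z]$ may be infinite sets (only the height is finite), and even in the finite case a rank-preserving order-preserving self-map fixing the endpoints need not be the identity. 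Note also that the inverse of $\phi$ is $w\mapsto zw^{-1}x$, not ``literally the same formula.'' What is missing, and what the paper supplies, is the \emph{converse} implication: if $xy^{-1}z\in[x,z]$ then $y\in[x,z]$ (the paper proves the equivalence of the two systems of length equations in both directions). With that, the global bijection $G\to G$ restricts to a bijection of $[x,z]$; alternatively you could run your step (a) symmetrically for the actual inverse map $w\mapsto zw^{-1}x$. Either repair is needed before ``anti-automorphism'' is earned.

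A smaller, easily repaired misstep: in step (b) you justify $\ell(y_1^{-1}y_2)+\ell(y_2^{-1}z)=\ell(y_1^{-1}z)$ by asserting that left translation by $y_1$ preserves $\leq$. It does not --- the prefix order is not translation-invariant (e.g.\ $e\leq t$ for $t\in T$, but $t\not\leq e$). The identity you need is nevertheless true for a direct reason: $y_1\leq y_2$, $y_2\leq z$, and (by transitivity) $y_1\leq z$ give $\ell(y_1^{-1}y_2)=\ell(y_2)-\ell(y_1)$, $\ell(y_2^{-1}z)=\ell(z)-\ell(y_2)$, and $\ell(y_1^{-1}z)=\ell(z)-\ell(y_1)$, which sum as required. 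This is in substance how the paper's order-reversal computation proceeds.
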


\begin{proof}
We first check the bijection restricts to $[x,z]$.
By definition, $y \in [x,z]$ if and only if
\begin{equation}\label{y1}
\left\{
\begin{aligned}
\ell(y) &= \ell(x) + \ell(x^{-1}y),\\
\ell(z) &= \ell(y) + \ell(y^{-1}z),
\end{aligned}
\right.
\end{equation}
while $xy^{-1}z \in [x,z]$ if and only if
\begin{equation}
\label{y2}
\left\{
\begin{aligned}
\ell(xy^{-1}z) &= \ell(x)+\ell(y^{-1}z), \\
\ell(z)       &= \ell(xy^{-1}z)+\ell(z^{-1}yx^{-1}z) = \ell(xy^{-1}z)+\ell(yx^{-1}),
\end{aligned}
\right.
\end{equation}
where the last equality in \eqref{y2} uses the conjugacy hypothesis.

To see that \eqref{y1} implies \eqref{y2}, note that, assuming \eqref{y1}, one has
\begin{align*}
\ell(z)&\le  \ell(yx^{-1}) + \ell(xy^{-1}z) \\
&\le \ell(x^{-1}y) + \ell(x)+\ell(y^{-1}z) \\
&= (\ell(y)-\ell(x)) + \ell(x)+(\ell(z)-\ell(y)) 
= \ell(z),
\end{align*}
using the conjugacy hypothesis to say $\ell(yx^{-1})=\ell(x^{-1}y)$.
The fact that one has equality at each inequality above implies \eqref{y2}. Conversely, assuming \eqref{y2}, one has
\begin{align*}
\ell(z) &= \ell(x)+\ell(y^{-1}z)+\ell(yx^{-1})\\
&\ge \ell(x)+(\ell(z)-\ell(y))+(\ell(y)-\ell(x))
= \ell(z)
\end{align*}
with equality at the inequality implying \eqref{y1}.

It remains to show the restricted bijection $[x,z] \rightarrow [x,z]$ 
reverses order.  Assume $y_1\le y_2$ in $[x,z]$.  
The preceding calculations show that $\ell(xy_i^{-1}z) = \ell(x) - \ell(y_i) + \ell(z)$.  Thus
\begin{align*}
\ell(xy_1^{-1}z) & = \ell(x) - \ell(y_1) + \ell(z) \\
                 & = (\ell(x) - \ell(y_2) + \ell(z)) + (\ell(y_2) - \ell(y_1)) \\
                 & = \ell(xy_2^{-1}z) + \ell(y_1^{-1}y_2) \\
                 & = \ell(xy_2^{-1}z) + \ell(z^{-1} y_2 x^{-1} x y_1^{-1} z),
\end{align*}
using the conjugacy hypothesis in this last equality.  Hence $xy_2^{-1}z \leq xy_1^{-1}z$, as desired.
\end{proof}

The following is another 
important feature of $G$-conjugacy-closed generators $T$.
Given $g,h$ in $G$, let $g^h:=h^{-1}gh$ and ${}^hg:=hgh^{-1}$,
and note that 
\begin{equation}
\label{basis-for-Hurwitz}
g \cdot h = h \cdot g^h = {}^gh \cdot g.
\end{equation}

\begin{definition}[Hurwitz operators]
Given a generated group $(G,T)$ with $T$ closed under $G$-conjugacy
and any $T$-word 
\[
\ttt:=(t_1,\ldots,t_{i-1},t_i,t_{i+1},t_{i+2},\ldots, t_m)
\]
for $g=t_1 \cdots t_m$,
for $1 \leq i \leq m-1$ 
define the \defn{Hurwitz operator} $\sigma_i$ and its inverse $\sigma_i^{-1}$ by
\begin{align*} 
\sigma_i(\ttt)       &:=(t_1,\ldots,t_{i-1}, t_{i+1}, t_i^{t_{i+1}}, t_{i+2},\ldots, t_m), \\
\sigma_i^{-1}(\ttt) &:=(t_1,\ldots,t_{i-1}, {}^{t_i}t_{i+1},t_i, t_{i+2},\ldots, t_m).
\end{align*}
Equation \eqref{basis-for-Hurwitz} shows that $\sigma_i(\ttt)$ and $\sigma_i^{-1}(\ttt)$ are both $T$-words for $g$.
\end{definition}

\begin{remark}
Although it is not needed in the sequel, note that $\{\sigma_1,\ldots,\sigma_{m-1}\}$ satisfy the {\it braid relations} 
$\sigma_i \sigma_{i+1} \sigma_i = \sigma_{i+1} \sigma_i \sigma_{i+1}$
and 
$\sigma_i \sigma_j = \sigma_j \sigma_i
$
for $|i-j|\geq 2$, 
defining an action of the {\it braid group} $B_m$ on $m$ strands on
the set of all length-$m$ factorizations of $g$.
\end{remark}

Note that the operator $\sigma_i$ (resp.\ $\sigma_i^{-1}$) can be used to swap any letter in a word for $g$ one position to the left (resp.\ right) \emph{unchanged} at the expense of conjugating the letter with which it swapped; this creates a new word for $g$ of the same length.  Armstrong calls this the \emph{shifting property} \cite[Lem.~2.5.1]{Armstrong}.  It has the following immediate consequence.

\begin{proposition}[Subword property]
\label{prop:subword}
Let $(G, T)$ be a generated group with $T$ closed under $G$-conjugacy.
Then $g \leq h$ if and only there exists a $T$-reduced word
\[
\ttt:=(t_1,t_2,\ldots,t_{\ell(h)})
\]
for $h$ containing as a subword (not necessarily a prefix, nor contiguous) a word
\[
\hat{\ttt}=(t_{i_1},t_{i_2},\ldots,t_{i_{\ell(g)}}) \text{ with }
1 \leq i_1 < \cdots < i_{\ell(g)} \leq \ell(h)
\] 
that is $T$-reduced for $g$.
\end{proposition}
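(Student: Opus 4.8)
The plan is to prove both implications using the shifting property (Hurwitz operators) established just above.

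For the backward direction, suppose a $T$-reduced word $\ttt = (t_1, \dots, t_{\ell(h)})$ for $h$ contains a $T$-reduced subword $\hat{\ttt} = (t_{i_1}, \dots, t_{i_{\ell(g)}})$ for $g$. I would repeatedly apply operators $\sigma_j^{-1}$ to move each non-selected letter rightward, past all the selected letters to its right, until all the selected letters $t_{i_1}, \dots, t_{i_{\ell(g)}}$ occupy the first $\ell(g)$ positions of a new word $\ttt'$ for $h$ of the same length. Crucially, the Hurwitz operators only conjugate letters; since $T$ is closed under $G$-conjugacy, the moved letters stay in $T$, and more importantly the letters that started in positions $i_1 < \dots < i_{\ell(g)}$ are never conjugated (they only ever get swapped leftward past letters moving right, i.e. they sit still while others hop over them), so the prefix of $\ttt'$ of length $\ell(g)$ is literally $\hat{\ttt}$, which spells $g$ and is $T$-reduced. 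Hence $\ttt'$ is a $T$-reduced word for $h$ (it has length $\ell(h)$) whose length-$\ell(g)$ prefix is $T$-reduced for $g$, which is exactly condition (i) of Definition~\ref{order-definition}, so $g \le h$.

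For the forward direction, suppose $g \le h$. By condition (i), any $T$-reduced word $(t_1, \dots, t_{\ell(g)})$ for $g$ extends to a $T$-reduced word $(t_1, \dots, t_{\ell(g)}, t_{\ell(g)+1}, \dots, t_{\ell(h)})$ for $h$; this word contains its own length-$\ell(g)$ prefix as a (contiguous, initial) $T$-reduced subword for $g$, so the subword condition holds trivially with $i_j = j$.

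The routine bookkeeping — checking that a bubble-sort-style sequence of $\sigma_j^{-1}$'s indeed collects the chosen letters into a prefix while leaving them individually unconjugated — is the only real content, and it is exactly the iterated ``shifting property'' of Armstrong \cite[Lem.~2.5.1]{Armstrong}; I expect the main (mild) obstacle is simply being careful that it is the \emph{unchosen} letters that get conjugated and moved, so that the chosen subword survives verbatim and spells $g$.
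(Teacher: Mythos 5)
Your forward direction is fine and identical to the paper's. The backward direction, however, contains a directional error in how the Hurwitz operators act. With the paper's definitions, $\sigma_i^{-1}(\ttt)=(t_1,\ldots,t_{i-1},{}^{t_i}t_{i+1},t_i,t_{i+2},\ldots,t_m)$: the letter in position $i$ moves one step to the \emph{right} unchanged, while the letter in position $i+1$ --- the one being hopped over --- is the one that gets conjugated. So in your scheme, where the \emph{unselected} letters are bubbled rightward unchanged, it is precisely the selected letters $t_{i_1},\ldots,t_{i_{\ell(g)}}$ that get conjugated, contrary to your parenthetical claim that they ``sit still while others hop over them.'' Consequently the length-$\ell(g)$ prefix of your final word $\ttt'$ consists of conjugates ${}^{w_j}t_{i_j}$ (each $w_j$ a product of unselected letters originally to the left of position $i_j$), and its product need not be $g$. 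Concretely, for $\ttt=(s_1,u,s_2)$ with selected subword $(s_1,s_2)$ spelling $g=s_1s_2$, applying $\sigma_2^{-1}$ gives $(s_1,{}^{u}s_2,u)$, whose length-two prefix multiplies to $s_1us_2u^{-1}$, in general different from $g$. So the key step ``the prefix of $\ttt'$ is literally $\hat{\ttt}$'' fails as stated.

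The repair is exactly what the paper does: run the bubbling in the opposite direction, applying the operators $\sigma_i$ (not $\sigma_i^{-1}$) so as to move each \emph{selected} letter leftward unchanged --- first $t_{i_1}$ to position $1$, then $t_{i_2}$ to position $2$, and so on. Then the selected letters survive verbatim, the unselected letters absorb all conjugations (remaining in $T$ by conjugacy-closure), and one obtains a word for $h$ of length $\ell(h)$, hence $T$-reduced, whose prefix is literally $\hat{\ttt}$ and spells $g$; this yields $g\leq h$ via Definition~\ref{order-definition} (most directly through condition (ii) or (iii); condition (i), which you cite, is a universal statement over all reduced words for $g$, so it is cleaner to invoke one of the equivalent conditions).
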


\begin{proof}
The ``only if" direction is direct from condition (i) in Definition~\ref{order-definition} of $g \leq h$.  For the ``if" direction, given the $T$-reduced word $\ttt$ for $h$
containing the $T$-reduced subword $\hat{\ttt}$ for $g$, one obtains
another $T$-reduced word for $h$ having $\hat{\ttt}$ as a prefix by repeatedly 
using Hurwitz operators
to first move the letter $t_{i_1}$ leftward (unchanged)
 to the first position, then moving $t_{i_2}$ leftward (unchanged) to the second position, etc. 
\end{proof}

\subsection{Fixed space codimension and reflection groups}

Suppose that the group $G$ is given via a faithful representation,
that is, $G$ is a subgroup of $\GL_n(\FF)=\GL(V)$ where 
$V=\FF^n$ for some field $\FF$.  This gives rise to another
subadditive function $G \rightarrow \NN$, namely the fixed space codimension
\[
g \mapsto \codim(V^g)=n-\dim(V^g).
\]

\begin{proposition}
\label{codim-subadditivity-prop}
One has the subadditivity
\begin{equation}
\label{codim-subadditivity}
\codim(V^{gh})
\leq
\codim(V^g) + \codim(V^h)
\end{equation}
with equality occurring if and only if both of the following hold:
\begin{align}
\label{spanning-condition}
V^g+V^h&= V   \qquad \textrm{ and }\\
\label{intersection-condition}
V^g \cap V^h&= V^{gh}.
\end{align}

\end{proposition}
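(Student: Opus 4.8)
The plan is to reduce everything to elementary linear algebra about the fixed spaces $V^g = \ker(g-1)$. First I would establish the subadditivity inequality \eqref{codim-subadditivity}. The key observation is that if $v \in V^g \cap V^h$ then $ghv = g(hv) = gv = v$, so $V^g \cap V^h \subseteq V^{gh}$; equivalently, writing things in terms of codimensions and using that $\codim(V^g \cap V^h) \leq \codim(V^g) + \codim(V^h)$ (a standard fact, with equality iff $V^g + V^h = V$), I get
\[
\codim(V^{gh}) \leq \codim(V^g \cap V^h) \leq \codim(V^g) + \codim(V^h).
\]
This already shows \eqref{codim-subadditivity} and shows that equality in \eqref{codim-subadditivity} forces equality in \emph{both} of the two inequalities above, which are exactly conditions \eqref{intersection-condition} (the first inequality is an equality iff $V^g \cap V^h = V^{gh}$) and \eqref{spanning-condition} (the second is an equality iff $V^g + V^h = V$). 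So the "only if" direction is immediate once the two intermediate facts are in place.

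For the "if" direction I would assume \eqref{spanning-condition} and \eqref{intersection-condition} and simply run the chain of equalities backwards: \eqref{spanning-condition} gives $\codim(V^g \cap V^h) = \codim(V^g) + \codim(V^h)$ by the standard dimension formula $\dim(V^g \cap V^h) + \dim(V^g + V^h) = \dim V^g + \dim V^h$, and \eqref{intersection-condition} gives $\codim(V^{gh}) = \codim(V^g \cap V^h)$; combining yields equality in \eqref{codim-subadditivity}. The only genuine content beyond bookkeeping is the inclusion $V^g \cap V^h \subseteq V^{gh}$ noted above, which is a one-line computation, and the standard linear-algebra identity relating $\dim(U_1 \cap U_2)$, $\dim(U_1 + U_2)$, and $\dim U_1 + \dim U_2$.

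I do not expect a serious obstacle here; the proposition is essentially a packaging of the dimension formula together with the trivial inclusion on fixed spaces. The one point requiring a little care is making sure the "iff" is bookkept correctly: equality in the composite inequality $\codim(V^{gh}) \leq \codim(V^g) + \codim(V^h)$ is equivalent to equality holding \emph{simultaneously} in each of the two steps, precisely because each step is an inequality in the same direction and there are no cancellations. I would state this explicitly so the reader sees that neither \eqref{spanning-condition} nor \eqref{intersection-condition} alone suffices, but the two together are exactly what is needed. If one wanted, one could also remark that this proposition, unlike the later coincidence $\ell_T = \codim$, does not use that $G = \GL(V)$ — it holds for any subgroup $G \leq \GL(V)$ — but that is a side comment rather than part of the proof.
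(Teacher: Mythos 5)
Your proposal is correct and follows essentially the same route as the paper: both insert the intermediate quantity $\codim(V^g \cap V^h)$, use the dimension formula to show $\codim(V^g)+\codim(V^h) \geq \codim(V^g\cap V^h)$ with equality exactly when $V^g+V^h=V$, and use the trivial inclusion $V^g\cap V^h \subseteq V^{gh}$ for the second step with equality exactly when $V^g\cap V^h=V^{gh}$. The bookkeeping of the two same-direction inequalities is handled identically, so there is nothing to add.
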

\begin{proof}
One has
\[
\dim(V^g)+\dim(V^h)
 =    \dim(V^g+V^h)+\dim(V^g \cap V^h) 
 \leq n + \dim(V^g \cap V^h)
\]
and hence
\[
\codim(V^g)+\codim(V^h) 
\geq n-\dim(V^g \cap V^h) = \codim(V^g \cap V^h),
\]
with equality if and only if 
\eqref{spanning-condition} holds.  
Also,
$V^g \cap V^h \subseteq V^{gh}$
and so
\[
\codim(V^g \cap V^h) \geq \codim(V^{gh}),
\]
with equality if and only if
\eqref{intersection-condition} holds.
Hence
\[
\codim(V^g)+\codim(V^h) \geq \codim(V^g \cap V^h) \geq \codim(V^{gh}),
\]
with equality if and only if both conditions hold.
\end{proof}

It is natural to compare $\codim(V^g)$ with
the length function $\ell(g)=\ell_T(g)$ from before.

\begin{definition}[Absolute length, absolute order]
When a subgroup $G$ of $\GL(V)$ has a subset 
$T$ generating $G$ as a monoid, so that
$(G,T)$ is a generated group, say
that  $\ell(g)=\ell_T(g)$ is an {\it absolute length function} if 
\begin{equation}
\label{length-equals-codim}
\codim(V^g) = \ell(g) \text{ for all }g\text{ in }G.
\end{equation}
In this situation, call the prefix order $\leq$ for $(G,T)$ of
Definition~\ref{order-definition} the {\it absolute order} on $G$.
\end{definition}

\begin{proposition}
\label{length-bounds-codim}
Let $(G,T)$ be a generated group with $G$ a subgroup of $\GL(V)$.
\begin{compactenum}[(i)]
\item
If $\ell(g)$ is an absolute length function, 
then $G$ must be a reflection group and $T$ must be the set 
of all reflections in $G$.
\item
Conversely, if $G$ is a reflection group and $T$ its set of all reflections,
one at least has 
\[
\codim(V^g) \leq \ell(g) \text{ for all } g \text{ in } G.
\]
\end{compactenum}
\end{proposition}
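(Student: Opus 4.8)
The plan is to prove both parts by combining the subadditivity statements already in hand: inequality~\eqref{length-subadditivity} for $\ell_T$, and inequality~\eqref{codim-subadditivity} for $\codim(V^{-})$. The key elementary observation is that a single reflection $t \in T$ has $\codim(V^t) = 1$ by definition, so $\codim$ and $\ell$ agree on all of $T$; everything then propagates by induction on length.

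For part (ii), suppose $G$ is a reflection group with $T$ its full set of reflections. Given $g \in G$ with $\ell(g) = \ell$, write a $T$-reduced word $g = t_1 t_2 \cdots t_\ell$. Applying \eqref{codim-subadditivity} repeatedly (formally, induction on $\ell$) gives
\[
\codim(V^g) \;\leq\; \sum_{i=1}^{\ell} \codim(V^{t_i}) \;=\; \ell \;=\; \ell(g),
\]
since each $\codim(V^{t_i}) = 1$. This is exactly the desired inequality, and no hypothesis beyond ``$T$ consists of reflections'' is used.

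For part (i), assume $\ell_T$ is an absolute length function, i.e.\ $\codim(V^g) = \ell_T(g)$ for all $g \in G$. First, every generator $t \in T$ satisfies $\codim(V^t) = \ell_T(t)$; but $\ell_T(t) \le 1$ since $t$ is itself a one-letter $T$-word, and $\ell_T(t) \neq 0$ since $t \neq e$ would force $\codim(V^t) = 0$, i.e.\ $t = e$, which is not a generator in a minimal-length sense (more carefully: if $t = e$ then it can be deleted from every word, so we may assume $e \notin T$, or simply note $\codim(V^e)=0=\ell_T(e)$ is consistent and $e$ contributes nothing). Hence $\codim(V^t) = 1$ for each $t \in T$, so $T$ consists of reflections; since $T$ generates $G$, this makes $G$ a reflection group. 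It remains to see $T$ is \emph{all} reflections of $G$: if $t' \in G$ is any reflection, then $\codim(V^{t'}) = 1$, so by the absolute-length hypothesis $\ell_T(t') = 1$, meaning $t'$ is a single letter of $T$, i.e.\ $t' \in T$. Thus $T$ is exactly the set of reflections in $G$.

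I do not expect a serious obstacle here; the only mild subtlety is bookkeeping around whether $e \in T$ is allowed, which is harmless and can be dispatched with a sentence. The substantive content is entirely carried by Proposition~\ref{codim-subadditivity-prop} together with the base case $\codim(V^t)=1$ for reflections, so the write-up should be short.
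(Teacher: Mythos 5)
Your proposal is correct and follows essentially the same route as the paper: part (ii) by writing a $T$-reduced word and iterating the subadditivity \eqref{codim-subadditivity} with $\codim(V^t)=1$ for reflections, and part (i) by noting that under the hypothesis $\ell_T(g)=\codim(V^g)$ the conditions $\codim(V^g)=1$ (being a reflection) and $\ell_T(g)=1$ (lying in $T$) coincide. Your extra remark about the harmless possibility $e\in T$ is just bookkeeping that the paper passes over silently.
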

\begin{proof}
Assertion (i) follows as $\codim(V^g)=1$ if and only
if $g$ is a reflection, and $\ell_T(g)=1$ 
if and only if $g$ lies in $T$.
For (ii), write $g=t_1 t_2 \cdots t_{\ell(g)}$ and use 
the subadditivity \eqref{codim-subadditivity}.
\end{proof}

\begin{example}
Carter showed \cite[Lem.~2]{Carter} that 
one has equality in
\eqref{length-equals-codim}
for any finite real reflection group $G \subset \GL_n(\RR)$.
\end{example}

\begin{example}
On the other hand, motivated by considerations from the theory of 
deformation of skew group rings,
Foster-Greenwood \cite{FosterGreenwood} analyzed the situation for finite \emph{complex} reflection groups $G \subset \GL_n(\CC)$ that cannot be realized as real reflection groups, and showed that
in this case it is relatively rare to have equality in 
\eqref{length-equals-codim}.

For example, the complex reflection group $G = G(4, 2, 2)$ is the set of
monomial matrices in $\CC^{2 \times 2}$ whose two 
nonzero entries lie in $\{\pm 1, \pm i\}$ and have product $\pm 1$.
It has reflections 
\[
T=
\left\{
\left[ \begin{matrix} 0 &  1 \\  1 &  0\end{matrix} \right],
\left[ \begin{matrix} 0 &  i \\ -i &  0\end{matrix} \right], 
\left[ \begin{matrix} 0 & -1 \\ -1 &  0\end{matrix} \right], 
\left[ \begin{matrix} 0 & -i \\  i &  0\end{matrix} \right], 
\left[ \begin{matrix} 1 &  0 \\  0 & -1\end{matrix} \right], 
\left[ \begin{matrix}-1 &  0 \\  0 &  1\end{matrix} \right]
\right\}
\]
and different distributions for the functions $\codim(V^g)$ and $\ell(g)$:
\[
\sum_{g \in G} t^{\codim(V^g)} = 1 + 6t + 9t^2 \qquad \textrm{ and } \qquad
\sum_{g \in G} t^{\ell_T(g)} = 1 + 6t + 7t^2 + 2t^3 .
\]
The two scalar matrices
$
\pm
\left[
\begin{smallmatrix}
i & 0\\
0 &i 
\end{smallmatrix}
\right]
$
have reflection length $3$;  neither is a product of two reflections.
\end{example}

\begin{remark}
\label{alternate-characterization-remark}
Note that whenever $G$ is a reflection group with an absolute length function,
so $\ell(g)=\codim(V^g)$, the absolute order 
relation $\leq$ acquires yet another characterization 
via Proposition~\ref{codim-subadditivity-prop}
(in addition to those in Definition~\ref{order-definition} and 
Proposition~\ref{prop:subword}).
Specifically, $g \leq h$ if and only if one has both equalities
\begin{align}
\label{codim-order-spanning-equality}
V^g+V^{g^{-1}h}&= V   \qquad \textrm{ and }     \\
\label{codim-order-intersection-equality}
V^g \cap V^{g^{-1}h}&= V^{h}.
\end{align}
\end{remark}

\begin{example}
Brady and Watt \cite{BradyWatt} considered the order
$\leq$ defined via Remark~\ref{alternate-characterization-remark}
on real orthogonal groups and complex unitary groups acting
on finite-dimensional spaces.
They showed \cite[Cor.~5]{BradyWatt} that such groups have an 
absolute length function when considered as reflection groups generated by their subset of reflections.
\end{example}

We come to our first main result, showing that the full general linear group $G=\GL(V)$ always has an absolute length function.

\begin{proposition}
\label{length=codim}
Let $G=\GL_n(\FF)=\GL(V)$ with $V=\FF^n$ for some field $\FF$,
and consider the generated group $(G,T)$ where $T$ is the set of all
reflections in $G$.
Then every $g$ in $G$ has
$$
\ell(g)=\codim(V^g).
$$
\end{proposition}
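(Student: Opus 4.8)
The plan is to combine the inequality $\codim(V^g)\le \ell(g)$ already supplied by Proposition~\ref{length-bounds-codim}(ii) with a reverse inequality $\ell(g)\le\codim(V^g)$, which I would prove by induction on $c:=\codim(V^g)$. When $c=0$ one has $g=e$, a product of zero reflections. For the inductive step the goal is to ``peel off'' one reflection: to produce a reflection $t$ such that $g':=t^{-1}g$ satisfies $\codim(V^{g'})=c-1$. Then $g'$ is a product of $c-1$ reflections by the inductive hypothesis, so $g=tg'$ is a product of $c$ reflections, giving $\ell(g)\le c$.

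To build $t$ I would fix a vector $v$ with $gv\neq v$ (possible since $c\ge 1$), put $w:=gv-v\neq 0$, and seek $t$ of the form $x\mapsto x+\phi(x)w$ for a suitable linear functional $\phi$. Such a map is a reflection, with $V^t=\ker\phi$, precisely when $\phi\neq 0$ and $1+\phi(w)\neq 0$ (the determinant being $1+\phi(w)$). I would impose that $\phi$ vanish on $V^g$ and satisfy $\phi(v)=1$; these are consistent because $v\notin V^g$, and they force $tv=v+w=gv$ and $V^g\subseteq V^t$. The only remaining constraint, $1+\phi(w)\neq 0$, calls for a short case distinction: if $w\notin V^g+\FF v$ one is free to take $\phi(w)=0$; if instead $w=w_0+\beta v$ with $w_0\in V^g$, then $\phi(w)=\beta$ is forced, but $\beta\neq -1$, since $\beta=-1$ would give $gv=w_0=gw_0$, contradicting injectivity of $g$.

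With such a $t$ in hand, for $x\in V^g\subseteq V^t$ one gets $g'x=t^{-1}gx=t^{-1}x=x$, so $V^g\subseteq V^{g'}$; moreover $g'v=t^{-1}(gv)=t^{-1}(tv)=v$ while $v\notin V^g$, so $\dim V^{g'}\ge\dim V^g+1$, i.e. $\codim(V^{g'})\le c-1$. The matching lower bound $\codim(V^{g'})\ge c-1$ follows from $g=tg'$ together with the subadditivity in Proposition~\ref{codim-subadditivity-prop}, so $\codim(V^{g'})=c-1$ and the induction closes. The one delicate point is the construction of $t$: one must choose $\phi$ so that $t$ is genuinely invertible, and that is exactly where invertibility of $g$ enters (through the impossibility of $\beta=-1$); everything else is routine linear algebra, the only care needed being to keep the case distinction valid even over $\FF_2$, where $\phi$ cannot be perturbed. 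As a byproduct this argument reproves that $\GL(V)$ is generated by its reflections.
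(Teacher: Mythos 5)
Your proof is correct. It shares the paper's overall skeleton --- the inequality $\codim(V^g)\le\ell(g)$ from Proposition~\ref{length-bounds-codim}(ii) plus an induction on $\codim(V^g)$ in which one peels off a single reflection that strictly enlarges the fixed space --- but the construction of that reflection is genuinely different. The paper works in coordinates: it chooses a basis adapted to $V^g$, writes $g$ in block form $\left[\begin{smallmatrix} A & 0\\ B & \one\end{smallmatrix}\right]$, performs a change of basis to arrange $e_m^\top A^{-1}e_m\neq 0$, and defines $t$ on basis vectors so that $gt$ fixes $e_m$; the nonvanishing of $e_m^\top A^{-1}e_m$ is exactly what makes $\det t\neq 0$. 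You instead build $t$ coordinate-freely as the rank-one perturbation $x\mapsto x+\phi(x)w$ with $w=gv-v$, so that $t$ agrees with $g$ at $v$ and $t^{-1}g$ fixes $V^g\oplus\FF v$ pointwise; here invertibility of $t$ (i.e.\ $1+\phi(w)\neq 0$, by the matrix determinant lemma) is secured by your case analysis on whether $w\in V^g+\FF v$, with the bad case $\beta=-1$ ruled out by injectivity of $g$. Your case distinction thus plays precisely the role of the paper's basis-change trick, and both arguments are equally elementary; your version has the mild advantages of being basis-free and of making transparent exactly where invertibility of $g$ is used (and, as you note, of reproving that $\GL(V)$ is generated by reflections). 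One cosmetic remark: the exact equality $\codim(V^{g'})=c-1$ and hence the subadditivity bound from Proposition~\ref{codim-subadditivity-prop} are not needed --- the inequality $\codim(V^{g'})\le c-1$ already closes the induction, since $\ell(g)\le 1+\ell(g')\le c$.
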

\begin{proof}
By Proposition~\ref{length-bounds-codim},
it suffices to show that $\ell(g) \leq \codim(V^g)$.
This follows by induction on $\codim(V^g)$ if
one can show that for any $g$ in $G$ other than the identity,
 there exists some $t$ in $T$ having $V^{gt} \supsetneq V^g$.
We construct such a $t$ explicitly.
Choose an ordered basis $e_1,\ldots,e_n$ for $V=W \oplus W'$ so
that $W':=V^g$ is spanned by $\{e_{m+1},e_{m+2},\ldots,e_n\}$.  In this basis for $V$, we have
\[
g=
\begin{bmatrix}
A & 0\\
B & \one_{n-m}
\end{bmatrix}
\]
where $A$ in $\GL_m(\FF)$ expresses the composite
$
W \overset{i_W}{\hookrightarrow} V \overset{g}{\to} V \overset{\pi_W}{\twoheadrightarrow} W
$
 in the basis $e_1,\ldots,e_m$. 

We claim that by making a change of basis on $W$, one may assume that 
$e_m^\top A^{-1} e_m \neq 0$.  To see this claim, fix any matrix $Q$ in $\GL_m(\FF)$
(such as $Q=A^{-1}$) having $e_m^\top Q e_m =0$.  Since $Q e_m \neq \mathbf{0}$,
there must exist some $j$ in  $\{1,2,\ldots,m-1\}$ for which $e_j^\top Q e_m \neq 0$.  Thus one may define an invertible change of basis $P$ by 
$P(e_i) = e_i$ for $i \neq j$ and $P(e_j)=e_j+e_m$.  Consequently,
$P^{-1}(e_m)=e_m$ and $P^\top e_m=e_j+e_m$, so one can calculate that $PQP^{-1}$ satisfies
\[
e_m^\top PQP^{-1} e_m 
= (P^\top e_m)^\top Q e_m 
= (e_j+e_m)^\top Q e_m 
=e_j^\top Qe_m + e_m^\top Qe_m 
=e_j^\top Qe_m \neq 0.
\]

Once one has $e_m^\top A^{-1}e_m \neq 0$,
define the desired reflection $t$ to fix the hyperplane spanned by
$\{e_1,\ldots,e_n\} \setminus \{e_m\}$ and send $e_m$ to  
$
A^{-1} e_m \oplus
(-B A^{-1} e_m)
$
in $W \oplus W'=V$.
One can check that $\det(t)=e_m^\top A^{-1}e_m \neq 0$, so that $t$ does define a reflection in $\GL(V)$.  Furthermore, both $g$ and $t$ fix $W'=V^g$ pointwise, so $gt$ also fixes $W'$ pointwise.  However, the following shows that $gt$ additionally fixes $e_m$, and hence
 $V^{gt} \supsetneq W' =V^g$, as desired:
\[
gt(e_m)=
g
\begin{bmatrix}
A^{-1} e_m \\
-B A^{-1} e_m
\end{bmatrix}
=
\begin{bmatrix}
A & 0\\
B & \one_{n-m}
\end{bmatrix}
\cdot
\begin{bmatrix}
A^{-1} e_m \\
-B A^{-1} e_m
\end{bmatrix}
=
\begin{bmatrix}
A \cdot A^{-1} e_m \\
B \cdot A^{-1}e_m - BA^{-1}e_m
\end{bmatrix}
=e_m.
\qedhere
\]
\end{proof}

\subsection{Surjection onto subspace lattices}

Consider the lattice $L(V)$ of all $\FF$-subspaces of $V=\FF^n$
ordered by \emph{reverse inclusion}.\footnote{This matches, e.g., the convention common in the theory of geometric lattices.}
For any subgroup $G$ of $\GL(V)$, one has a map 
\begin{equation}
\label{fixed-space-map}
\begin{array}{rcl}
G &\overset{\pi}{\longrightarrow}& L(V)\\
g &\longmapsto& V^g
\end{array}.
\end{equation}
If $G$ is a reflection group with an absolute length,
then Remark~\ref{alternate-characterization-remark}
shows that this map $\pi$ is order-preserving for the absolute order.  
Orlik and Solomon \cite[Lem.~4.4]{OrlikSolomon} showed that if $G$ is a
finite complex reflection group in $\GL_n(\CC)=\GL(V)$, 
then $\pi$ is a surjection onto the subposet of $L(V)$ consisting of all
subspaces that are intersections of reflection hyperplanes.
Hence for finite real reflection groups, which have an absolute
length, $\pi$ is an order-preserving surjection onto this subposet.
The next observation shows that the same holds for
the full general linear groups.  The proof is an easy exercise, left
to the reader.

\begin{proposition}
\label{surjection onto lattice of subspaces}
For $G=\GL(V)$ itself, 
the map \eqref{fixed-space-map} is an order-preserving surjection.
\end{proposition}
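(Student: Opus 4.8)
The plan is to prove two things: that $\pi$ is order-preserving, and that it is surjective. The order-preserving part is immediate from Remark~\ref{alternate-characterization-remark}: since $\GL(V)$ has an absolute length by Proposition~\ref{length=codim}, if $g \leq h$ in the absolute order then \eqref{codim-order-intersection-equality} gives $V^g \cap V^{g^{-1}h} = V^h$, so in particular $V^h \subseteq V^g$, which is exactly $V^g \leq V^h$ in $L(V)$ under the reverse-inclusion order. (One could also simply note $g \leq h$ forces $\ell(h) = \ell(g) + \ell(g^{-1}h)$, and then subadditivity of $\codim$ applied to $h = g \cdot (g^{-1}h)$ is an equality, so Proposition~\ref{codim-subadditivity-prop} again yields $V^g \cap V^{g^{-1}h} = V^h$.)

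For surjectivity, I need to show that every subspace $U \subseteq V$ arises as $V^g$ for some $g \in \GL(V)$. This is genuinely easier than the complex reflection group case, because here $T$ is the set of \emph{all} reflections, so every hyperplane is a reflection hyperplane and every subspace is an intersection of reflection hyperplanes. Concretely, given $U$ of codimension $m$, pick a basis $e_1, \ldots, e_n$ of $V$ with $U = \operatorname{span}\{e_{m+1}, \ldots, e_n\}$, and let $g$ act as (say) a fixed-point-free linear map on $\operatorname{span}\{e_1, \ldots, e_m\}$ — for instance scaling by a scalar $\lambda \neq 1$ when $|\FF| > 2$, or more robustly a companion matrix of an irreducible polynomial of degree $m$ other than $t-1$ — and as the identity on $U$. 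Then $V^g = U$ exactly. Even over $\FF_2$ this works: the map on $\operatorname{span}\{e_1,\ldots,e_m\}$ just needs to have no nonzero fixed vector, and a single Jordan-type block or an irreducible companion matrix supplies this.

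Assembling these: the map $\pi$ is well-defined, order-preserving by the first paragraph, and surjective by the construction in the second, which completes the proof.

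The main obstacle — such as it is — is purely bookkeeping: making sure the auxiliary linear map chosen on the complement of $U$ has \emph{no} nonzero fixed vectors in every field, including $\FF_2$, so that $V^g$ is exactly $U$ and not larger. Using the companion matrix of an irreducible polynomial $p(t) \neq t - 1$ of the right degree (which exists for every degree over every field) handles this uniformly, so there is no real difficulty; the authors are right to call it an exercise.
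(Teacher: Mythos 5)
Your order-preserving argument is fine (and is essentially already recorded in the paper in the paragraph before the proposition, via Proposition~\ref{length=codim} and Remark~\ref{alternate-characterization-remark}); since the paper leaves the proposition as an exercise, what needs checking is your surjectivity construction, and there it has a genuine, if small, gap exactly where you claimed to have covered it. Your recipe takes $g$ block-diagonal: the identity on $U$ and a fixed-point-free invertible map on an $m$-dimensional complement, $m=\codim U$. For $m=1$ over $\FF_2$ no such block exists: the only invertible $1\times 1$ matrix is the identity, and the only monic degree-one polynomials are $t$ (whose companion matrix is not invertible) and $t-1$ (excluded). So your construction produces no element whose fixed space is a prescribed hyperplane of $\FF_2^n$ --- precisely the case of reflections over $\FF_2$, which are transvections and are \emph{not} of your block-diagonal semisimple form. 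The statement is still true there, but only because one may take $g$ unipotent with $(g-1)$ mapping into $U$, e.g.\ $g(e_1)=e_1+u$ for some $0\neq u\in U$ and $g|_U=\mathrm{id}$, which has $\ker(g-1)=U$ exactly. Separately, the parenthetical claim that an irreducible polynomial of every degree exists over every field is false (over $\CC$, or any algebraically closed field, there are none of degree $\geq 2$); this is harmless only because your scalar fallback $\lambda\neq 0,1$ handles every field with more than two elements, so the ``more robust'' option is in fact the fragile one.

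A uniform repair that avoids all case analysis for $0\neq U\subsetneq V$: choose a complement $W=\Span\{e_1,\ldots,e_m\}$ of $U$ and $0\neq u\in U$, and let $g$ fix $U$ pointwise, $g(e_1)=e_1+u$, $g(e_i)=e_i+e_{i-1}$ for $2\leq i\leq m$. Then $g$ is unipotent, hence invertible, and $(g-1)$ has kernel exactly $U$, over any field. For $U=V$ take $g=e$, and for $U=0$ one needs a fixed-point-free element of $\GL(V)$ (a scalar $\lambda\neq 0,1$ if $|\FF|>2$, or the companion matrix of an irreducible polynomial of degree $n$ over a finite field); note this last case genuinely fails only in the degenerate situation $n=1$, $\FF=\FF_2$, where $\GL_1(\FF_2)$ is trivial. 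With that patch your argument is complete and is the expected solution to the exercise.
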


\begin{remark}
Brady and Watt~\cite[Thm.~1]{BradyWatt} showed that the map \eqref{fixed-space-map}
is also surjective, and in fact becomes a bijective 
order-isomorphism, when one
restricts to a lower interval $[e,c]$ between the identity $e$ and
an element $c$ having $V^c=\{0\}$ in real orthogonal or complex unitary groups.
However, this bijectivity fails for general linear groups, when typically there are many elements below $c$ having the same fixed space.  For example, it is a special case of Theorem~\ref{thm:flag-f-vector} below that there are $q^{n - 2}(q^n - 1)$ reflections in $[e, c] \subset \GL_n(\Fq)$, while there are only $(q^n - 1)/(q - 1)$ hyperplanes in $L(V)$.
\end{remark}

\begin{remark}
For finite real reflection groups, orthogonal/unitary groups, and general
linear groups, the absolute orders $\leq$ are not lattices because they have
many incomparable maximal elements.

However, when one restricts to lower intervals $[e,c]$, absolute orders 
are sometimes lattices.  For example, in the case of orthogonal/unitary groups, Brady and Watt's order-isomorphism $[e,c] \cong L(V)$ shows that every
lower interval is a lattice.  For 
irreducible finite real reflection groups in the case that $c$ is chosen to be a
{\it Coxeter element}, the fact that $[e,c]$ is a lattice
was shown originally via a case-by-case check by Bessis \cite[Fact~2.3.1]{Bessis}
and later with a uniform proof by Reading \cite[Cor.~8.6]{Reading}.

For the general linear groups $\GL(V)=\GL_n(\FF)$ with $n \geq 3$, the intervals $[e,c]$ are not lattices in general.  For example, the interval $[e, c]$ in $\GL_3(\F_3)$ below the Singer cycle
\[
c = 
\begin{bmatrix}
0 & 0 & 2 \\
1 & 0 & 1 \\
0 & 1 & 0
\end{bmatrix}
\]
contains the two reflections
\[
\begin{bmatrix}
1 & 2 & 2 \\
0 & 1 & 0 \\
0 & 1 & 2
\end{bmatrix}
\quad
\textrm{ and }
\quad
\begin{bmatrix}
1 & 2 & 2 \\
0 & 2 & 1 \\
0 & 2 & 0
\end{bmatrix},
\]
both of which are covered by three elements
\[
\begin{bmatrix}
1 & 2 & 2 \\
0 & 1 & 1 \\
0 & 1 & 0
\end{bmatrix},
\quad
\begin{bmatrix}
1 & 2 & 2 \\
1 & 0 & 1 \\
1 & 0 & 0
\end{bmatrix},
\quad 
\begin{bmatrix}
1 & 2 & 2 \\
2 & 2 & 1 \\
2 & 2 & 0
\end{bmatrix}
\]
of absolute length $2$.
\end{remark}

\subsection{Length functions when $T=T^{-1}$}

We close this section on $\ell(-)$
for a generated group $(G,T)$,
with two general facts that hold when $T=T^{-1}$, that
is, when $T$ is closed under taking inverses.
They are reminiscent of 
properties of Coxeter group length functions.

\begin{proposition}\label{prop:lengthpm}
For $(G, T)$ a generated group with $T=T^{-1}$,
any $t$ in $T$ and $g$ in $G$ have
$$
\ell(g)-1 \leq
\ell(tg), \ell(gt)
\leq \ell(g)+1.
$$ 
\end{proposition}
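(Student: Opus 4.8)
The plan is to prove the two-sided inequality by combining subadditivity of $\ell$ with the hypothesis $T = T^{-1}$, exactly as one does for Coxeter length. The key observation is that a single reflection (or more precisely, a single element of $T$) has length $1$, and that $T=T^{-1}$ means the same is true of its inverse, so that both $t$ and $t^{-1}$ are legal single letters.

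First I would establish the upper bound. For any $t \in T$ and $g \in G$, subadditivity \eqref{length-subadditivity} gives $\ell(tg) \le \ell(t) + \ell(g) = 1 + \ell(g)$, since $t \in T$ forces $\ell(t) = 1$ (it cannot be $0$, as that would mean $t = e \notin T$; here one should note the harmless convention that $e \notin T$, or simply that if $t = e$ the statement is trivial). Symmetrically, $\ell(gt) \le \ell(g) + \ell(t) = \ell(g) + 1$. This half does not even need $T = T^{-1}$.

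Next I would get the lower bound, and this is where $T = T^{-1}$ enters. Write $g = (tg)^{-1}$-style manipulation: apply the upper bound already proven, but to the element $tg$ in place of $g$ and the letter $t^{-1}$ in place of $t$. Since $T = T^{-1}$, we have $t^{-1} \in T$, so $\ell\bigl(t^{-1}(tg)\bigr) \le 1 + \ell(tg)$, i.e. $\ell(g) \le \ell(tg) + 1$, which rearranges to $\ell(tg) \ge \ell(g) - 1$. The analogous computation $\ell\bigl((gt)t^{-1}\bigr) \le \ell(gt) + 1$ gives $\ell(gt) \ge \ell(g) - 1$. Combining the two bounds yields $\ell(g) - 1 \le \ell(tg), \ell(gt) \le \ell(g) + 1$.

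There is essentially no main obstacle here — the proof is a two-line application of subadditivity used twice, once forward and once "in reverse" via the inverse letter. The only point requiring a moment's care is the bookkeeping around whether $e \in T$ and hence whether $\ell(t) = 1$ exactly for $t \in T$; since in the intended application $T$ is a set of reflections and $e$ is not a reflection, $\ell(t) = 1$ holds, and in any case the claimed inequalities are vacuous when $t = e$. I would state this proof in three or four short sentences with the two displayed chains of inequalities.
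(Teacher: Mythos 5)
Your proof is correct and is essentially the paper's own argument: the upper bound is immediate from subadditivity, and the lower bound follows by applying subadditivity to $g = (tg\cdot t^{-1})$-type rewritings using $t^{-1}\in T$. The only difference is your aside about $\ell(t)=1$ versus $e\in T$, which is unnecessary since subadditivity only requires $\ell(t)\leq 1$, automatic for any $t\in T$.
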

\begin{proof}
Subadditivity immediately gives 
$\ell(gt), \ell(tg)  \leq \ell(g)+1$.
Meanwhile
\begin{align*}
\ell(g) =\ell(gt \cdot t^{-1}) & \leq \ell(gt)+1,\\
\ell(g) =\ell(t^{-1} \cdot tg) & \leq \ell(tg)+1. \qedhere
\end{align*}
\end{proof}

\noindent
Note that $\ell(tg)=\ell(g)=\ell(gt)$ is possible, e.g.,
whenever $(G,T)$ is a reflection group whose set of
all reflections $T$ contains reflections $t$ of order $3$ or more, 
so that $\ell(t \cdot t)=\ell(t)=1$.

\begin{proposition}[Exchange property]
\label{prop:exchange}
Let $(G, T)$ be a generated group with $T=T^{-1}$
and $T$ closed under $G$-conjugation.
If $\ell(tg)<\ell(g)$ for some $t\in T$ and $g$ in $G$,
then there is a $T$-reduced word $g=t_1\cdots t_k$ 
such that $tg={}^tt_1\cdots {}^tt_{i-1} \cdot t_{i+1}\cdots t_k$.
\end{proposition}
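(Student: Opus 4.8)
The plan is to mimic the classical proof of the Exchange Property for Coxeter groups, using the Hurwitz operators (the shifting property) to replace the role of the usual deletion argument. First I would fix a $T$-reduced word $g = t_1 t_2 \cdots t_k$, so $k = \ell(g)$, and consider the word $(t, t_1, t_2, \ldots, t_k)$ for $tg$, which has length $k+1$. Since $\ell(tg) < \ell(g) = k$, and $\ell(tg) \geq k-1$ by Proposition~\ref{prop:lengthpm} (here we use $T = T^{-1}$), we in fact have $\ell(tg) = k - 1$. So the $(k+1)$-letter word $(t, t_1, \ldots, t_k)$ for $tg$ is \emph{not} reduced, and must admit a $T$-reduced subword of length $k-1$ — but I will instead argue more directly about where a ``cancellation'' can be forced.

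The key step is to push the letter $t$ to the right through the word using the Hurwitz operators $\sigma_1^{-1}, \sigma_2^{-1}, \ldots$, which move $t$ rightward one slot at a time, each time conjugating the letter it passes by $t$: after moving past $t_1, \ldots, t_{i-1}$ one obtains the word $({}^t t_1, {}^t t_2, \ldots, {}^t t_{i-1}, t, t_i, \ldots, t_k)$ for $tg$ (this is where $T$ being closed under $G$-conjugacy is used, so that each ${}^t t_j$ lies in $T$). Because the original $(k+1)$-letter word for $tg$ has length strictly larger than $\ell(tg) = k-1$, there must be some position at which a reduction occurs; concretely, consider the prefixes and find the smallest $i$ such that $\ell({}^t t_1 \cdots {}^t t_{i-1} \cdot t \cdot t_i) < i$, equivalently such that the $i$-letter word $({}^t t_1, \ldots, {}^t t_{i-1}, t \cdot t_i)$ — wait, more carefully: I would look at the subword obtained by deleting $t$ versus keeping it, i.e.\ compare $\ell(({}^t t_1) \cdots ({}^t t_{i}))$ against $\ell(({}^t t_1) \cdots ({}^t t_{i-1}) \cdot t)$, and use that reduced-ness of $(t_1, \ldots, t_k)$ (equivalently of $({}^t t_1, \ldots, {}^t t_k)$, since conjugation by $t$ preserves length) forces the first ``drop'' to come from absorbing $t$ into some $t_i$. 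At that index $i$ one gets ${}^t t_1 \cdots {}^t t_{i-1} \cdot t \cdot t_i \cdot t_{i+1} \cdots t_k$ still equals $tg$, but now the prefix ${}^t t_1 \cdots {}^t t_{i-1}$ together with $t \, t_i$ has length $i - 1$, which (again by Proposition~\ref{prop:lengthpm}, since $\ell({}^t t_1 \cdots {}^t t_{i-1}) = i-1$ and multiplying on the right by a single element of $T$ changes length by at most one) can only happen if $\ell({}^t t_1 \cdots {}^t t_{i-1} \cdot (t \, t_i)) = i - 2 < i-1$; but $t\, t_i$ has length $\leq 2$ — the cleanest way is to instead observe $t \cdot ({}^t t_1 \cdots {}^t t_{i-1}) = (t_1 \cdots t_{i-1}) \cdot t$, so that $t g = t_1 \cdots t_{i-1} \cdot t \cdot t_i \cdots t_k$ after re-tracking, and then the standard argument: among the words of this shape one of length $k-1$ must drop a letter, and the dropped letter is exactly $t \cdot t_i$ collapsing, leaving $tg = {}^t t_1 \cdots {}^t t_{i-1} \cdot t_{i+1} \cdots t_k$ as claimed.

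Concretely, here is the clean route I would actually write: set $s_j := {}^t t_j \in T$ for each $j$, so that $(s_1, \ldots, s_k)$ is a $T$-reduced word for ${}^t g = t g t^{-1}$ and hence $t g = s_1 \cdots s_k \cdot t$. Now consider $t g$ written as $s_1 \cdots s_k t$; since $\ell(tg) = k-1 < k+1 = $ length of this word, and since $(s_1, \ldots, s_k)$ is reduced, repeated subadditivity forces that in passing from the empty prefix to the full word $s_1 \cdots s_k t$ the length must fail to increase at some step, and the only step where it \emph{can} fail (given $(s_1, \ldots, s_k)$ reduced) is the last one or a merge. I would make this precise by letting $i$ be minimal with $\ell(s_1 \cdots s_i t) \le \ell(s_1 \cdots s_{i-1} t)$; such $i$ exists because $\ell(t) = 1$ while $\ell(s_1 \cdots s_k t) = \ell(tg) = k-1$, so the sequence $\ell(t), \ell(s_1 t), \ldots, \ell(s_1 \cdots s_k t)$ cannot be strictly increasing. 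Minimality plus $\ell(s_1\cdots s_{i-1} t) \le \ell(s_1 \cdots s_{i-1}) + 1 = (i-1) + 1 = i$ forces $\ell(s_1 \cdots s_{i-1}t) = i$ and $\ell(s_1 \cdots s_i t) = i - 1$; since also $\ell(s_1 \cdots s_i) = i$, Proposition~\ref{prop:lengthpm} applied to right-multiplication by $t$ gives $\ell(s_1 \cdots s_i) - 1 = i - 1 = \ell(s_1 \cdots s_i t)$, i.e.\ $s_1 \cdots s_i t$ is a reduced word of length $i-1$, so it equals some reduced word $(u_1, \ldots, u_{i-1})$ — but I want more, namely $s_1 \cdots s_i t = s_1 \cdots s_{i-1}$. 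This I get from the \emph{reduced}-ness of $(s_1, \ldots, s_{i-1}, s_i)$ together with $\ell(s_1 \cdots s_{i-1} \cdot (s_i t)) = i-1 < i$: apply the subword/prefix structure (Definition~\ref{order-definition}(iii)) to conclude $s_1 \cdots s_{i-1} \le s_1 \cdots s_{i-1} s_i t$ is the relation that pins down $(s_i t)$ as the element making $s_1 \cdots s_{i-1}(s_i t) = s_1 \cdots s_{i-1} \cdot w$ with $\ell(w) = 0$, i.e.\ $s_i t = e$, hence $t = s_i^{-1} = {}^t t_i^{-1}$. Then $t g = s_1 \cdots s_k t = s_1 \cdots s_{i-1} \cdot (s_i t) \cdot s_{i+1} \cdots s_k = s_1 \cdots s_{i-1} s_{i+1} \cdots s_k = {}^t t_1 \cdots {}^t t_{i-1} \cdot t_{i+1} \cdots t_k$, using $s_j = {}^t t_j$ — wait, this gives ${}^t t_{i+1} \cdots {}^t t_k$ for the tail, not $t_{i+1} \cdots t_k$, so I would instead undo the conjugation: from $t = s_i^{-1}$ we get $s_i = t^{-1} = t$ (if $t^2 = e$) — but $t$ need not have order $2$. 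The correct bookkeeping: $s_i = {}^t t_i$ and $s_i t = e$ means $t_i = {}^{t^{-1}} (s_i) = {}^{t^{-1}}(t^{-1}) = t^{-1}$, consistent; and then $tg = t_1 \cdots t_k$ with $t = t_i^{-1}$ gives directly $tg = t_1 \cdots t_{i-1} \cdot t_i^{-1} \cdot t_i \cdot t_{i+1}\cdots t_k$? No — rather, from $t g = t_1 \cdots t_k$ and $t = t_i^{-1}$: $tg = t \cdot t_1 \cdots t_k$; moving $t$ rightward past $t_1, \ldots, t_{i-1}$ via Hurwitz gives ${}^t t_1 \cdots {}^t t_{i-1} \cdot t \cdot t_i \cdots t_k = {}^t t_1 \cdots {}^t t_{i-1} \cdot (t_i^{-1} t_i) \cdot t_{i+1} \cdots t_k = {}^t t_1 \cdots {}^t t_{i-1} \cdot t_{i+1} \cdots t_k$, which is exactly the claimed formula.

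The main obstacle, and the place I would be most careful, is the middle step — establishing that the first length-drop in the sequence $\ell(t) = \ell(s_1\cdots s_0 t), \ell(s_1 t), \ldots, \ell(s_1 \cdots s_k t)$ occurs precisely by \emph{absorbing} $t$ into $s_i$ (so $s_i t = e$), rather than by some more complicated collapse. The cleanest tool for this is to go back to the word level: the $(k+1)$-letter word $(s_1, \ldots, s_k, t)$ for $tg$ is non-reduced, so by the subword property (Proposition~\ref{prop:subword}) it contains a reduced subword for $tg$ of length $k-1$; since $(s_1, \ldots, s_k)$ is already reduced and represents ${}^t g \ne tg$ in general, the deleted two letters cannot both come from $\{s_1, \ldots, s_k\}$ (that would give a length-$(k-2)$ word for ${}^tg$, contradicting $\ell({}^tg) = \ell(g) = k$), so exactly one deleted letter is $t$ and one is some $s_i$; tracking the Hurwitz moves that realize this deletion shows $s_i$ gets conjugated to $t^{-1}$ before cancelling against $t$, which is the statement $s_i = {}^? t^{-1}$ unwound above. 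I would present this subword-property argument as the core, and treat the Hurwitz repositioning as routine (citing the shifting property and Proposition~\ref{prop:subword}), keeping the displayed computation of $tg = {}^t t_1 \cdots {}^t t_{i-1} t_{i+1} \cdots t_k$ as the final line.
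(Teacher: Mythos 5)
Your argument has a genuine gap: you fix an \emph{arbitrary} $T$-reduced word $g=t_1\cdots t_k$ at the outset and then try to force the conclusion (via the claim $s_it=e$, i.e.\ $t_i=t^{-1}$) for that word, but the proposition is existential precisely because this fails for a general reduced word. Concretely, in $G=\symm_3$ with $T$ the transpositions, take $g=(1\,2\,3)=(1\,2)(2\,3)$ and $t=(1\,3)$, so $\ell(tg)=1<2=\ell(g)$. With your $s_j={}^t t_j$ one finds the first length drop at $i=2$, yet $s_2t=(1\,2)(1\,3)\neq e$, $t_2\neq t^{-1}$, and indeed $tg={}^t t_1\cdots {}^t t_{i-1}\,t_{i+1}\cdots t_k$ fails for this word for every $i$; one must pass to the other reduced word $g=(1\,3)(1\,2)$. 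Two subsidiary steps are also unjustified: minimality of $i$ only gives $\ell(s_1\cdots s_i t)\le \ell(s_1\cdots s_{i-1}t)=i$, not $=i-1$ (Proposition~\ref{prop:lengthpm} is a two-sided bound, not an equality), and $\ell\bigl(s_1\cdots s_{i-1}(s_it)\bigr)=i-1$ certainly does not pin down $s_it=e$. Your fallback appeal to Proposition~\ref{prop:subword} is likewise a misapplication: that proposition concerns a pair $g\le h$ and produces \emph{some} reduced word for $h$; it does not say that every non-reduced word for an element contains a reduced subword for that \emph{same} element, and that deletion-type statement is false in general (e.g.\ in $\ZZ/3\ZZ$ with $T=\{1,2\}$, the word $(1,1)$ for the element $2$ contains no reduced subword for $2$).

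The repair is short and is the paper's route: from $\ell(tg)<\ell(g)$ and Proposition~\ref{prop:lengthpm} get $\ell(tg)=\ell(g)-1$, hence $\ell(t^{-1})+\ell\bigl((t^{-1})^{-1}g\bigr)=1+\ell(tg)=\ell(g)$, i.e.\ $t^{-1}\le g$ in the prefix order (here $T=T^{-1}$ is used so that $t^{-1}\in T$). Now invoke Proposition~\ref{prop:subword} in its intended form: it supplies a $T$-reduced word $g=t_1\cdots t_k$ in which $t^{-1}$ occurs as a letter, say $t_i=t^{-1}$. Then $tg=t\,t_1\cdots t_{i-1}\,t^{-1}\,t_{i+1}\cdots t_k={}^t t_1\cdots{}^t t_{i-1}\cdot t_{i+1}\cdots t_k$, which is the desired conclusion; your closing Hurwitz computation is exactly this last line, but it must be applied to the word produced by the subword property, not to a word chosen in advance.
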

\begin{proof}
If $\ell(tg)<\ell(g)$ for some $t\in T$ 
then Proposition~\ref{prop:lengthpm} implies $\ell(tg)=\ell(g)-1$.
Hence $t^{-1}\leq g$  and the 
subword property (Proposition~\ref{prop:subword}) 
implies that $t^{-1}$ is a subword of $(t_1,\ldots,t_k)$ for
some $T$-reduced expression $g=t_1\cdots t_k$.  If $t_i=t^{-1}$, then
\[
tg=tt_1\cdots t_{i-1}t^{-1}t_{i+1}\cdots t_k
  ={}^tt_1\cdots {}^tt_{i-1} \cdot t_{i+1}\cdots t_k.
\qedhere
\]
\end{proof}


\section{Counting ranks in the absolute order on
$\GL_n(\FF_q)$}
\label{ranks-in-whole-order}

When the field $\FF=\FF_q$ is finite, so that $\GL_n:=\GL_n(\FF_q)$ is
finite, it is easy to give an explicit formula and generating
function counting elements at rank $k$ in the absolute order on $\GL_n$, that is, 
those having fixed space codimension $k$.  Such a formula,
equivalent to \eqref{Fulman-formula} below,
was derived\footnote{Fulman credits its first proof to
unpublished work of Rudvalis and Shinoda \cite{RudvalisShinoda}.}
 in work of Fulman \cite[Thm.~6(1)]{Fulman} in a probabilistic context.

In the formula and elsewhere, we will use some standard $q$-analogues:
\begin{align*}
(x;q)_n&:=(1-x)(1-xq)(1-xq^2) \cdots (1-xq^{n-1}),\\
[n]_q & :=1+q+q^2+\cdots+q^{n-1},\\
[n]!_q & :=[1]_q [2]_q \cdots [n]_q=\frac{(q;q)_n}{(1-q)^n},\\
\qbin{n}{k}{q} & :=\frac{[n]!_q}{[k]!_q [n-k]!_q}
                       =\frac{(q;q)_n}{(q;q)_k (q;q)_{n-k}}
                       =\#\{k\text{-dimensional }\FF_q\text{-subspaces of }V=\FF_q^n\}.
\end{align*}
We mention for future use the fact that
\begin{equation}
\label{GL-cardinality}
|\GL_n(\FF_q)| =(q^n-1)(q^n-q)(q^n-q^2)\cdots (q^n-q^{n-1}) 
=(-1)^n q^{\binom{n}{2}} (q;q)_n
\end{equation}
as well as the $q$-binomial theorem \cite[(1.87)]{EC1}:
\begin{equation}
\label{q-binomial-theorem}
(x;q)_n = \sum_{k=0}^n (-1)^k q^{\binom{k}{2}} \qbin{n}{k}{q} x^k.
\end{equation}

\begin{proposition}
\label{abs-order-rank-sizes}
The number of $g$ in $\GL_n:=\GL_n(\F_q)$ having rank $k$ in absolute order is
\begin{align}
\label{formula for r(n, k)}
r_q(n, k) 
 &:= (-1)^k q^{{k \choose 2}} \qbin{n}{k}{q} 
      \sum_{j=0}^k  \qbin{k}{j}{q} q^{j(n-k)}   (q;q)_j \\
\label{Fulman-formula}
&= \frac{|\GL_n|}{|\GL_{n-k}|}
     \sum_{j=0}^k \frac{ (-1)^j q^{\binom{j}{2}-j(n-k)} }{|\GL_j|},
\end{align}
with generating function
\begin{equation}
\label{gf for r(n, k)}
1+\sum_{n \geq 1} 
\left( 
 \sum_{0 \leq k \leq n} r_q(n,k)
    x^{n - k}
\right)  \frac{y^n}{|\GL_n|}  = \frac{1}{1 - y}  \sum_{n \geq 0} \frac{(x; q^{-1})_n}{(q; q)_n}  y^n.
\end{equation}
\end{proposition}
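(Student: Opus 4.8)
The plan is to count pairs $(g, U)$ where $U = V^g$ is the fixed space: for a given $k$-dimensional subspace $W$ (and there are $\qbin{n}{k}{q}$ choices of the $(n-k)$-dimensional fixed space, or equivalently $\qbin{n}{n-k}{q} = \qbin{n}{k}{q}$ of the rank-$k$ space complementary to it in codimension terms), count the number $f_q(n,k)$ of $g \in \GL_n$ with $V^g$ exactly equal to a fixed $(n-k)$-dimensional space $W'$; then $r_q(n,k) = \qbin{n}{k}{q} \cdot f_q(n,k)$. To get $f_q(n,k)$ I would first count the elements $g$ fixing $W'$ pointwise (not necessarily with $V^g = W'$ exactly): writing $g$ in block form as in the proof of Proposition~\ref{length=codim}, relative to a decomposition $V = W \oplus W'$ with $W'$ the prescribed space, such $g$ have the form $\left[\begin{smallmatrix} A & 0 \\ B & \one \end{smallmatrix}\right]$ with $A \in \GL_k$ arbitrary and $B$ an arbitrary $(n-k) \times k$ matrix, giving $q^{k(n-k)} |\GL_k|$ of them; but this overcounts because $V^g$ may be strictly larger than $W'$. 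The number of $g$ fixing $W'$ pointwise with $V^g \supseteq W''$ for a specified $j$-dimensional overspace $W'' \supseteq W'$ (here $j$ ranges over $n-k \le j \le n$, or reindexing, the excess dimension ranges $0$ to $k$) is $q^{?}|\GL_{?}|$ of the same shape in smaller rank, so Möbius inversion over the interval $[W', V]$ in the subspace lattice — which is isomorphic to the lattice of subspaces of a $k$-dimensional space and has the well-known Möbius function $(-1)^i q^{\binom{i}{2}}$ — untangles $f_q(n,k)$ from these cumulative counts.

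Concretely, the key steps in order: (1) establish that for fixed $(n-k)$-dimensional $W'$, the count of $g \in \GL_n$ with $V^g \supseteq W'$ is $q^{k(n-k)}|\GL_k|$; (2) observe more generally that the count with $V^g \supseteq W''$ for $(n-k+i)$-dimensional $W'' \supseteq W'$ depends only on dimensions and equals $q^{(k-i)(n-k+i)} |\GL_{k-i}|$; (3) apply Möbius inversion in the interval $[W', V] \cong L(\Fq^k)$ using $\mu = (-1)^i q^{\binom{i}{2}}$ together with the fact that the number of $i$-dimensional overspaces of $W'$ inside $V$ is $\qbin{k}{i}{q}$, to obtain $f_q(n,k) = \sum_{i=0}^k (-1)^i q^{\binom{i}{2}} \qbin{k}{i}{q} q^{(k-i)(n-k+i)}|\GL_{k-i}|$; (4) multiply by $\qbin{n}{k}{q}$ and reindex $j = k-i$ and simplify using \eqref{GL-cardinality} to land on \eqref{formula for r(n, k)}, and divide through by $|\GL_{n-k}|$ and further manipulate with \eqref{GL-cardinality} to reach the Fulman form \eqref{Fulman-formula}. (5) For the generating function \eqref{gf for r(n, k)}, substitute \eqref{Fulman-formula} into $\sum_{0 \le k \le n} r_q(n,k) x^{n-k}/|\GL_n|$, swap the order of summation over $j$ and $n$, recognize the inner sum over $n$ (with $\ell = n-k \ge 0$ and $m = k - j \ge 0$ free) as a product of three generating series — one geometric series $\frac{1}{1-y}$ coming from the $x^\ell y^\ell$ part, one from the $j$-sum, one from the $m$-sum — and reassemble; the substitution $x \mapsto x$, $q \mapsto q^{-1}$ appearing in $(x;q^{-1})_n$ should emerge from collecting the powers $q^{\binom{j}{2} - j(n-k)}$ against $\qbin{k}{j}{q}$-type factors via the $q$-binomial theorem \eqref{q-binomial-theorem}.

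The main obstacle I expect is step (1)–(2): verifying that "number of $g$ with $V^g \supseteq W'$" really is $q^{k(n-k)}|\GL_k|$ with $V^g$ allowed to be anything containing $W'$ — one must be careful that $g$ fixing $W'$ pointwise is equivalent to $V^g \supseteq W'$, which is immediate, and then that such $g$ in block form with the lower-right block equal to $\one_{n-k}$ has no further constraints beyond $A \in \GL_k$; this is straightforward but must be set up cleanly. The genuinely fiddly part is the generating-function bookkeeping in step (5): matching the powers of $q$ coming from $q^{\binom{j}{2}-j(n-k)}$ and from the $q$-binomial coefficients so that the answer collapses to $\frac{1}{1-y}\sum_n \frac{(x;q^{-1})_n}{(q;q)_n} y^n$ requires an application of \eqref{q-binomial-theorem} in the variable $q^{-1}$, and keeping track of the $q^{\binom{n}{2}}$ factors hidden in $|\GL_n|$ versus $(q;q)_n$ via \eqref{GL-cardinality}. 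I would do that computation last, after the closed formulas are nailed down, since it is purely formal once \eqref{Fulman-formula} is in hand.
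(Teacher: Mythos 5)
Your proposal is correct and follows essentially the same route as the paper: fix an $(n-k)$-codimensional space, count elements fixing it pointwise as $q^{k(n-k)}|\GL_k|$ via the block form, and apply M\"obius inversion in the subspace lattice (M\"obius function $(-1)^i q^{\binom{i}{2}}$) to isolate the exact-fixed-space count, then multiply by $\qbin{n}{k}{q}$. The only cosmetic difference is in \eqref{gf for r(n, k)}: you expand the left side and resum (grouping by $\ell+j$ and applying \eqref{q-binomial-theorem} in base $q^{-1}$, rather than a literal product of independent series), whereas the paper extracts the coefficient of $y^n x^{n-k}$ from the right side and matches it to \eqref{formula for r(n, k)} --- the same formal computation run in the opposite direction.
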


\begin{proof}
The equivalence of formulas \eqref{formula for r(n, k)} and \eqref{Fulman-formula}
is a straightforward exercise using \eqref{GL-cardinality}.  Thus we will derive \eqref{formula for r(n, k)}, and then check that it agrees with \eqref{gf for r(n, k)}.

By Proposition~\ref{length=codim}, 
we need to count elements in $\GL_n$
whose fixed subspace has codimension $k$. 
For a subspace $W$ of $V=\F_q^n$, let 
\begin{align*}
g(W)&:=|\{g \in G: V^g = W\}|,\\
f(W)&:=|\{g \in G: V^g \supseteq W\}|=\sum_{U\supseteq W}g(U),
\end{align*}
so that if $\codim(W)=k$ one has 
\begin{align}
\label{q-binomial-inflation}
r_q(n,k) &= \qbin{n}{k}{q} g(W),\\
\notag
f(W) &= q^{k(n-k)} |\GL_k|= q^{k(n-k)} \cdot (-1)^k q^{\binom{k}{2}} (q;q)_k.
\end{align}
M\"obius inversion \cite[Ex.~3.10.2]{EC1} in the 
lattice of subspaces of $\Fq^n$ gives for $\codim(W)=k$, 
\[
g(W) = \sum_{U \supseteq W} \mu(W,U) f(U) \\
=\sum_{j=0}^k \qbin{k}{j}{q} (-1)^{k-j}
q^{{k-j\choose 2}} \cdot (-1)^j q^{j(n-j)+{j \choose 2}}(q;q)_j
\]
from which \eqref{formula for r(n, k)} 
follows via \eqref{q-binomial-inflation}.

To check \eqref{gf for r(n, k)}, use \eqref{q-binomial-theorem} to see that the coefficient of $y^n$ on its right is
\[
\sum_{m=0}^n \frac{(x;q^{-1})_m}{(q;q)_m}
= \sum_{m=0}^n \frac{1}{(q;q)_m} \sum_{i=0}^m (-1)^i q^{-\binom{i}{2}} \qbin{m}{i}{q^{-1}} x^i.
\]
Therefore the coefficient of $y^n x^{n-k}$
on the right of \eqref{gf for r(n, k)} equals
\[
(-1)^{n-k} q^{-\binom{n-k}{2}}
\sum_{m=n-k}^n \frac{ 1 }{(q;q)_m} \qbin{m}{n-k}{q^{-1}}.
\]
Reindexing $j:=n-m$ in the summation, and
using the fact that
$$
\qbin{a+b}{a}{q^{-1}}
= q^{-ab} \qbin{a+b}{a}{q},
$$
one then finds that the coefficient of  $y^n x^{n-k} / |\GL_n|$ on the right 
of \eqref{gf for r(n, k)}  equals
$$
|\GL_n| \cdot (-1)^{n-k} q^{-\binom{n-k}{2}}
\sum_{j=0}^k \frac{ q^{-(n-k)(k-j)} }{(q;q)_{n-j}}  \qbin{n-j}{n-k}{q} 
= (-1)^{k} q^{\binom{k}{2}} \qbin{n}{k}{q}
\sum_{j=0}^k \qbin{k}{j}{q} q^{j(n-k)} (q;q)_{j},
$$
which agrees with the formula \eqref{formula for r(n, k)} for $r_q(n,k)$.
\end{proof}

\begin{remark}
\label{r(n, k) vs. Stirling}
The formula~\eqref{formula for r(n, k)} for $r_q(n, k)$ is reminiscent of the inclusion-exclusion formula 
\[
\binom{n}{k} \sum_{j=0}^k (-1)^{j} \binom{k}{j} (k-j)! 
\]
counting permutations with $n - k$ fixed points.  On the other hand, it seems more natural to think of $r_q(n,k)$ as a $q$-analogue of $c(n,n-k)$,
the \emph{signless Stirling number of the first kind}, counting
the permutations in the symmetric group $\symm_n$ having $n-k$ cycles:  
when $\symm_n$ acts as a real reflection
group permuting coordinates in $V=\RR^n$, 
a permutation $\sigma$ with $n-k$ cycles has $\codim(V^\sigma)=k$.
In this sense, Equation~\eqref{gf for r(n, k)} gives a $q$-analogue of the formula
\[
1+\sum_{n \geq 1} \sum_{0 \leq k \leq n} c(n,n-k)
    x^{n - k} \frac{y^n}{n!}  = (1 - y)^{-x} = \sum_{k = 0}^\infty (-1)^k \binom{-x}{k} y^k,
\]
particularly when one observes that 
$\dfrac{(x; q^{-1})_k}{(q; q)_k} = \qbin{N}{k}{q}$ if $x = q^N$.
\end{remark}


\section{Counting chains below a Singer cycle in 
$\GL_n(\F_q)$}
\label{section:chains}

In the theory of finite 
irreducible real reflection groups, the interval $[e,c]$ in absolute order
below a \emph{Coxeter element} $c$ is sometimes
called the poset $NC(W)$ of \defn{$W$-noncrossing partitions}.
It is extremely well-behaved from several enumerative points of view,
including pleasant formulas for its cardinality, its
\emph{M\"{o}bius function}, and its \emph{zeta polynomial}.  In the classical
types $A,B/C,D$ one additionally has formulas for the following
more refined counts; see Edelman \cite[Thm.~3.2]{Edelman} for type $A$, 
Reiner \cite[Prop.~7]{Reiner} for types $B/C$, and Athanasiadis--Reiner \cite[Thm.~1.2(ii)]{AthanasiadisReiner} for type $D$.

\begin{definition}
\label{flag-f-definition}
Fix a reflection group $G$ having an absolute order,
and an element $c$ of $G$ with $\ell(c)=n$.  
The {\it flag $f$-vector} $(f_\alpha)$ of the interval $[e,c]$ has entries 
$f_\alpha:=f_\alpha[e,c]$ indexed by {\it compositions}
$\alpha=(\alpha_1,\ldots,\alpha_m)$ of $n=\sum_i \alpha_i$ with $\alpha_i > 0$. 
The entry $f_\alpha[e,c]$ is the number of chains 
\[
e=c_0 < c_1 < c_2 < \cdots < c_{m-1} < c_m=c
\]
in which $c_i$ has rank $\alpha_1+\alpha_2+\cdots+\alpha_i$ for each $i$.

Since $c_{i-1} < c_{i}$ if and only if $g_i:=c_{i-1}^{-1} c_i$ has 
$\ell(g_i)=\ell(c_i)-\ell(c_{i-1})$, one can rephrase the definition as
\[
f_\alpha[e,c]=\Big|\{(g_1,\ldots,g_m) \in G^m \colon c=g_1 \cdots g_m, \text{ and }\ell(g_i)=\alpha_i \text{ for each }i\}\Big|.
\] 
\end{definition}

As mentioned in the introduction, when viewing $\GL_n(\FF_q)$ as a finite
reflection group, the role analogous to that of a Coxeter element is
played by a \defn{Singer cycle} $c$,
which is the image of a multiplicative generator 
for $\FF_{q^n}^\times$ after one embeds
$\FF_{q^n}^\times$ into $\GL_n(\FF_q) \cong \GL_{\FF_q}(\FF_{q^n})$; see \cite[\S 9]{CSP}, \cite[Thm.~19]{RStantonWebb}, \cite{LRS}. 
Our goal in this section is to prove 
an unexpectedly simple formula for
the flag $f$-vector $f_\alpha[e,c]$ when $c$ is a Singer cycle; see Theorem~\ref{thm:flag-f-vector} below.  The special case where $\alpha=(1,1,\ldots,1)$ 
appeared in Lewis--Reiner--Stanton~\cite{LRS}, where it was
shown that there are exactly $(q^n-1)^{n-1}$ maximal chains in $[e,c]$ (equivalently, minimal factorizations of a Singer cycle into reflections).

In fact, the theorem also confirms a special case\footnote{Theorem~\ref{thm:flag-f-vector} confirms the special case  \cite[Conj.~6.3 at $\ell=n$]{LRS}.
In forthcoming work \cite{LewisMorales}, the second author and Morales use the same techniques to confirm \cite[Conj.~6.3]{LRS} in full generality.} 
of \cite[Conj.~6.3]{LRS}: 
it applies not only to a Singer cycle $c$, but to any 
element $c$ in $\GL_n(\FF_q)$ which is \defn{regular elliptic}, meaning that $c$ stabilizes no proper subspaces
in $\FF_q^n$.  (Equivalently, regular elliptic elements are those that act on $V=\FF_q^n$ with characteristic polynomial which is irreducible in $\FF_q[x]$; see \cite[Prop.~4.4]{LRS} for other equivalent definitions.)

 To state the theorem, define the quantity
$$
\varepsilon(\alpha) := \sum_{i=1}^{m} (\alpha_i-1)(n-\alpha_i). 
$$

\begin{theorem}
\label{thm:flag-f-vector}
For any regular elliptic element  $c$ in $\GL_n(\FF_q)$ and any composition $\alpha = (\alpha_1, \ldots, \alpha_m)$ of $n$, one has
\begin{equation}
\label{q-flag-count}
f_\alpha[e,c] = q^{\varepsilon(\alpha)} \cdot (q^n - 1)^{m - 1}.
\end{equation}
In particular, the number of elements of $[e,c]$ of rank $k$ for $1 \leq k \leq n-1$ is 
\begin{equation}
\label{interval-rank-sizes}
f_{(k,n-k)}[e,c] = q^{2k(n-k)-n} \cdot (q^n - 1).
\end{equation}
\end{theorem}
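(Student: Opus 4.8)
The plan is to compute $f_\alpha[e,c]$ via Frobenius's character-theoretic formula for counting factorizations in a finite group. Recall that $f_\alpha[e,c]$ counts tuples $(g_1,\ldots,g_m) \in \GL_n(\Fq)^m$ with $g_1 \cdots g_m = c$ and $\ell(g_i) = \alpha_i$ for each $i$. By Proposition~\ref{length=codim}, the condition $\ell(g_i) = \alpha_i$ is the same as $\codim(V^{g_i}) = \alpha_i$, i.e.\ $g_i$ lies in the set $A_{\alpha_i}$ of elements of fixed-space codimension exactly $\alpha_i$. Each $A_k$ is a union of conjugacy classes, so $f_\alpha[e,c]$ is a sum over class data of Frobenius's quantity, and by class-function expansion one gets
\[
f_\alpha[e,c] = \frac{1}{|\GL_n(\Fq)|} \sum_{\chi \in \Irr(\GL_n(\Fq))} \frac{\overline{\chi(c)}}{\chi(1)^{m-1}} \prod_{i=1}^m \left( \sum_{g \in A_{\alpha_i}} \chi(g) \right).
\]
So the strategy has two ingredients: first, understand $\chi(c)$ for $c$ regular elliptic; second, evaluate the ``partial character sums'' $\sum_{g \in A_k} \chi(g)$.

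For the first ingredient I would use the known description of the values of irreducible characters of $\GL_n(\Fq)$ on regular elliptic elements, going back to Green and Steinberg: the characters that do not vanish on a regular elliptic $c$ are very restricted (roughly, the ones indexed by a single "hook-like" or Lusztig-series datum attached to $\FF_{q^n}^\times$), and on such $c$ the value $\chi(c)$ is $\pm 1$ times a root of unity or a small integer. I expect the paper has already set up (in a section not shown, e.g.\ via a Proposition labeled \texttt{character values prop}) the precise list of these characters and their degrees $\chi(1)$. For the second ingredient, the key observation is that $A_k$ is exactly the preimage under $\pi\colon g \mapsto V^g$ of the set of codimension-$k$ subspaces, together with an "Orlik--Solomon"-type count of how many $g$ fix a given subspace $W$ with a prescribed action on $V/W$; this reduces $\sum_{g \in A_k}\chi(g)$ to a sum that can be handled using the parabolic/Harish-Chandra structure of $\GL_n$, Deligne--Lusztig induction from Levi subgroups, or a direct generating-function identity. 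The cleanest route is probably to recognize $\sum_{g\in A_k}\chi(g)$ as (up to the $q$-binomial factor counting subspaces) a value related to the restriction of $\chi$ to a maximal parabolic $P$ with Levi $\GL_k \times \GL_{n-k}$, evaluated against the character of $\GL_k$ supported on elements with trivial fixed space.

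The main obstacle is the bookkeeping: controlling which irreducible $\chi$ contribute (only those nonvanishing on $c$ survive), computing both $\chi(1)$ and the partial sums $\sum_{g \in A_k}\chi(g)$ for each such $\chi$ simultaneously, and then seeing the enormous cancellation collapse to the single clean monomial $q^{\varepsilon(\alpha)}(q^n-1)^{m-1}$. I would first verify the base case $m=1$ (where $f_{(n)}[e,c] = 1$ forces the normalization and gives a useful identity $\sum_\chi \overline{\chi(c)} \sum_{g\in A_n}\chi(g) = |\GL_n(\Fq)|$) and the case $m = 2$, since \eqref{interval-rank-sizes} is just $f_\alpha$ for $\alpha = (k, n-k)$ and will be a direct corollary once the general formula is in hand; indeed $\varepsilon((k,n-k)) = (k-1)(n-k) + (n-k-1)k = 2k(n-k) - n$, which matches. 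The induction/multiplicativity in $m$ should then follow because the character sum factors over the parts $\alpha_i$ once the $\chi(c)$ and $\chi(1)$ behavior is pinned down — the content is entirely in the two ingredients above.
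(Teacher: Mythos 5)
Your setup coincides with the paper's first step: apply Frobenius's factorization formula with $A_k$ the set of elements of fixed-space codimension $k$, invoke the vanishing of most irreducible $\GL_n(\FF_q)$-characters on a regular elliptic element, and evaluate the partial sums $\sum_{g\in A_k}\chi(g)$ via the parabolic/Harish--Chandra structure. But the proposal stops exactly where the real work begins. The partial character sums are not mere bookkeeping: the paper needs a precise formula (Proposition~\ref{character values prop}, proved in the appendix by M\"obius inversion over the subspace lattice, Frobenius reciprocity, and Pieri's rule) showing that for the unipotent hooks $\chi^{\one,\hook{d}{n}}$ the normalized sum $\normchi(z_k)$ equals $\P_k(q^{-d})$ for an explicit degree-$k$ polynomial $\P_k(x)$, and that for every primary character attached to a nontrivial cuspidal the value is $(-1)^k q^{\binom{k}{2}}\qbin{n}{k}{q}$, \emph{independent} of the character. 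That independence is what allows the nontrivial-cuspidal part of the sum to be killed by a regular-representation cancellation; without it, that part of the sum is not under control.

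The more serious gap is your closing claim that ``the induction/multiplicativity in $m$ should then follow because the character sum factors over the parts $\alpha_i$.'' The product $\prod_i\normchi(z_{\alpha_i})$ factors only for each fixed $\chi$; all $m$ factors share the same $\chi$, so the sum over the surviving characters (indexed by hooks $d=0,\ldots,n-1$ and by cuspidals of weight dividing $n$) does not factor, and there is no straightforward induction on $m$. After the character values are pinned down one is left with $\sum_{d=0}^{n-1}(-1)^d q^{\binom{d+1}{2}}\qbin{n-1}{d}{q}\prod_i\P_{\alpha_i}(q^{-d})$, and collapsing this to the single monomial $q^{\varepsilon(\alpha)}(q^n-1)^{m-1}$ is the heart of the proof: the paper does it by a $q$-analogue of Zagier's difference-operator argument, recognizing the sum as $\Delta_q^{n-1}$ applied to $\P_\alpha(x)/x$, where only the top-degree and $x^{-1}$ terms survive, the latter cancelling against the nontrivial-cuspidal contribution. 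Checking $m=1$ and $m=2$ by hand supplies no mechanism for general $m$. So the framework is right, but the two ingredients you defer --- the explicit polynomial character sums and the evaluation of the resulting hook-sum --- constitute the proof, and the multiplicativity you invoke to bypass them is not available.
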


We remark that
Theorem~\ref{thm:flag-f-vector} appears very reminiscent of a special case of Goulden and Jackson's \emph{cactus formula}, counting the \defn{genus zero} 
factorizations $\sigma=\sigma_1 \cdots \sigma_m$ of an $n$-cycle~$\sigma$;  these are
the factorizations which are additive  $\sum_{i=1}^m \ell(\sigma_i) = \ell(\sigma)$
for the absolute length function given by 
$
\ell(\tau)=\sum_j({\lambda_j-1})
$
if $\tau$ has cycle sizes $(\lambda_1,\lambda_2,\ldots)$.  (This is the same length function discussed in Remark~\ref{r(n, k) vs. Stirling}.)
To state it, we need the following notation: given a partition $\lambda=1^{m_1} 2^{m_2} 3^{m_i} \cdots$
having $m_i$ parts of size $i$ and $m:=\sum_i m_i$ parts total, define
\[
N(\lambda)=\frac{1}{m}\binom{m}{m_1,m_2,\ldots}.
\]
If $\lambda=(\lambda_1,1^{n-\lambda_1})$ corresponds to a permutation
with only one nontrivial cycle then $N(\lambda)=1$.

\begin{theorem}[Cactus formula {\cite[Thm.~3.2]{GouldenJackson}}]
\label{cactus-theorem}
For an $n$-cycle $\sigma$ in the symmetric group $\symm_n$, the number of factorizations
$\sigma=\sigma_1 \cdots \sigma_m$ that
\begin{compactitem}
\item are additive, i.e., $\sum_i \ell(\sigma_i)=n-1(=\ell(\sigma))$, and
\item have $\sigma_i$ with cycle sizes $(\lambda^{(i)}_1,\lambda^{(i)}_2,\ldots)= \lambda^{(i)}$
\end{compactitem}
is given by 
\[
n^{m-1} \prod_{i=1}^m  {N(\lambda^{(i)})}.
\]
In particular, in the special case where each $\sigma_i$ has only one nontrivial cycle,
 the number of factorizations is given by
\begin{equation}
\label{cactus-special-case}
n^{m-1}.
\end{equation}
\end{theorem}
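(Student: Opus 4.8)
The plan is to evaluate this count by Frobenius's class-algebra formula, exploiting the fact that the only irreducible characters of $\symm_n$ not vanishing on an $n$-cycle are those indexed by hook shapes. First I would reduce to counting \emph{all} factorizations of the prescribed cycle types, not merely the additive ones. Fix $\sigma=(1\,2\,\cdots\,n)$ and let $C_i$ be the conjugacy class of cycle type $\lambda^{(i)}$, regarded as a partition of $n$; since $\symm_n$ acts as a real reflection group, $\ell_T(\sigma_i)$ equals $n$ minus the number of parts of $\lambda^{(i)}$ for every $\sigma_i\in C_i$ (cf.\ Remark~\ref{r(n, k) vs. Stirling}). As $\ell_T$ is subadditive with $\ell_T(\sigma)=n-1$, every factorization $\sigma=\sigma_1\cdots\sigma_m$ with $\sigma_i\in C_i$ satisfies $n-1\le\sum_i\ell_T(\sigma_i)$, with equality exactly when the factorization is additive. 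Hence, assuming the types are compatible in that $\sum_i\ell_T(\sigma_i)=n-1$ (the only case in which the claimed count is nonzero), the additive factorizations with factors of types $\lambda^{(1)},\dots,\lambda^{(m)}$ coincide with all such factorizations, whose number Frobenius's formula gives as
\[
F=\frac{|C_1|\cdots|C_m|}{n!}\sum_{\chi\in\Irr(\symm_n)}\frac{\chi(C_1)\cdots\chi(C_m)\,\overline{\chi(\sigma)}}{\chi(1)^{m-1}}.
\]

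Next I would collapse the character sum. By the Murnaghan--Nakayama rule an $n$-cycle is a single border strip of size $n$, which can be stripped from $\lambda\vdash n$ only when $\lambda$ is a hook $(n-k,1^k)$; thus $\chi^\lambda(\sigma)=0$ unless $\lambda=(n-k,1^k)$, in which case $\chi^{(n-k,1^k)}(\sigma)=(-1)^k$ and $\chi^{(n-k,1^k)}(1)=\binom{n-1}{k}$. This reduces the sum over $\Irr(\symm_n)$ to one over the $n$ hook shapes,
\[
F=\frac{|C_1|\cdots|C_m|}{n!}\sum_{k=0}^{n-1}\frac{(-1)^k}{\binom{n-1}{k}^{m-1}}\prod_{i=1}^m\chi^{(n-k,1^k)}(C_i).
\]
The hook character values are explicit because $\chi^{(n-k,1^k)}$ is the $k$-th exterior power of the standard reflection character of $\symm_n$: a permutation of cycle type $\mu$ has, as its eigenvalues on the standard representation, all $\mu_j$-th roots of unity taken over the parts $\mu_j$ with one eigenvalue $1$ deleted, so
\[
\sum_{k=0}^{n-1}\chi^{(n-k,1^k)}(\mu)\,t^k=\frac{1}{1+t}\prod_j\bigl(1-(-t)^{\mu_j}\bigr).
\]

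Evaluating the resulting alternating sum over $k$ and simplifying is the crux, and I expect it to be the main obstacle. The natural device is the Beta-integral identity $\binom{n-1}{k}^{-1}=n\int_0^1 x^k(1-x)^{n-1-k}\,dx$, which replaces $\binom{n-1}{k}^{-(m-1)}$ by an $(m-1)$-fold integral; after interchanging sum and integral the $k$-sum becomes a diagonal-type extraction from the product of the generating functions just displayed, which can be carried out by residues. Performing the integrations, restoring the prefactor $|C_1|\cdots|C_m|/n!$, and disentangling the signs, the roots-of-unity products, and the surviving powers of $n$ should leave $n^{m-1}$ times a product of local factors, each evaluating to the combinatorial quantity $N(\lambda^{(i)})$ defined above. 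As a consistency check, in the special case \eqref{cactus-special-case} where each $\lambda^{(i)}$ has a single nontrivial cycle one computes $N(\lambda^{(i)})=1$, recovering the count $n^{m-1}$; at $m=n-1$, with every factor a transposition, this is D\'enes's classical value $n^{n-2}$ for the number of minimal transposition factorizations of an $n$-cycle.

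As an alternative route, and a cross-check on the character computation, one can argue bijectively: additive factorizations of an $n$-cycle are encoded by ``cacti'' --- equivalently, by cycle-type-refined maximal chains below $\sigma$ in the noncrossing partition lattice of $\symm_n$ --- and a multivariate Lagrange inversion, or a weighted version of the matrix-tree theorem, enumerates them directly, with the factors $N(\lambda^{(i)})$ arising from the internal structure of each $\sigma_i$.
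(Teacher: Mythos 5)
Your setup is sound and matches the route the paper itself sketches (Zagier's proof, Steps 1--4, which the paper only carries out for the special case \eqref{cactus-special-case}, citing Goulden--Jackson for the general statement): reducing to all factorizations with prescribed types (correct, since subadditivity of $\ell_T$ forces additivity once $\sum_i\ell_T(\sigma_i)=n-1$), applying Frobenius's formula, and observing that only hook characters survive against an $n$-cycle, with $\chi^{(n-k,1^k)}(\sigma)=(-1)^k$ and $\chi^{(n-k,1^k)}(1)=\binom{n-1}{k}$. Your generating function $\sum_k\chi^{(n-k,1^k)}(\mu)t^k=\frac{1}{1+t}\prod_j\bigl(1-(-t)^{\mu_j}\bigr)$ is also correct.

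The genuine gap is exactly where you say you expect the main obstacle: the evaluation of
\[
\sum_{k=0}^{n-1}\frac{(-1)^k}{\binom{n-1}{k}^{m-1}}\prod_{i=1}^m\chi^{(n-k,1^k)}(C_i)
\]
is never performed. The Beta-integral/residue device is only named, not executed, and it is not clear it closes: after introducing $(m-1)$ integrals you must extract a diagonal from a product of $m$ rational generating functions and then see the factors $N(\lambda^{(i)})=\frac{1}{m_\bullet}\binom{m_\bullet}{m_1,m_2,\ldots}$ emerge, and none of that bookkeeping is done; asserting that the signs, roots-of-unity products, and powers of $n$ ``should leave'' the answer is not a proof. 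The step your plan is missing, and which Zagier's argument supplies, is to absorb the class sizes into normalized class-sum values $\normchi^{(n-k,1^k)}(z_i)=|C_i|\chi^{(n-k,1^k)}(C_i)/\binom{n-1}{k}$ and prove these are polynomials in $k$ (equivalently in $x=k$) of degree $\sum_j(\lambda^{(i)}_j-1)$ with an explicit leading coefficient; then the whole sum has the form $\frac{1}{n!}\sum_{k=0}^{n-1}(-1)^k\binom{n-1}{k}P(k)=\frac{1}{n!}(\Delta^{n-1}P)(0)$ with $\deg P=n-1$, so the finite-difference identity collapses it to $\frac{(n-1)!}{n!}$ times the leading coefficient, which is where $n^{m-1}\prod_iN(\lambda^{(i)})$ comes from. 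Without either that polynomiality-plus-finite-difference argument or a genuinely completed version of your integral computation, the proposal establishes only the reduction, not the formula; the closing paragraph's bijective ``cactus/Lagrange inversion'' alternative is likewise only a pointer to the literature, not an argument.
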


We currently lack a combinatorial proof of Theorem~\ref{thm:flag-f-vector};
see Question~\ref{Biane-type-proof-question}.
Instead, prompted by the similarity between \eqref{q-flag-count} and \eqref{cactus-special-case}, we prove the former by following a $q$-analogue 
of a proof of the latter due to Zagier; see \cite[\S A.2.4]{LandoZvonkin}.  
We sketch here the steps in Zagier's proof and give the $q$-analogous steps in the subsections below.

The first step is the same for both proofs, namely a 
representation-theoretic approach to 
counting factorizations that goes back to Frobenius;  see, e.g.,  \cite[\S A.1.3]{LandoZvonkin} for a proof.

\begin{definition}
Given a finite group $G$, let 
$\Irr(G)$ be the set of its irreducible ordinary (finite-dimensional,
complex) representations $U$.  For each $U$ in $\Irr(G)$, define
its \defn{character} $\chi_U(-)$, \defn{degree} $\chi_U(e)$, and
\defn{normalized character} $\normchi_U(-)$ by
\begin{align*}
\chi_{U}(g)&:=\Tr(g: U \rightarrow U), \\
\chi_U(e)&=\dim_\CC U,\\
\normchi_{U}(g)&:=\frac{\chi_U(g)}{\chi_U(e)}.
\end{align*}
Both functions $\chi_{U}(-), \normchi_{U}(-)$ on $G$
extend $\CC$-linearly to functions on the \defn{group algebra} $\CC[G]$. 
\end{definition}

In the sequel, we will frequently conflate a representation $U$ with its character $\chi_U$ without comment.

\begin{proposition}[Frobenius \cite{Frobenius}]
\label{Frobenius-prop}
Let $G$ be a finite group and let $A_1,\ldots,A_m \subset G$ be
unions of conjugacy classes in $G$. Let $z_i:=\sum_{g_i \in A_i} g_i$ in $\CC[G]$. 
Then for each $g$ in $G$,
\begin{equation}
\label{Chapuy-Stump-varying-class-answer}
|\{(g_1,\ldots,g_m) \in A_1 \times \cdots \times A_m \colon 
  g=g_1 \cdots g_m\}|
=\frac{1}{|G|} \sum_{\chi \in \Irr(G)} 
  \chi(e) \chi(g^{-1}) \prod_{i=1}^m \normchi(z_i).
\end{equation}
\end{proposition}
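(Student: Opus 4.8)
The plan is to prove Proposition~\ref{Frobenius-prop} by a standard argument
in the center of the group algebra $Z(\CC[G])$, using the orthogonal
idempotents attached to irreducible characters. First I would observe that each
$z_i = \sum_{g_i \in A_i} g_i$, being a sum over a union of conjugacy classes,
lies in $Z(\CC[G])$, and that the coefficient of $g$ in the product
$z_1 z_2 \cdots z_m \in \CC[G]$ is exactly the number
$|\{(g_1,\ldots,g_m) \in A_1 \times \cdots \times A_m : g = g_1\cdots g_m\}|$
that we wish to compute. So the task reduces to extracting the coefficient of
$g$ in a central element.

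Next I would recall the primitive central idempotents
$e_\chi := \frac{\chi(e)}{|G|}\sum_{h \in G}\chi(h^{-1})\,h$,
indexed by $\chi \in \Irr(G)$, which satisfy $e_\chi e_\psi = \delta_{\chi\psi}e_\chi$
and $\sum_\chi e_\chi = 1$. Any central element $z$ acts as a scalar on the
irreducible $U_\chi$, and that scalar is precisely $\normchi(z) = \chi(z)/\chi(e)$;
equivalently $z = \sum_\chi \normchi(z)\, e_\chi$ inside $Z(\CC[G])$.
Applying this to $z = z_1 \cdots z_m$ and using multiplicativity of $\normchi$
on central elements (since each acts by a scalar on each irreducible), I get
$z_1\cdots z_m = \sum_{\chi}\left(\prod_{i=1}^m \normchi(z_i)\right) e_\chi$.

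Finally I would extract the coefficient of $g$. From the formula for $e_\chi$,
the coefficient of $g$ in $e_\chi$ is $\frac{\chi(e)}{|G|}\chi(g^{-1})$, so the
coefficient of $g$ in $z_1\cdots z_m$ is
$\frac{1}{|G|}\sum_{\chi\in\Irr(G)}\chi(e)\,\chi(g^{-1})\prod_{i=1}^m \normchi(z_i)$,
which is exactly \eqref{Chapuy-Stump-varying-class-answer}. The only point
requiring a little care—and the one I would state explicitly—is the
justification that $\normchi$ is multiplicative on the commuting central
elements $z_i$, which follows because each $z_i$ acts on the irreducible module
$U_\chi$ by the scalar $\normchi(z_i)$, hence their product acts by the product
of the scalars; the computation of that scalar as $\chi(z_i)/\chi(e)$ is the
trace of the action divided by $\dim U_\chi$. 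This is routine, and indeed the
cited references (\cite[\S A.1.3]{LandoZvonkin}, \cite{Frobenius}) contain full
details, so in the paper I would either give this short argument or simply refer
to them.
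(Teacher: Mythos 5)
Your proof is correct: identifying the count as the coefficient of $g$ in the central element $z_1\cdots z_m$, expanding in the primitive central idempotents $e_\chi$, and using that each central $z_i$ acts on the irreducible $U_\chi$ by the scalar $\normchi(z_i)$ is exactly the standard argument. The paper itself gives no proof of Proposition~\ref{Frobenius-prop}, deferring to the cited references (e.g.\ \cite[\S A.1.3]{LandoZvonkin}), and those references argue along essentially the same lines, so your writeup simply supplies the routine details the paper omits.
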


Zagier's proof of Theorem~\ref{cactus-theorem} applies Proposition~\ref{Frobenius-prop} by following these four steps.

\subsection*{Step 1}
One observes that, when applying  \eqref{Chapuy-Stump-varying-class-answer} to count factorizations of an $n$-cycle in $G=\symm_n$, the summation is much \emph{sparser} than it looks initially.  Irreducible $\symm_n$-characters $\chi^\lambda$ are indexed by partitions $\lambda$ of $n$, but the only $\chi^\lambda$ which do not vanish on  an $n$-cycle $\sigma$ are
the \defn{hook shapes}, i.e., those of the form $\lambda=(n-d,1^d)$ for $d=0,1,\dots,n-1$.  These satisfy
\[
\chi^{(n-d,1^d)}(\sigma) =(-1)^d   \qquad \textrm{ and } \qquad
\chi^{(n-d,1^d)}(e) =\binom{n-1}{d}.
\]
Hence Proposition~\ref{Frobenius-prop} shows that the number of additive factorizations $\sigma=\sigma_1 \cdots \sigma_m$ in which each $\sigma_i$ has cycle type $\lambda^{(i)}$ is
\begin{equation}
\label{Zagier-sparse-expression}
\frac{1}{n!} \sum_{d=0}^{n-1} (-1)^d \binom{n-1}{d}  P(d),
\qquad\text{ where }\quad
P(d):=\prod_{i=1}^m \normchi^{(n-d,1^d)}(z_i)
\end{equation}
and each $z_i$ is the sum in $\CC[\symm_n]$ of all permutations of cycle type $\lambda^{(i)}$.

\subsection*{Step 2}
One shows that each normalized character value $\normchi^{(n-d,1^d)}(z_i)$ appearing as a factor in \eqref{Zagier-sparse-expression} is the specialization at $x=d$ of a polynomial $P_{\lambda^{(i)}}(x)$ in $\QQ[x]$.  This polynomial has degree $\sum_j (\lambda^{(i)}_j-1)$ and a predictable, explicit leading coefficient.  
Thus the product $P(d)$ is also the specialization 
of a polynomial $P(x)$ in $\QQ[x]$, having degree $n - 1$ and a predictable, explicit leading coefficient.

\subsection*{Step 3}
Note that the $N$th iterate $\Delta^N:=\Delta \circ \cdots \circ \Delta$ of the forward difference operator
\begin{equation}
\label{forward-difference}
\Delta(f)(x):=f(x+1)-f(x)
\end{equation}
satisfies
\begin{equation}
\label{forward-difference-iterate}
(\Delta^N f)(x)=\sum_{d=0}^N (-1)^d \binom{N}{d} f(x+d).
\end{equation}
Hence the sum \eqref{Zagier-sparse-expression} 
is the $(n-1)$st forward difference of $P(x)$ evaluated at $x = 0$, that is,
$
(\Delta^{n-1}P)(0).
$

\subsection*{Step 4}
For each integer $m \geq 0$ one has
\[
\Delta(x^m)=(x+1)^m-x^m=mx^{m-1}+O(x^{m-2}),
\]
and so the operator $\Delta$ lowers degree by $1$ and scales by $m$
the leading coefficient of a degree-$m$ polynomial.  Hence the polynomial $P(x)$ from Step 2 has
$\Delta^{n-1}P=(\Delta^{n-1}P)(0)$ equal to
a constant, namely $(n-1)!$ times the leading coefficient of $P(x)$. 
Thus our answer \eqref{Zagier-sparse-expression}, which is equal to
$\frac{1}{n!} (\Delta^{n-1}P)(0)$ by Step 3, is $\frac{(n-1)!}{n!}=\frac{1}{n}$ times
the leading coefficient of $P(x)$ computed in Step 2.

\medskip

In the next four subsections, we describe what we view as $q$-analogues of
Steps 1, 2, 3, 4, in order to prove Theorem~\ref{thm:flag-f-vector}.
As a preliminary step, take $\GL_n:=\GL_n(\Fq)$, 
acting on $V = \Fq^n$, and define
for $k=0,1,\ldots,n$ the element $z_k$ 
in $\CC[\GL_n]$ to be the sum of all elements $g$ for
which $\codim(V^g)=k$.  Then 
Definition~\ref{flag-f-definition} and Proposition~\ref{Frobenius-prop} show that
\begin{equation}
\label{chain counting equation}
 f_\alpha[e,c] = \frac{1}{|\GL_n|} \sum_{\chi \in \Irr(\GL_n)} \chi(e) \chi(c^{-1}) 
\prod_{i=1}^m \normchi(z_{\alpha_i}).
\end{equation}

\subsection{A $q$-analogue of Step 1.}

Just as in Step 1 above, one observes that for a regular elliptic element $c$ in $\GL_n$, the summation  \eqref{chain counting equation} is much {\it sparser} than it looks initially, as many $\GL_n$-irreducibles have
$\chi(c^{-1})=0$.

To explain this, we begin with a brief outline of some of the theory of complex characters of $\GL_n(\FF_q)$.  The theory was first developed by J.A. Green \cite{Green}, building on R. Steinberg's work \cite{Steinberg} constructing the unipotent characters $\chi^{\one,\lambda}$.   It has been reworked several times, e.g., by  Macdonald \cite[Chs.~III, IV]{Macdonald} and Zelevinsky \cite[\S 11]{Zelevinsky}.

\begin{definition}
A key notion is the {\it parabolic} or {\it Harish-Chandra induction}  $\chi_1 \hcprod \chi_2$ of
two characters $\chi_1, \chi_2$ for $\GL_{n_1}, \GL_{n_2}$ to give a character of
$\GL_n$ where $n=n_1+n_2$.  To define it, introduce the parabolic subgroup 
\begin{equation}
\label{parabolic-definition}
P_{n_1,n_2}:=\left\{
\left[
\begin{matrix} A_1 & B\\ 0 & A_2 \end{matrix}
\right] \text{ in }\GL_n
\right\}
\end{equation}
so that $A_i$ lies in $\GL_{n_i}$ for $i=1,2$, and $B$ is arbitrary in $\FF_q^{n_1 \times n_2}$.  Then
\begin{equation}                                                                                                   
\label{parabolic-induction-formula}                                                                                
(\chi_1 \hcprod \chi_2)(g):=                                              
\frac{1}{|P_{n_1,n_2}| }
\sum_{
 \substack{h \in G\colon\\ hgh^{-1} \in P_{n_1,n_2}}}
\chi_1(A_1) \chi_2(A_2),
\end{equation}
where the element $hgh^{-1}$ of $P_{n_1,n_2}$ has
 diagonal blocks labeled $A_1,A_2$ as above.
Said differently, 
$
\chi_1 \hcprod \chi_2
  :=\left( \chi_1 \otimes \chi_2 \right) \Uparrow^{P_{n_1,n_2}}_{\GL_{n_1} \times \GL_{n_2}}
      \uparrow_{P_{n_1,n_2}}^{\GL_n}
$
where
\begin{itemize}
\item
$(-)\Uparrow_{\GL_{n_1} \times \GL_{n_2}}^{P_{n_1,n_2}}$ is {\it inflation} of
representations of $\GL_{n_1} \times \GL_{n_2}$ into those of $P_{n_1,n_2}$,
by precomposing with the surjection 
$P \twoheadrightarrow \GL_{n_1} \times \GL_{n_2}$, and
\item
$(-)\uparrow_{P_{n_1,n_2}}^{\GL_n}$ is {\it induction} of representations.
\end{itemize} 
\end{definition}

The binary operation $(\chi_1,\chi_2) \longmapsto \chi_1 \hcprod \chi_2$ turns out \cite[Ch.~III]{Zelevinsky} to define an associative, commutative (!), graded $\CC$-algebra structure on $\bigoplus_{n \geq 0} \Class(\GL_n)$, 
where $\Class(\GL_n)$ denotes the $\CC$-vector space of 
class functions on $\GL_n$, with $\Class(\GL_0):=\CC$.  

\begin{definition}
 An irreducible $U$ in $\Irr(\GL_n)$ is called \defn{cuspidal}, with \defn{weight} $\wt(U)=n$, if $U$ is not a constituent of any proper induction $\chi_1 \hcprod \chi_2$ for characters $\chi_i$ of $\GL_{n_i}$ with $n=n_1+n_2$ and $n_1, n_2 \geq 1$. 

Denote by $\Cusp_n$ the set of weight-$n$ cuspidal characters, and $\Cusp:=\sqcup_{n \geq 0} \Cusp_n$.  
\end{definition}

\begin{definition}
An irreducible $\GL_n$-character is called \defn{primary} to the cuspidal $U$ if $\chi$ \emph{does} occur as an irreducible constituent of some product $U^{\hcprod{\frac{n}{s}}}=U \hcprod U \hcprod \cdots \hcprod U$, where $\wt(U)=s$.
\end{definition}

\noindent
It turns out that one can parametrize the irreducible $\GL_n$-characters 
primary to the cuspidal $U$ as $\{ \chi^{U,\lambda}: |\lambda|=\frac{n}{s} \}$,
parallel to the parametrization of the irreducible $\symm_n$-characters
as $\{ \chi^\lambda: |\lambda| = n \}$.  In fact, two primary 
irreducibles $\chi^{U,\mu}, \chi^{U,\nu}$ for $\GL_{n_1},\GL_{n_2}$
primary to the same cuspidal $U$ have product controlled by
the usual \emph{Littlewood--Richardson coefficients}:
\[
\chi^{U,\mu} \hcprod \chi^{U,\nu} 
  = \sum_\lambda c_{\mu,\nu}^\lambda \chi^{U,\lambda}
\quad \text{ where }\quad
(\chi^{\mu} \otimes \chi^{\nu} )
  \uparrow_{ \symm_{|\mu|} \times \symm_{|\nu|} }^{\symm_{|\mu|+|\nu|}}
  = \sum_\lambda c_{\mu,\nu}^\lambda \chi^{\lambda}.
\]
Furthermore, the set of \emph{all} irreducibles
$\Irr(\GL_n)$ can be indexed as $\{ \chi^{\llambda} \}$ in
which $\llambda$ runs through the
functions $\llambda:U \longmapsto \lambda(U)$
from $\Cusp$ to all integer partitions,
subject to the restriction
$\sum_U \wt(U) \cdot |\lambda(U)|=n$.
In this parametrization,
\[
\chi^{\llambda} =  \chi^{U_1,\lambda(U_1)}\hcprod \cdots \hcprod\chi^{U_m,\lambda(U_m)}
\]
if $\{U_1,\ldots,U_m\}$ are the cuspidals having $\lambda(U_i) \neq \varnothing$.

We next recall from \cite{LRS} the sparsity statement analogous to that of Step 1, showing that most irreducible $\GL_n$-characters $\chi$ vanish on a regular elliptic element.  We also include the character values and a degree formula for certain irreducibles that arise in our computation.

\begin{proposition}[{\cite[Prop.~4.7]{LRS}}]
\label{Singer-cycle-character-values}
Let $c$ in $\GL_n$ be regular elliptic, e.g., a Singer cycle. 
\begin{compactenum}[(i)]
\item The irreducible character $\chi^{\llambda}$ has vanishing 
value $\chi^{\llambda}(c) = 0$ unless $\chi$ is a primary irreducible
$\chi^{U,\lambda}$ for some cuspidal $U$ with $\wt(U)=s$ dividing $n$,
and  $\lambda = \hook{d}{\frac{n}{s}}$ is a hook-shaped partition of $\frac{n}{s}$.  
\item If $U = \one=\one_{\GL_1}$ is the trivial cuspidal with $s=\wt(U)=1$, then 
\[
\chi^{\one, \hook{d}{n}}(c) = (-1)^d \qquad \textrm{ and } \qquad
\chi^{\one, \hook{d}{n}}(e) = q^{\binom{d+1}{2}} \qbin{n-1}{d}{q}.
\]
\end{compactenum}
\end{proposition}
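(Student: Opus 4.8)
The plan is to deduce both parts from the combinatorial parametrization of $\Irr(\GL_n)$ recalled above, together with two classical ingredients: the vanishing of an induced character away from the conjugates of the inducing subgroup, and the Newton/Murnaghan--Nakayama identity $p_k=\sum_{d=0}^{k-1}(-1)^d s_{\hook{d}{k}}$ in the ring of symmetric functions. The single geometric fact we need about a regular elliptic $c$ is that $c$, and hence every conjugate of $c$, lies in \emph{no} proper parabolic subgroup of $\GL_n$: a proper parabolic $P_{n_1,\dots,n_m}$ stabilizes a proper nonzero subspace of $V$, whereas $c$ stabilizes none.

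\emph{Reduction of (i) to one cuspidal.} Write $\chi^{\llambda}=\chi^{U_1,\lambda(U_1)}\hcprod\cdots\hcprod\chi^{U_m,\lambda(U_m)}$, where $U_1,\dots,U_m$ are the distinct cuspidals with $\lambda(U_i)\neq\varnothing$. If $m\geq 2$, then by transitivity of Harish--Chandra induction and \eqref{parabolic-induction-formula}, $\chi^{\llambda}=\Ind_{P}^{\GL_n}$ of an inflation to the \emph{proper} parabolic $P=P_{n_1,\dots,n_m}$, where $n_i=\wt(U_i)\,|\lambda(U_i)|$. Since the character of an induced representation vanishes on any element no conjugate of which lies in the inducing subgroup, and no conjugate of $c$ lies in $P$, we conclude $\chi^{\llambda}(c)=0$. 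Thus part (i) reduces to the case $\chi=\chi^{U,\lambda}$ primary to a single cuspidal $U$, with $s:=\wt(U)\mid n$ and $|\lambda|=n/s=:k$.

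\emph{The single-cuspidal case (the crux).} Fix such a $U$ and set $v_\lambda:=\chi^{U,\lambda}(c)$ for $\lambda\vdash k$, extended to a linear functional $v$ on the degree-$k$ component $\Lambda_k$ of $\Lambda=\bigoplus_j\Lambda_j$ by $v(s_\lambda):=v_\lambda$. For partitions $\mu,\nu$ with $|\mu|,|\nu|\geq 1$ and $|\mu|+|\nu|=k$, the product $\chi^{U,\mu}\hcprod\chi^{U,\nu}=\sum_\lambda c^{\lambda}_{\mu,\nu}\chi^{U,\lambda}$ is Harish--Chandra induced from the proper parabolic $P_{s|\mu|,\,s|\nu|}$, hence vanishes at $c$ by the previous paragraph; therefore $v(s_\mu s_\nu)=0$. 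The products $s_\mu s_\nu$ with $\mu,\nu$ nonempty span the degree-$k$ part of the square of the augmentation ideal of $\Lambda$, which has codimension $1$ in $\Lambda_k$ (the quotient is spanned by $p_k$, as $\Lambda\cong\QQ[p_1,p_2,\dots]$), and the expansion $s_\lambda=\sum_{\mu\vdash k}z_\mu^{-1}\chi^\lambda_\mu\,p_\mu$ shows $s_\lambda\equiv k^{-1}\chi^\lambda_{(k)}\,p_k$ modulo that subspace, where $\chi^\lambda_{(k)}$ is the value of the $\symm_k$-irreducible $\chi^\lambda$ on a $k$-cycle. Since $v$ annihilates the subspace, $v_\lambda=k^{-1}\chi^\lambda_{(k)}\,v(p_k)$; evaluating at $\lambda=(k)$ (where $\chi^{(k)}_{(k)}=1$) gives $k^{-1}v(p_k)=\chi^{U,(k)}(c)$, so
\[
\chi^{U,\lambda}(c)=\chi^\lambda_{(k)}\cdot\chi^{U,(k)}(c).
\]
By Murnaghan--Nakayama, $\chi^\lambda_{(k)}$ vanishes unless $\lambda=\hook{d}{k}$, in which case it equals $(-1)^d$; this proves part (i), and reduces part (ii) to evaluating $\chi^{U,(k)}(c)$.

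\emph{Part (ii) and the main obstacle.} For $U=\one$ one has $s=1$, $k=n$, and $\chi^{\one,(n)}$ is the trivial character of $\GL_n$, so $\chi^{\one,(n)}(c)=1$ and hence $\chi^{\one,\hook{d}{n}}(c)=(-1)^d$. The degree $\chi^{\one,\hook{d}{n}}(e)$ is the dimension of a unipotent character of $\GL_n(\FF_q)$; I would read it off the classical $q$-hook-length formula of Steinberg and Green, $\dim\chi^{\one,\lambda}=q^{n(\lambda)}(q;q)_n\big/\prod_{c\in\lambda}(q^{h(c)}-1)$ with $n(\lambda)=\sum_i(i-1)\lambda_i$, specialized to the hook $\lambda=\hook{d}{n}$: there $n(\lambda)=\binom{d+1}{2}$, the hook lengths are $n$ (the corner), $n{-}d{-}1,\dots,1$ (rest of the first row), and $d,\dots,1$ (rest of the first column), and the resulting quotient collapses to $q^{\binom{d+1}{2}}\qbin{n-1}{d}{q}$. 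I expect the only real obstacle to be bookkeeping in the single-cuspidal step: one must recognize the Harish--Chandra product relations precisely as the statement that $v$ kills the decomposable symmetric functions of degree $k$, and track the normalizing constant so that $p_k=\sum_d(-1)^d s_{\hook{d}{k}}$ delivers exactly the sign $(-1)^d$ with no spurious factor. (One could instead run this step through Deligne--Lusztig theory, using that $c$ lies in a Coxeter-type maximal torus whose Deligne--Lusztig character decomposes into hook constituents with signs $(-1)^d$; the argument above avoids that machinery.)
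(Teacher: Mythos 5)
Your argument is correct, and it is genuinely different in route from what this paper does: here the proposition is not proved at all but imported by citation from Lewis--Reiner--Stanton \cite{LRS}, where the values are extracted from the classical $\GL_n$ character theory (Green/Macdonald-style computations via the characteristic map, together with the known degree formula), rather than by the kind of self-contained reduction you give. Your proof uses only ingredients already quoted in this paper plus standard symmetric function theory: (a) a parabolically induced character is supported on conjugates of the parabolic, and a regular elliptic $c$ has no conjugate in a proper parabolic, which kills all $\chi^{\llambda}$ with two or more cuspidals in their support and, more importantly, shows that the functional $v(s_\lambda):=\chi^{U,\lambda}(c)$ on $\Lambda_{n/s}$ annihilates all products $s_\mu s_\nu$ with $\mu,\nu\neq\varnothing$ (via the Littlewood--Richardson structure of $\chi^{U,\mu}\hcprod\chi^{U,\nu}$ stated in the paper); (b) modulo decomposables $s_\lambda\equiv \tfrac{1}{k}\chi^\lambda_{(k)}p_k$, which yields $\chi^{U,\lambda}(c)=\chi^\lambda_{(k)}\cdot\chi^{U,(k)}(c)$ and hence the hook support and the sign $(-1)^d$ once one knows $\chi^{\one,(n)}$ is the trivial character (consistent with the conventions used in the paper's appendix); (c) the $q$-hook-length formula for the unipotent degree, which indeed collapses to $q^{\binom{d+1}{2}}\qbin{n-1}{d}{q}$ for $\hook{d}{n}$. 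What your route buys is an elementary, essentially convention-free derivation of the vanishing and of the proportionality $\chi^{U,\lambda}(c)=\chi^{\lambda}_{(k)}\chi^{U,(k)}(c)$ (a statement stronger than part (i)), at the cost of still needing two external classical inputs for part (ii): the identification of $\chi^{\one,(n)}$ with $\one_{\GL_n}$ and the degree formula. One small bookkeeping point: as written, $q^{n(\lambda)}(q;q)_n\big/\prod_{c}(q^{h(c)}-1)$ mixes sign conventions (both products should be taken either as $\prod(q^j-1)$ or as $\prod(1-q^j)$); since $\lambda$ has $n$ cells the signs cancel and your final evaluation is right, but the formula should be stated consistently.
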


\subsection{A $q$-analogue of Step 2.}

Of course, to use \eqref{chain counting equation} we also 
need some character values on the elements $z_k$.  These are provided by
the following remarkable result, which was suggested by computations in GAP \cite{GAP}.  
Its proof is deferred to Appendix~\ref{technical-proof}.

\begin{proposition}
\label{character values prop}
One has these normalized character values on $z_k$ for certain $\normchi^{U,\lambda}$.
\begin{compactenum}[(i)]
\item
For any primary irreducible $\GL_n$-character $\chi^{U,\lambda}$ with
the cuspidal $U \neq \one$ nontrivial,
\[
\normchi^{U, \lambda}(z_k) = (-1)^k q^{\binom{k}{2}} \qbin{n}{k}{q}.
\]
\item
For $U = \one$ and $\lambda = \hook{d}{n}$ a hook, we have
\[
\normchi^{\one, \hook{d}{n}}(z_k) =\P_k(q^{-d})
\]
where $\P_k(x)$ is the following polynomial in $x$ of degree $k$:
\begin{equation}
\label{definition-of-P}
\P_k(x) := (-1)^k q^{\binom{k}{2}} \left(
\qbin{n}{k}{q} + 
\frac{1 - q^n}{[n - k]!_q}  
\sum_{j = 1}^k \frac{[n - j]!_q}{[k - j]!_q} q^{j(n - k)} x \cdot (x q^{n - j + 1}; q)_{j - 1}
\right).
\end{equation}
\end{compactenum}
\end{proposition}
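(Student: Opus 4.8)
The plan is to compute $\normchi^{U,\lambda}(z_k)$ by decomposing the class sum $z_k$ along the orbits of $\GL_n$ on pairs $(g, W)$ with $W = V^g$ of codimension $k$, and then to exploit the Harish-Chandra structure of the characters $\chi^{U,\lambda}$. First I would observe that $z_k = \sum_{\codim W = k} \, z_k^W$, where $z_k^W$ is the sum of all $g$ with $V^g = W$ exactly; since the stabilizer of a fixed $W$ acts transitively on the set of such $W$'s, it suffices by conjugation-invariance of $\normchi$ to pick one distinguished $W_0$ (say spanned by the last $n-k$ basis vectors) and write $\normchi^{U,\lambda}(z_k) = \qbin{n}{k}{q}\, \normchi^{U,\lambda}(z_k^{W_0})$. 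The elements $g$ with $V^g = W_0$ are exactly those of the block form appearing in the proof of Proposition~\ref{length=codim}, namely $g = \left[\begin{smallmatrix} A & 0 \\ B & \one_{n-k}\end{smallmatrix}\right]$ with $A \in \GL_k$ having no nonzero fixed vector (i.e.\ $A - \one_k$ invertible, equivalently $\det(A-\one)\neq 0$) and $B \in \FF_q^{(n-k)\times k}$ arbitrary.

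The key structural input is that for a primary character $\chi^{U,\lambda}$, Harish-Chandra restriction to the parabolic $P_{k, n-k}$ (with Levi $\GL_k \times \GL_{n-k}$) is controlled by the branching rule: when $U \neq \one$, $\chi^{U,\lambda}$ restricted to $\GL_k \times \GL_{n-k}$ never sees the trivial character of $\GL_{n-k}$ unless $\wt(U)$ constraints force contributions, and the character value on the ``unipotent-in-the-second-block'' elements $\left[\begin{smallmatrix} A & 0 \\ B & \one\end{smallmatrix}\right]$ reduces — via the standard formula for character values on elements whose semisimple part is supported on a Levi — to a sum of values $\chi_1(A)$ weighted appropriately, where $\chi_1$ ranges over Harish-Chandra components in $\GL_k$. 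The case distinction in the statement then mirrors the case distinction in Proposition~\ref{Singer-cycle-character-values}: for $U \neq \one$ the relevant inner sum telescopes to give simply $(-1)^k q^{\binom{k}{2}}$ after dividing by the degree, because the only surviving component is again primary to $U$ and one can induct on $n$; for $U = \one$ and $\lambda$ a hook, the component structure of $\chi^{\one, \hook{d}{n}}$ under Harish-Chandra restriction produces a genuinely $d$-dependent sum, which one recognizes as the polynomial $\P_k(q^{-d})$ of \eqref{definition-of-P}. Concretely, I would evaluate $\sum_{A} \chi^{\one,\hook{d}{k}}(A)$ over $A \in \GL_k$ with $A - \one$ invertible, use Proposition~\ref{Singer-cycle-character-values}(ii) recursively together with the known count $|\{A \in \GL_k : \det(A - \one) \neq 0\}|$, and assemble the result; the appearance of $[n-j]!_q / [k-j]!_q$ and the $q$-shifted factorial $(xq^{n-j+1}; q)_{j-1}$ should fall out of iterating the branching $\hook{d}{n} \mapsto \hook{d}{n-1} \cup \hook{d-1}{n-1}$ that governs how a hook restricts from $\GL_n$ to $\GL_{n-1}$.

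The alternative — and perhaps cleaner — route, which I would pursue in parallel, is to verify the formula directly on a character-theoretic generating identity: since $\sum_k z_k t^k$ is, up to normalization, related to the ``characteristic-polynomial-of-$(g - \one)$'' statistic, one can try to recognize $\sum_k \normchi^{U,\lambda}(z_k)\, t^k$ as a product formula obtained by plugging the reflection-type class sums into the Hopf-algebra structure on $\bigoplus_n \Class(\GL_n)$; the reflections themselves are the $k=1$ case and are well understood, and the higher $z_k$ are not class sums of a single class but are still ``Harish-Chandra polynomial'' in a sense that makes $\normchi^{U,\lambda}$ computable by the Murnaghan–Nakayama-type rule for $\GL_n(\FF_q)$. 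The main obstacle, in either approach, is bookkeeping the exact powers of $q$: the degree formula $\chi^{\one,\hook{d}{n}}(e) = q^{\binom{d+1}{2}}\qbin{n-1}{d}{q}$ must be tracked through every branching step, and matching the resulting rational function in $q$ and $q^{-d}$ against the closed form \eqref{definition-of-P} is delicate — this is presumably why the proof is deferred to Appendix~\ref{technical-proof}. I would expect to need the $q$-binomial theorem \eqref{q-binomial-theorem} and the $q$-Vandermonde identity repeatedly to collapse the double sums that arise, and a careful induction on $n$ (with $k$ fixed, or on $k$ with $n$ fixed) to close the argument.
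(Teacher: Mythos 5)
There is a genuine gap, and it begins with a concrete error. For $g=\left[\begin{smallmatrix} A & 0 \\ B & \one_{n-k}\end{smallmatrix}\right]$ acting on $(x,y)$ by $(Ax,\,Bx+y)$, one has $V^g = W_0$ exactly when $\ker(A-\one_k)\cap\ker(B)=0$, \emph{not} when $A-\one_k$ is invertible. Already for $k=1$ your parametrization misses every transvection $\left[\begin{smallmatrix} 1 & 0 \\ b & \one_{n-1}\end{smallmatrix}\right]$ with $b\neq 0$, so the plan to evaluate $\sum_A \chi^{\one,\hook{d}{k}}(A)$ over $A$ with $\det(A-\one)\neq 0$ and $B$ arbitrary drops genuine contributions to $z_k$. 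Moreover, even for the elements you do keep (where $g$ is conjugate to $\operatorname{diag}(A,\one_{n-k})$, since $C=-B(A-\one)^{-1}$ conjugates away $B$), there is no off-the-shelf ``standard formula'' reducing $\chi^{U,\lambda}$ at such elements to hook-branching data: evaluating a general primary irreducible at $\operatorname{diag}(A,\one_{n-k})$ for arbitrary $A\in\GL_k$ is Green's-theory-level work, and your sketch does not say how these values, summed over $A$, would be controlled. The alternative generating-function/Hopf-algebra route is stated only as an intention, with no identity actually proposed.

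The paper's proof is organized precisely so that no character value at a non-identity element is ever needed. After the reduction $\normchi(z_k)=\qbin{n}{k}{q}\,F(X)$ with $F(X)=\sum_{g:\,V^g=X}\normchi(g)$ (which you do have), it does not attack $F(X)$ directly; it M\"obius-inverts over the subspace lattice to express $F$ through $G(Y)=\sum_{g:\,V^g\supseteq Y}\normchi(g)$, and the key point is that $\{g:V^g\supseteq Y\}$ is a \emph{subgroup} $H$. Frobenius reciprocity gives $G(Y)=\frac{|H|}{\chi(e)}\langle\chi,\one_H\uparrow^{\GL_n}_H\rangle$, and the crucial identification $\one_H\uparrow^{\GL_n}_H=\CC\GL_{n-k}\hcprod\one_{\GL_k}$ (Harish--Chandra induction of the regular representation against a trivial character) turns everything into multiplicities and degrees. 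Assertion (i) then falls out because every constituent of $\chi^{\llambda}\hcprod\chi^{\one,(k)}$ assigns the cuspidal $\one$ a partition of weight at least $k\geq 1$, hence is never primary to $U\neq\one$, so only the $j=0$ term of the M\"obius sum survives; assertion (ii) uses Pieri's rule to see that exactly the two hooks $\hook{d}{n-k}$ and $(n-k-d+1,1^{d-1})$ contribute, and the hook degree formula plus $q$-series manipulation produces $\P_k(q^{-d})$. Your proposal contains neither the M\"obius-inversion-to-a-subgroup step nor the regular-representation identification, and without them (or a corrected description of $\{g:V^g=W_0\}$ together with a usable character formula on that set) the argument does not close.
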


\subsection{A $q$-analogue of Step 3.}

We are now well-equipped to analyze the summation in \eqref{chain counting equation} by breaking it into two pieces:
\begin{equation}
\label{A+B-expression}
f_\alpha[e,c] 
 = \frac{1}{|\GL_n|} \sum_{\substack{\chi \in \Irr(\GL_n)\colon\\ \chi(c^{-1}) \neq 0}} 
\chi(e) \chi(c^{-1}) 
\prod_{i=1}^m \normchi(z_{\alpha_i}) 
  = \frac{1}{|\GL_n|} (A + B) 
\end{equation}
where  
$A$ is the sum over primary irreducibles $\chi^{U,\lambda}$ with $U \neq \one =\one_{\GL_1}$ and
 $B$ is the sum over primary irreducibles of the form
$\chi^{\one,\lambda}$.  By Proposition~\ref{character values prop}(i), one has
\[
A=\prod_{i=1}^{m} (-1)^{\alpha_i} q^{\binom{\alpha_i}{2}} \qbin{n}{\alpha_i}{q}
   \;  
\sum_{\substack{\chi^{U,\lambda} \in \Irr(\GL_n)\colon\\ U \neq \one}} \chi^{U,\lambda}(e) \chi^{U,\lambda}(c^{-1}).
\]
However, Proposition~\ref{Singer-cycle-character-values}(i) lets one rewrite 
this last summation as
\[
 \sum_{\substack{\chi^{U,\lambda} \in \Irr(\GL_n)\colon\\ U \neq \one}} \chi^{U,\lambda}(e) \chi^{U,\lambda}(c^{-1})
 =
\sum_{\substack{\chi \in \Irr(\GL_n)\colon\\ \chi(c^{-1}) \neq 0}} 
\chi(e) \chi(c^{-1})
-
\sum_{d=0}^{n-1} \chi^{\one,(n-d,1^d)}(e) \chi^{\one,(n-d,1^d)}(c^{-1}).
\]
The first sum on the right side is the character of the \emph{regular representation} for $\GL_n$ evaluated at $c$, and hence is equal to $0$.  By  Proposition~\ref{Singer-cycle-character-values}(ii) and
 the $q$-binomial theorem \eqref{q-binomial-theorem}, the second sum on the right side is
\[
\sum_{d=0}^{n-1} (-1)^d q^{\binom{d+1}{2}} \qbin{n-1}{d}{q} = (q;q)_{n-1}.
\]
Thus one concludes that
\begin{equation}
\label{final-A-expression}
A= -(q;q)_{n-1} 
     \prod_{i=1}^{m} (-1)^{\alpha_i} q^{\binom{\alpha_i}{2}} \qbin{n}{\alpha_i}{q}.
\end{equation} 

Next we analyze the sum $B$ in \eqref{A+B-expression}.  For a composition $\alpha$, define
$
\P_\alpha(x) = \prod_i \P_{\alpha_i}(x).
$
By Propositions~\ref{Singer-cycle-character-values} and~\ref{character values prop} and the definition of $B$, we may rewrite
\begin{equation}
\label{first-expression-for-B}
B=
\sum_{d = 0}^{n - 1} (-1)^d q^{\binom{d + 1}{2}} \qbin{n - 1}{d}{q} \P_{\alpha}(q^{-d}).
\end{equation}
We identify $B$ in terms of the $(n-1)$st iterate of a 
$q$-difference operator $\Delta_q$.  This operator is the $q$-analogue 
of \eqref{forward-difference} defined by 
\[
\Delta_q(f)(x) = \frac{f(qx)-f(x)}{qx - x}
  =\frac{f(qx)-f(x)}{(q-1) x}.
\] 

One can check via the $q$-Pascal recurrence
\[
\qbin{N}{d}{q} = \qbin{N-1}{d}{q}+q^{N-d}\qbin{N-1}{d-1}{q}
\]
and induction that for $N \geq 0$, the $N$th iterate
$\Delta^N_q=\Delta_q \circ \cdots \circ \Delta_q$ has the following
expression:
\begin{equation}
\label{iterate-of-q-difference}
 \Delta_q^{N}(f)(x) = q^{-\binom{N}{2}} (q - 1)^{-N} 
    \sum_{d= 0}^{N}(-1)^{d} q^{\binom{d}{2}} \qbin{N}{d}{q} \frac{ f(q^{N - d}x) }{x^N}.
\end{equation}
(This is $q$-analogous to \eqref{forward-difference-iterate}.)  Taking $N = n - 1$ in \eqref{iterate-of-q-difference} and applying the operator to $\P_{\alpha}(x)/x$ gives
\begin{align*}
\left[
 \Delta_q^{n - 1}\left( \frac{ \P_{\alpha}(x)}{x} \right) 
\right]_{x = q^{1 -n}}
 &= 
 q^{-\binom{n-1}{2}} (q - 1)^{1-n} 
    \sum_{d= 0}^{n-1}(-1)^{d} q^{\binom{d}{2}} \qbin{n-1}{d}{q}
   \left[
    \frac{1}{x^{n-1}}
     \frac{ \P_\alpha(q^{n-1 - d}x)}{(q^{n-1-d}x) }
   \right]_{x = q^{1 -n}} \\
&= 
 q^{-\binom{n-1}{2}+(n-1)^2} (q - 1)^{1-n} 
    \sum_{d= 0}^{n-1}(-1)^{d} q^{\binom{d+1}{2}} \qbin{n-1}{d}{q}
      \P_\alpha(q^{- d}).
\end{align*}
Combining with \eqref{first-expression-for-B} gives
\begin{equation}
\label{second-expression-for-B}
B
=(q - 1)^{n - 1} q^{-\binom{n}{2}} 
  \left[ 
   \Delta_q^{n - 1}\left( \frac{\P_{\alpha}(x)}{x} \right)
  \right]_{x = q^{1 -n}}.
\end{equation}

\subsection{A $q$-analogue of Step 4.}

We process the expression \eqref{second-expression-for-B} for $B$ further.  It is easily verified by induction on $N \geq 0$ that
 for any $m$, 
\[
\Delta_q^N(x^m) 
= \frac{(q^m-1)(q^{m-1} -1) \cdots (q^{m-N+1}-1)}{(q-1)^N} \cdot x^{m - N}
= \frac{(q^{m - N+1} ; q)_N}{(1-q)^N} \cdot x^{m - N}.
\] 
In particular, for integer $m$ one has
\begin{equation}
\label{relevant-q-diffs}
\Delta_q^N(x^m)=
\begin{cases}
0 & \text{  if } N > m \geq 0,\\
[m]!_q  & \text{  if } N = m \geq 0,\\
(-1)^N q^{\binom{N+1}{2}} [N]!_q \cdot x^{-N-1} 
  & \text{  if } m =-1.
\end{cases}
\end{equation}

\begin{proposition}
\label{boundary-terms-of-P}
For any composition
$\alpha=(\alpha_1,\ldots,\alpha_m)$ of $n$, 
the function $\P_\alpha(x)=\prod_{i} \P_{\alpha_i}(x)$ 
\begin{itemize}
\item is a polynomial in $x$ of degree $n$,
\item has leading coefficient equal to 
    $\displaystyle  q^{\varepsilon(\alpha)+n(n-1)} \cdot (q^n-1)^m$, and
\item has constant coefficient equal to $-A/ (q;q)_{n-1}$.
\end{itemize}
\end{proposition}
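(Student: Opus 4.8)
The plan is to prove the three bullet points by first analyzing the single-variable factor $\P_k(x)$ from \eqref{definition-of-P} and then taking the product over the parts $\alpha_i$ of $\alpha$.

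First I would pin down the degree and leading coefficient of $\P_k(x)$. In \eqref{definition-of-P}, each $q$-Pochhammer $(xq^{n-j+1};q)_{j-1}$ is a polynomial in $x$ of degree $j-1$, so the term $x\cdot(xq^{n-j+1};q)_{j-1}$ has degree $j$ and the inner sum $\sum_{j=1}^{k}$ has degree $k$; hence $\deg\P_k\le k$ and only the $j=k$ summand contributes to the coefficient of $x^k$. Reading off the leading coefficient of $(xq^{n-k+1};q)_{k-1}$, namely $(-1)^{k-1}q^{\binom{k-1}{2}+(n-k+1)(k-1)}$, and substituting into the $j=k$ term (the factors $[n-k]!_q$ cancel, $(-1)^k(-1)^{k-1}=-1$, and $\binom{k}{2}+\binom{k-1}{2}=(k-1)^2$), one obtains that the coefficient of $x^k$ in $\P_k(x)$ equals $(q^n-1)\,q^{\,k(k-1)+(2k-1)(n-k)}$. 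This is nonzero (as $q>1$), so $\deg\P_k=k$ exactly; consequently $\P_\alpha(x)=\prod_i\P_{\alpha_i}(x)$ is a polynomial of degree $\sum_i\alpha_i=n$ with leading coefficient $(q^n-1)^m\,q^{\,\sum_i[\alpha_i(\alpha_i-1)+(2\alpha_i-1)(n-\alpha_i)]}$. It then remains a routine identity to check that $\sum_i[\alpha_i(\alpha_i-1)+(2\alpha_i-1)(n-\alpha_i)]=\varepsilon(\alpha)+n(n-1)$: expanding each summand on the left as $2n\alpha_i-\alpha_i^2-n$ and using $\sum_i\alpha_i=n$ together with the fact that $\alpha$ has $m$ parts gives $2n^2-\sum_i\alpha_i^2-mn$, and the identical expansion applied to $\varepsilon(\alpha)=\sum_i(\alpha_i-1)(n-\alpha_i)$ gives $\varepsilon(\alpha)+n(n-1)=2n^2-\sum_i\alpha_i^2-mn$ as well.

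For the constant coefficient, set $x=0$ in \eqref{definition-of-P}: the entire inner sum vanishes (every term carries a factor $x$), leaving $\P_k(0)=(-1)^kq^{\binom{k}{2}}\qbin{n}{k}{q}$, so $\P_\alpha(0)=\prod_i(-1)^{\alpha_i}q^{\binom{\alpha_i}{2}}\qbin{n}{\alpha_i}{q}$. Comparing directly with the formula \eqref{final-A-expression} for $A$ shows $\P_\alpha(0)=-A/(q;q)_{n-1}$, as claimed.

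I do not expect any genuine obstacle here: the whole proposition is a bookkeeping exercise once $\P_\alpha$ is recognized as a product of the $\P_{\alpha_i}$. The one place requiring care is correctly extracting the top-degree term of $\P_k(x)$ and tracking the resulting powers of $q$ through the simplification; the verification that the leading coefficient is nonzero is immediate since $q>1$, so $\deg\P_\alpha=n$ really does hold.
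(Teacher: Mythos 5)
Your argument is correct and is essentially the paper's own proof: both read off the constant term $\P_k(0)=(-1)^k q^{\binom{k}{2}}\qbin{n}{k}{q}$ and compare with \eqref{final-A-expression}, and both observe that only the $j=k$ summand of \eqref{definition-of-P} contributes to the $x^k$ coefficient, whose value $(q^n-1)q^{k(n-k)+n(k-1)}$ agrees with your $(q^n-1)q^{k(k-1)+(2k-1)(n-k)}$. The only difference is cosmetic bookkeeping of the exponents of $q$, and your exponent identity $\sum_i\bigl[\alpha_i(\alpha_i-1)+(2\alpha_i-1)(n-\alpha_i)\bigr]=\varepsilon(\alpha)+n(n-1)$ checks out.
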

\begin{proof}
Note from the definition \eqref{definition-of-P} of $\P_k(x)$ that it is a polynomial in $x$ of degree $k$, with constant coefficient $(-1)^k q^{\binom{k}{2}} \qbin{n}{k}{q}$.  Hence $\P_\alpha(x)$
is polynomial in $x$ of degree $\sum_i \alpha_i=n$ with constant coefficient
\[
\prod_{i=1}^m 
(-1)^{\alpha_i} q^{\binom{\alpha_i}{2}} \qbin{n}{\alpha_i}{q} 
  = \frac{-A}{(q;q)_{n-1}},
\]
where the last equality uses \eqref{final-A-expression}.
One sees that in \eqref{definition-of-P}, 
the $x^k$ coefficient in $\P_k(x)$ is entirely accounted for 
by the $j=k$ summand,  and is equal to
\[
(-1)^k q^{ \binom{k}{2} + k(n-k) + \sum_{j=n-k+1}^{n-1} j} \cdot(q^n-1)
= q^{k(n-k)+n(k-1)} \cdot (q^n-1).
\]
Therefore the product $\P_\alpha(x) = \prod_{i} \P_{\alpha_i}(x)$
has leading coefficient 
\[
q^{\sum_i \alpha_i(n-\alpha_i)+n(\alpha_i-1) } \cdot (q^n-1)^m
=q^{\varepsilon(\alpha)+n(n-1)} \cdot (q^n-1)^m 
. \qedhere
\]
\end{proof}

As $\P_{\alpha}(x)$ has degree $n$ in $x$, the quotient
$\frac{\P_\alpha(x)}{x}$ is a Laurent polynomial with top degree $n-1$
and bottom degree $-1$.  Therefore, combining 
Proposition~\ref{boundary-terms-of-P} 
with \eqref{relevant-q-diffs} gives
\begin{align*}
\Delta_q^{n-1}\left( \frac{\P_\alpha(x)}{x} \right)
&=
(-1)^{n-1} q^{-\binom{n}{2}} [n-1]!_q \cdot x^{-n} \cdot \frac{-A}{(q;q)_{n-1}}
 \quad + \quad [n-1]!_q q^{\varepsilon(\alpha)+n(n-1)} \cdot (q^n-1)^m\\
&=[n-1]!_q 
  \left( 
     (-1)^{n-1} q^{-\binom{n}{2}} \frac{-A}{x^n \cdot (q;q)_{n-1}}
    \quad + \quad q^{\varepsilon(\alpha)+n(n-1)} \cdot (q^n-1)^m
  \right).
\end{align*}
Plugging this into \eqref{second-expression-for-B} and using
$(q-1)^{n-1}[n-1]!_q = (-1)^{n-1}(q;q)_{n-1}$ gives
\begin{align*}
B 
&=  (-1)^{n-1} q^{-\binom{n}{2}} (q;q)_{n-1} 
  \left( 
     (-1)^{n-1} q^{-\binom{n}{2}} \frac{-A}{q^{-n(n-1)} (q;q)_{n-1}}
     \quad + \quad q^{\varepsilon(\alpha)+n(n-1)} \cdot (q^n-1)^m
  \right)\\
&=-A  \quad + \quad  (-1)^{n-1} (q;q)_{n-1}  q^{\varepsilon(\alpha)+\binom{n}{2}}\cdot (q^n-1)^m.
\end{align*}
Using \eqref{GL-cardinality},
one can finally compute from \eqref{A+B-expression} that
\[
f_\alpha[e,c] 
  =\frac{1}{|\GL_n|} (A + B) 
 =\frac{(-1)^{n-1} (q;q)_{n-1} q^{\varepsilon(\alpha)+\binom{n}{2}}} 
            {(-1)^n (q;q)_{n} q^{\binom{n}{2}} } \cdot (q^n-1)^m
 =  q^{\varepsilon(\alpha)} \cdot (q^n-1)^{m-1}.
\]
This concludes the proof of Theorem~\ref{thm:flag-f-vector}. \hfill$\qed$

\vskip.1in
The preceding proof is 
computational and unenlightening.  This prompts the following question.

\begin{question}
\label{Biane-type-proof-question}
Biane \cite{Biane} has given a short, inductive proof of 
\eqref{cactus-special-case} not relying on any auxiliary objects (trees, maps, etc.).  Is there an analogous proof of Theorem~\ref{thm:flag-f-vector}?
\end{question}

\begin{question}
Is there a reasonable $q$-analogue of the cactus formula (Theorem~\ref{cactus-theorem}) in full generality, not just in the special case \eqref{cactus-special-case}?
\end{question}
\noindent
We currently have no conjectural candidate for such a $q$-analogue.

\section{Reformulating the flag $f$-vector}
\label{reformulation-section}

The goal of this section is to prove Proposition~\ref{chain-subspace proposition}, 
a linear algebraic reformulation of
$f_{\alpha}[e,c]$ when $V^c=0$.  We hope that this reformulation may be more amenable to 
combinatorial counting methods.  In particular, we show below that
it helps recover somewhat more directly the rank sizes
for $[e,c]$ given in \eqref{interval-rank-sizes}

\begin{definition}
Fix a field $\FF$, and let $V$ be an $n$-dimensional $\FF$-vector space.

Given a sequence
$g_\bullet:=(g_0, g_1, \ldots,g_{m-1},g_m)$ with $g_i$ in $\GL(V)$,
define a sequence of subspaces 
$
\varphi(g_\bullet):=(V_1,\ldots,V_m)
$
via 
\[
V_i:=V^{g_{i-1}} \cap V^{g_i^{-1}g_{m}}.
\]
  
Fix $c$ in $\GL(V)$.
Given an ordered vector space decomposition 
$V_\bullet=(V_i)_{i=1}^m$ of $V$, 
so that
\[
V=\underbrace{V_1 \oplus V_2 \oplus \cdots \oplus V_i}_{=:V_{\leq i}}
\oplus
\underbrace{V_{i+1} \oplus V_{i+2} \oplus \cdots \oplus V_m}_{=:V_{> i}},
\]
define a sequence 
$
\psi(V_\bullet):=(g_0,g_1,\ldots,g_{m-1},g_m)
$
of $\FF$-linear maps $g_i: V \rightarrow V$ by 
\[
g_i(x + y)=c(x) +y \qquad \text{ for } x, y \text{ in } V_{\leq i}, V_{> i}, \text{ respectively}.
\]
\end{definition}

\begin{proposition}
\label{chain-subspace proposition}
Let $V=\FF^n$ for a field $\FF$, and let $c$ lie in $G := \GL(V)$ with $V^c=0$.
Then the maps $\varphi, \psi$ restrict to inverse bijections
between these two sets:
\begin{compactenum}[(a)]
\item
multichains $g_\bullet:=(e=g_0 \leq g_1 \leq \cdots \leq g_{m-1} \leq g_m=c)$
in absolute order on $G$, and
\item
decompositions $V_\bullet=(V_i)_{i=1}^m$
satisfying
$V=c(V_{\leq i}) \oplus V_{> i}$ for every $i=0,1,\ldots,m$.
\end{compactenum}
Moreover, they satisfy $\dim V_i = \ell(g_i)-\ell(g_{i-1})$.

In particular, when $\FF=\FF_q$ is finite, for any composition $\alpha = (\alpha_1, \ldots, \alpha_m)$ of $n$, the flag number
$f_\alpha[e,c]$ counts decompositions $V_\bullet$ as in (b) having $\dim V_i= \alpha_i$ for $i=1,2,\ldots,m$.
\end{proposition}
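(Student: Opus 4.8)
The plan is to verify directly that $\varphi$ and $\psi$ are well-defined maps between the two sets (a) and (b), that they are mutually inverse, and that they match up the dimension/rank data; the final ``in particular'' statement is then immediate from Definition~\ref{flag-f-definition}. Throughout I will use the characterization of the absolute order via Remark~\ref{alternate-characterization-remark}: for $g\le h$ in $\GL(V)$ one has $V^g + V^{g^{-1}h} = V$ and $V^g\cap V^{g^{-1}h} = V^h$, and that this order is graded by $\ell = \codim(V^{-})$ (Proposition~\ref{length=codim}).

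First I would show $\psi$ lands in set (a). Given a decomposition $V_\bullet$ as in (b), the map $g_i$ defined by $g_i(x+y) = c(x) + y$ for $x\in V_{\le i}$, $y\in V_{>i}$ is invertible precisely because $V = c(V_{\le i})\oplus V_{>i}$; note $g_0 = e$ and $g_m = c$ since $V^c = 0$ forces $V_{>0}$ and $V_{\le m}$ to be $V$. One computes $V^{g_i} = \{x+y : c(x) = x\} = V_{>i}$ (using $V^c=0$ to kill the $V_{\le i}$ part), and more generally $V^{g_{i-1}^{-1}g_i} = (V_{\le i-1}\oplus V_{>i}) = V_{\le i-1}\oplus V_{>i}$ — the subspace on which $g_{i-1}$ and $g_i$ agree, which is everything except $V_i$, so $V^{g_{i-1}^{-1}g_i}$ has codimension $\dim V_i$. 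Then $V^{g_{i-1}} + V^{g_{i-1}^{-1}g_i} = V_{>i-1} + (V_{\le i-1}\oplus V_{>i}) = V$ and $V^{g_{i-1}}\cap V^{g_{i-1}^{-1}g_i} = V_{>i-1}\cap(V_{\le i-1}\oplus V_{>i}) = V_{>i} = V^{g_i}$, so $g_{i-1}\le g_i$ by Remark~\ref{alternate-characterization-remark}, and $\ell(g_i) - \ell(g_{i-1}) = \codim V_{>i} - \codim V_{>i-1} = \dim V_i$.

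Next I would show $\varphi$ lands in set (b). Given a multichain $g_\bullet$ as in (a), set $V_i := V^{g_{i-1}}\cap V^{g_i^{-1}g_m}$. The key identity to establish, by descending induction on $i$, is that $V^{g_i} = V_{>i} := V_{i+1}\oplus\cdots\oplus V_m$, together with $V = V_{\le i}\oplus V^{g_i}$ for all $i$; since $V^c = V^{g_m} = 0$ this is the case $i=m$, and passing from $i$ to $i-1$ uses the two equalities $V^{g_{i-1}} + V^{g_{i-1}^{-1}g_i} = V$ and $V^{g_{i-1}}\cap V^{g_{i-1}^{-1}g_i} = V^{g_i}$ coming from $g_{i-1}\le g_i$, combined with $V^{g_{i-1}^{-1}g_m} = V^{g_{i-1}^{-1}g_i}\cap V^{g_i^{-1}g_m}$ (which itself follows from $g_i\le g_m$ applied after left-translating, i.e.\ from $g_{i-1}^{-1}g_i \le g_{i-1}^{-1}g_m$). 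Once $V^{g_i} = V_{>i}$ is known, the condition $V = c(V_{\le i})\oplus V_{>i}$ follows because $c = g_m$ maps $V = V_{\le i}\oplus V^{g_i}$ isomorphically, fixing $V^{g_i}$ pointwise would be false in general — instead one argues $c(V_{\le i})\cap V_{>i} = 0$ by a dimension count plus the fact that any vector in the intersection would have to come from $V^{g_i}\cap(\text{something})$; I would make this precise using that $g_i$ agrees with $c$ on $V_{\le i}$ and with the identity on $V_{>i}$, so $c(V_{\le i}) = g_i(V_{\le i})$ and $g_i$ is invertible with $g_i(V_{>i}) = V_{>i}$, forcing $V = g_i(V) = g_i(V_{\le i})\oplus g_i(V_{>i}) = c(V_{\le i})\oplus V_{>i}$. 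That last line is in fact the cleanest route and also handles well-definedness of $\psi$.

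Finally, $\varphi\circ\psi = \mathrm{id}$ and $\psi\circ\varphi = \mathrm{id}$: for the first, starting from $V_\bullet$ and building $g_\bullet = \psi(V_\bullet)$, the computation above gave $V^{g_{i-1}} = V_{>i-1}$ and $V^{g_i^{-1}g_m} = V^{g_i^{-1}c} = V_{\le i}$ (the subspace where $g_i$ and $c$ agree, again using $V^c=0$), so $\varphi(g_\bullet)_i = V_{>i-1}\cap V_{\le i} = V_i$. For the second, starting from $g_\bullet$ and forming $V_\bullet = \varphi(g_\bullet)$, one checks the reconstructed maps $\psi(V_\bullet)_i$ agree with $g_i$ on each summand $V_{\le i}$ and $V_{>i}$ using $V^{g_i} = V_{>i}$ and $V^{g_i^{-1}g_m} = V_{\le i}$ from the induction. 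The ``in particular'' clause is then just the restatement of Definition~\ref{flag-f-definition}: $f_\alpha[e,c]$ counts multichains with $\ell(g_i) - \ell(g_{i-1}) = \alpha_i$, which under the bijection are exactly the decompositions with $\dim V_i = \alpha_i$.

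I expect the main obstacle to be the bookkeeping in establishing $V^{g_i} = V_{>i}$ and the directness of the sum $V = c(V_{\le i})\oplus V_{>i}$ in the $\varphi$ direction — in particular, correctly chaining the order relations $g_{i-1}\le g_i\le\cdots\le g_m$ to get the needed intersection identities among the fixed spaces $V^{g_j^{-1}g_k}$. Everything else is routine linear algebra once the observation $c(V_{\le i}) = g_i(V_{\le i})$ and $V_{>i} = g_i(V_{>i})$ is in hand, since then $V = g_i(V)$ splits the sum for free.
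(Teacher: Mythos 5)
Your proposal is correct in substance and follows the same core strategy as the paper: identify $V_{>i}=V^{g_i}$ and $V_{\le i}=V^{g_i^{-1}c}$ via the fixed-space characterization of Remark~\ref{alternate-characterization-remark}, obtain $V=c(V_{\le i})\oplus V_{>i}$ by applying $g_i$ to $V=V^{g_i^{-1}c}\oplus V^{g_i}$, and verify the $\psi$-direction by the computations $V^{g_i}=V_{>i}$, $V^{g_{i-1}^{-1}g_i}=V_{\le i-1}\oplus V_{>i}$ plus additivity of $\ell=\codim(V^{-})$; you even spell out the inverse-bijection check more explicitly than the paper does. The organizational difference is in proving directness of the sum: the paper inducts on the chain length $m$ by deleting $g_1$ and invokes the self-duality Proposition~\ref{prop:self-duality} to compare the spaces $V^{g_i^{-1}c}$, whereas you propose a descending induction along the chain index using the relation $g_{i-1}^{-1}g_i\le g_{i-1}^{-1}g_m$; note that this relation does not come from ``left-translating'' $g_i\le g_m$ (absolute order is not translation-invariant) but it does follow from adding the three length equalities attached to $g_{i-1}\le g_i\le g_m$, so that ingredient is fine. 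The one genuine slip is the packaging of your induction: the conjunct ``$V=V_{\le i}\oplus V^{g_i}$'' at the base case $i=m$ reads $V=V_1\oplus\cdots\oplus V_m$, which is exactly the conclusion you are trying to reach and does not follow from $V^c=0$ alone. The repair is immediate within your outline: take the inductive claim to be only that $V^{g_i}=V_{i+1}\oplus\cdots\oplus V_m$ as an internal direct sum (trivial at $i=m$), prove the step in the form $V^{g_{i-1}}=V_i\oplus V^{g_i}$ using $V^{g_i}\subseteq V^{g_{i-1}}$ (from $g_{i-1}\le g_i$) together with $V=V^{g_i}\oplus V^{g_i^{-1}c}$ (from $g_i\le c$ and $V^c=0$) and the modular law, and then read off $V=\bigoplus_j V_j$ at $i=0$; the identities $V_{\le i}=V^{g_i^{-1}c}$, needed for $c(V_{\le i})=g_i(V_{\le i})$ and for $\psi\circ\varphi=\mathrm{id}$, then follow from the containments $V^{g_j^{-1}c}\subseteq V^{g_i^{-1}c}$ for $j\le i$ and a dimension count, which is the ``chaining'' bookkeeping you already flagged.
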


\begin{proof}
Given $g_\bullet$ as in (a), we wish to
show that $\phi(g_\bullet)=(V_1,\ldots,V_m)$ is as in (b).
First note that Proposition~\ref{prop:self-duality} and 
$e \le g_{i-1}\leq g_i\le c$ imply $g_i^{-1}c\le g_{i-1}^{-1} c$.
Thus, from Remark~\ref{alternate-characterization-remark} we have
\begin{equation}
\label{fixed subspaces}
\begin{array}{rcccl}
V&=&V^{g_i}&\oplus&V^{g_i^{-1}c}\\
 & &\cap  &      &\cup\\
V&=&V^{g_{i-1}}&\oplus& V^{g_{i-1}^{-1}c}.
\end{array}
\end{equation}

As a first goal, we show $V=\bigoplus_{i=1}^m V_i$ 
via induction on $m$, with the base case $m=1$ being trivial.
In the inductive step, remove $g_1$ from $g_\bullet$ to give  
$g'_\bullet=(e\leq g_2 \leq \cdots \leq g_{m-1} \leq c)$. 
Then $\varphi(g'_\bullet)=(U_2,V_3,V_4,\ldots,V_m)$ satisfies
$V=U_2 \oplus \left(\bigoplus_{i=3}^m V_i\right)$ by induction.  
Moreover, note 
\[
U_2 = V \cap V^{g_2^{-1}c}
 = (V^{g_1^{-1}c} \oplus V^{g_1}) \cap V^{g_2^{-1}c}
 = V^{g_1^{-1}c}\oplus (V^{g_1} \cap V^{g_2^{-1}c})
 = V_1 \oplus V_2,
\]
where the second-to-last equality uses
$V^{g_1^{-1}c} \subset V^{g_2^{-1}c}$ from \eqref{fixed subspaces}. 
Hence $V=\bigoplus_{i=1}^m V_i$.  

We also claim $V_{\leq i}=V^{g_i^{-1}c}$ and $V_{> i}=V^{g_i}$.
To see this, note that for each $j \leq i$ one has
\[
V_j = (V^{g_{j-1}} \cap V^{g_j^{-1}c}) \subset V^{g_j^{-1}c} \subset V^{g_i^{-1}c}
\]
by \eqref{fixed subspaces}, 
and hence $V_{\leq i} \subseteq V^{g_i^{-1}c}$;  a similar argument
shows that $V_{> i} \subseteq V^{g_i}$.  But then
\[
V=V_{\leq i} \oplus V_{> i}=V^{g_i^{-1}c} \oplus V^{g_i}
\]
forces the claimed equalities, as well as $\dim(V_i)=\ell(g_i)-\ell(g_{i-1})$.
Lastly, applying $g_i$ to the decomposition in \eqref{fixed subspaces}, one obtains
the final desired property for (b):
\[
V=g_iV
=g_i(V^{g_i^{-1}c} \oplus V^{g_i})
=g_iV^{g_i^{-1}c} \oplus g_iV^{g_i}
=cV^{g_i^{-1}c} \oplus V^{g_i}
=cV_{\leq i} \oplus V_{> i}.
\]

Conversely, given $V_\bullet$ as in (b), we must show that
$\psi(V_\bullet)=g_\bullet$ is as in (a).   The assumption that
$V=cV_{\leq i} \oplus V_{> i}$ shows that $g_iV = V$, and hence each $g_i$ is
invertible.  

We claim $V^c=0$ shows $V^{g_i}=V_{> i}$: 
expressing
$v=x+y$ uniquely with $x,y$ in $V_{\leq i}, V_{> i}$, one has
$v$ in $V^{g_i}$ if and only if $c(x)+y=x+y$ if and only if $c(x)=x$ if and only if $x=0$.
Similarly, 
$
V^{g_{i-1}^{-1}g_i}=V_{\leq i-1} \oplus V_{> i}
$
.  Hence 
\[
\ell(g_{i-1})+\ell(g_{i-1}^{-1}g_i)=\dim V_{\leq i-1}+\dim V_i =
\dim(V_{\leq i})=\ell(g_i).
\]
Thus $g_{i-1} < g_i$ and so $g_\bullet$ satisfies (a).

Finally, one can check $\phi$ and $\psi$ are inverses to each other.
\end{proof}

\begin{proof}[Alternate proof of Equation \eqref{interval-rank-sizes},
via Proposition~\ref{chain-subspace proposition}]

Choose $c$ in $\GL(V)$ regular elliptic.  By Proposition~\ref{prop:self-duality}, it is enough to show that for $1\leq k\leq n/2$, there are
\[
f_{(k, n - k)}[e,c] = q^{\varepsilon((k, n - k))} \cdot (q^n-1)
                  = q^{ 2k(n-k) - n } \cdot (q^n-1)
\]
elements $g$ in $[e,c]$ of rank $k$.  By Proposition~\ref{chain-subspace proposition}, 
these elements are in bijection with direct sum decompositions
\[
V = \FF_q^n= U\oplus W = cU\oplus W
\]
where $\dim U = k$.  Count such decompositions by first choosing $U$, and
then choosing $W$ complementary to both $U$ and $cU$.  The number of choices of $W$ depends only on $k=\dim U$ and $d:=\dim (U \cap cU)$,
and thus it helps to have the following very special case of a general formula due to Chen and Tseng \cite[p.~28]{ChenTseng}: for
a regular elliptic element $c$ in $\GL_n(\FF_q)$, there  are
\[
g(n,k,d) := \frac{ [n]_q} { [k]_q} \qbin{n-k-1}{k-d-1}{q} \qbin{k}{d}{q} q^{(k-d)(k-d-1)}
\]
subspaces $U$ of $\Fq^n$ for which
$\dim U=k$ and 
$\dim (U \cap cU) = d$, assuming $0 \leq d < k < n$.

Given two $k$-dimensional subspaces $U_1, U_2$ of $V$ with
$\dim (U_1 \cap U_2) = d$ (such as $U_1=U$ and $U_2=cU$ above), 
it is a straightforward exercise to check that when $0 \leq d \leq k \leq n/2$ there are
\begin{equation}
\label{counting co-complements}
f(n, k, d) := q^{ k(n-k) - {k-d+1\choose 2}  } (-1)^{k-d}(q;q)_{k-d}
\end{equation}
subspaces $W$ with $V=U_1 \oplus W=U_2 \oplus W$.
Thus 
\begin{align*}
f_{\alpha}[e,c]&=\sum_{d = 0}^{k-1} g(n, k, d) f(n,k,d) \\
&=\sum_{d=0}^{k-1}
  \frac{ [n]_q} { [k]_q} \qbin{n-k-1}{k-d-1}{q} \qbin{k}{d}{q} q^{(k-d)(k-d-1)}
 \cdot     q^{ k(n-k) - {k-d+1\choose 2}  } (-1)^{k-d}(q;q)_{k-d}\\
&=(q^n-1) q^{k(n-k)-1} 
  \sum_{d=0}^{k-1} \qbin{k-1}{d}{q} (q^{n-k-1}-1)(q^{n-k-1}-q)\cdots (q^{n-k-1}-q^{k-d-2}).
\end{align*}
Finally, we apply the special case
\[
q^{ab} = \sum_{d=0}^a \qbin{a}{d}{q}  (q^b-1)(q^b-q) \cdots (q^b-q^{a-d-1})
\]
of the $q$-Chu--Vandermonde identity \cite[II.6]{GasperRahman} with $(a,b)=(k-1, n-k-1)$ to conclude.
\end{proof}
\begin{remark}
Both the Chen--Tseng result and the needed case of the $q$-Chu--Vandermonde identity have elementary proofs: in the former case by a complicated recursive argument, and in the latter case by counting matrices in $\FF_q^{a \times b}$ by their row spaces (see, e.g., \cite{Landsberg}).
\end{remark}


\section{Final remarks and questions}
\label{remarks}

It was shown by Athanasiadis, Brady and Watt  \cite{AthanasiadisBradyWatt}
that the noncrossing partition posets $[e,c]$ for Coxeter elements $c$ in real reflection groups are EL-shellable; this was extended to well-generated complex reflection groups by M\"uhle \cite{muhle}.  In particular, the open intervals $(e,c)$ are
homotopy Cohen--Macaulay.  They also have predictable Euler characteristics, that is, M\"obius functions $\mu(e,c)$.

  Analogously, Theorem~\ref{thm:flag-f-vector}  allows one to compute for
regular elliptic elements $c$ in $\GL_n(\FF_q)$ that the interval $[e,c]$ in 
the absolute order on $\GL_n(\FF_q)$ has
\[
\mu(e,c)=\sum_{\alpha=(\alpha_1,\ldots,\alpha_m)} (-1)^{m} f_\alpha[e,c]
=\sum_{\alpha=(\alpha_1,\ldots,\alpha_m)} (-1)^{m} q^{\varepsilon(\alpha)} \cdot (q^n-1)^{m-1}.
\]
We do not suggest any simplifications for this last expression.

\begin{question}
Is the open interval $(e,c)$ in the absolute order 
on $\GL_n(\FF_q)$ homotopy Cohen--Macaulay?  Is it furthermore shellable?
\end{question}

\noindent 
Homotopy Cohen--Macaulayness would imply two weaker conditions:
\begin{compactenum}[(i)]
\item
$(-1)^{\ell(y) - \ell(x)} \mu(x, y) \geq 0$ for all $x \leq y$ in $[e,c]$, and
\item
for $i < n-2$, one has vanishing reduced homology $\tilde{H}_i((e,c),\ZZ)=0$.
\end{compactenum}
Condition (i) is easily seen to hold for $n = 2$ or $n = 3$ and any $q$; 
in addition, we have checked by direct computation that it holds for $n = 4$ if $q = 2$ or $3$.  

Condition (ii) is trivial for $n = 2$.  For $n=3$, it amounts to connectivity
of the bipartite graph which is the Hasse diagram for $(e,c)$, and one
can give a direct proof (using Proposition~\ref{chain-subspace proposition}) 
that this graph is connected.  
For $n=4$ and $q=2$ we have checked in Sage \cite{sage} that $\tilde{H}_i((e,c),\ZZ)=0$ 
for $i=0,1$ and $\tilde{H}_2((e,c),\ZZ)=\ZZ^{|\mu(e,c)|}=\ZZ^{1034}.$

Similarly, it was shown by Athanasiadis and Kallipoliti \cite{AthanasiadisKallipoliti} that,
after removing the bottom element $e$, the absolute order on  all
of $\SS_n$ gives a \emph{constructible} simplicial complex, and hence also this poset is homotopy Cohen--Macaulay.  
In type $B_n$, it is open whether removing the bottom element from
the absolute order gives a homotopy Cohen--Macaulay complex; however,
Kallipoliti \cite{Kallipoliti} showed that when one restricts to
the order ideal which is the union of all intervals below Coxeter elements,
one obtains a homotopy Cohen--Macaulay complex.

\begin{question}
\label{abs-order-homotopy-question}
After removing the bottom element from the absolute order on all of $\GL(V)$, say for $V=\FF_q^n$, does one obtain a homotopy Cohen--Macaulay simplicial complex?  What about the order ideal which is the union of all intervals below Singer cycles?
\end{question}

\noindent
For example, for $\GL_3(\FF_2)$, every maximal element in the absolute order
is already a Singer cycle, so that the two simplicial 
complexes in Question~\ref{abs-order-homotopy-question} are the same.
Both have reduced simplicial homology vanishing in dimensions $0,1$,
and isomorphic to $\ZZ^{838}$ in dimension $2$.

In terms of Sperner theory, the poset $[e,c]$ is rank-symmetric and rank-unimodal by \eqref{interval-rank-sizes}, and is self-dual by Proposition~\ref{prop:self-duality}.  This raises a question, suggested by Kyle Petersen.

\begin{question}
For every Singer cycle $c$ in $\GL_n(\FF_q)$, does the absolute order interval $[e,c]$ have a symmetric chain decomposition?
\end{question}

\noindent
The local self-duality proven in Proposition~\ref{prop:self-duality} 
also implies that, for any $c$ in $\GL_n(\FF_q)$,
the {\it Ehrenborg quasisymmetric function} encoding
the flag $f$-vector of the ranked poset $[e,c]$ 
will actually be a symmetric function; see \cite[Thm.\ 1.4]{Stanley-flag}.  
When $c$ is regular elliptic, Theorem~\ref{thm:flag-f-vector}  
lets one compute this symmetric function explicitly, but we did
not find the results suggestive.

Lastly, we ask how the poset $[e,c]$ in $\GL_n(\FF_q)$ depends upon the choice of Singer cycle $c$.  

\begin{question}
Do all Singer cycles $c$ in $\GL_n(\FF_q)$ have isomorphic posets~$[e,c]$?
\end{question}

Certainly $[e,c]$ and $[e,c']$ are poset-isomorphic whenever $c, c'$ are conjugate, and whenever $c'=c^{-1}$.  However, not all Singer cycles can be related by conjugacy and taking inverses.  A similar issue arises for Coxeter elements $c$ in finite reflection groups $W$.  For real reflection groups, all Coxeter elements {\it are} $W$-conjugate.  For well-generated complex reflection groups, they are all related by what Marin and Michel \cite{MM} call {\it reflection automorphisms}, and these give rise to the desired poset isomorphisms $[e,c] \cong [e,c']$; see Reiner--Ripoll--Stump \cite{RRS}.

\begin{remark}
In spite of Theorem~\ref{thm:flag-f-vector}, within some $\GL_n(\FF_q)$ there exist \emph{regular elliptic} elements $c'$ and Singer cycles $c$ for which $[e,c'] \not\cong [e,c]$.
For example, the Singer cycles in $\GL_4(\FF_2)$ are the elements $c$ with
characteristic polynomial $t^4+t+1$ or $t^4+t^3+1$, while the elements $c'$ having characteristic polynomial $1 + t + t^2 + t^3 + t^4$ are regular elliptic but not Singer cycles;
such $c'$ have multiplicative order $5 \neq 15=2^4-1=|\FF_{2^4}^\times|$.
One can check that $[e,c] \not\cong [e,c']$, for example by computing the 
determinants of the $\{0,1\}$-incidence matrices between ranks $1$ and $3$ 
for the two intervals. 
\end{remark}

\appendix
\section{Proof of Proposition~\ref{character values prop}}
\label{technical-proof}

We recall here the statement of the proposition,
giving certain irreducible character values 
for $\GL_n:=\GL_n(\FF_q)$ on
the element $z_k$ in $\CC[\GL_n]$ given by $z_k = \sum_{g \colon \codim(V^g) = k} g$.

\begin{recap}
One has these normalized character values on $z_k$ for certain $\chi^{U,\lambda}$.
\begin{compactenum}[(i)]
\item
For any primary irreducible $\GL_n$-character $\chi^{U,\lambda}$ with
the cuspidal $U \neq \one$ nontrivial,
\[
\normchi^{U, \lambda}(z_k) = (-1)^k q^{\binom{k}{2}} \qbin{n}{k}{q}.
\]
\item
For $U = \one = \one_{\GL_1}$ and $\lambda = \hook{d}{n}$ a hook, we have
\[
\normchi^{\one, \hook{d}{n}}(z_k) =\P_k(q^{-d})
\]
where $\P_k(x)$ is the following polynomial in $x$ of degree $k$:
\[
\tag{\ref{definition-of-P}}
\P_k(x) := (-1)^k q^{\binom{k}{2}} \left(
\qbin{n}{k}{q} + 
\frac{1 - q^n}{[n - k]!_q}  
\sum_{j = 1}^k \frac{[n - j]!_q}{[k - j]!_q} q^{j(n - k)} x \cdot (x q^{n - j + 1}; q)_{j - 1}
\right).
\]
\end{compactenum}
\end{recap}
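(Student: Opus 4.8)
The plan is to reduce the computation of $\normchi^{U,\lambda}(z_k)$ to a statement about spaces of fixed vectors, bring in Green's theory, and finish with some $q$-series bookkeeping. First note that $z_k$ is central in $\CC[\GL_n]$, since conjugation preserves $\codim(V^g)$, so $\normchi(z_k)$ is genuinely the scalar by which $z_k$ acts on an irreducible carrying the character $\chi$. For a subspace $W\subseteq V$ let $Q_W\le\GL_n$ be the pointwise stabilizer of $W$, and set $e_W:=\sum_{g\in Q_W}g$; then $\tfrac1{|Q_W|}e_W$ is the idempotent projecting onto the $Q_W$-fixed space, so $\chi(e_W)=|Q_W|\cdot\dim V_\chi^{Q_W}=|Q_W|\cdot\langle\chi,\Ind_{Q_W}^{\GL_n}\one\rangle$ by Frobenius reciprocity. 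Counting the $(n-j)$-dimensional subspaces sitting inside each $V^g$ shows that $S_j:=\sum_{\dim W=n-j}e_W$ satisfies $S_j=\sum_{i\le j}\qbin{n-i}{j-i}{q}z_i$, and $q$-Möbius inversion in the subspace lattice (exactly as in the proof of Proposition~\ref{abs-order-rank-sizes}) gives $z_k=\sum_{j=0}^k(-1)^{k-j}q^{\binom{k-j}{2}}\qbin{n-j}{k-j}{q}\,S_j$. Since $\GL_n$ is transitive on $(n-j)$-dimensional subspaces, $\chi(S_j)=\qbin{n}{j}{q}\,|Q_W|\,\langle\chi,\Ind_{Q_W}^{\GL_n}\one\rangle$ for any fixed $W$ of codimension $j$, and the whole problem is reduced to computing the multiplicities $\langle\chi^{U,\lambda},\Ind_{Q_W}^{\GL_n}\one\rangle$.

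The crucial observation is that this induced character is a Harish-Chandra product. The group $Q_W$ is normal in the parabolic $P_{n-j,j}$, with $P_{n-j,j}/Q_W$ identified with the $A_1$-block $\GL_{n-j}$; hence $\Ind_{Q_W}^{\GL_n}\one=\Ind_{P_{n-j,j}}^{\GL_n}\Infl_{\GL_{n-j}}(\Reg_{\GL_{n-j}})$. Decomposing $\Reg_{\GL_{n-j}}=\bigoplus_{\psi\in\Irr(\GL_{n-j})}\psi^{\oplus\dim\psi}$ and recalling the paper's definition $\psi\hcprod\one_{\GL_j}=\Ind_{P_{n-j,j}}^{\GL_n}\Infl_{\GL_{n-j}\times\GL_j}(\psi\boxtimes\one_{\GL_j})$, this becomes $\Ind_{Q_W}^{\GL_n}\one=\bigoplus_{\psi}(\psi\hcprod\one_{\GL_j})^{\oplus\dim\psi}$, so that $\langle\chi^{U,\lambda},\Ind_{Q_W}^{\GL_n}\one\rangle=\sum_{\psi}(\dim\psi)\,\langle\chi^{U,\lambda},\psi\hcprod\one_{\GL_j}\rangle$.

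Now Green's theory enters: $\one_{\GL_j}=\chi^{\one,(j)}$ is the unipotent character for the partition $(j)$, and Harish-Chandra multiplication by a unipotent character modifies the cuspidal-support data of $\psi$ only through the Pieri rule on the $\one$-component. Thus $\langle\chi^{U,\lambda},\psi\hcprod\one_{\GL_j}\rangle\neq0$ forces $\psi$ to carry the same non-$\one$ cuspidal data as $\chi^{U,\lambda}$, and forces the $\one$-component of $\chi^{U,\lambda}$ to be obtained from that of $\psi$ by adjoining a horizontal strip of size $j$. When $U\neq\one$ the $\one$-component of $\chi^{U,\lambda}$ is empty, which is impossible for $j\ge 1$; so only the $j=0$ term survives (with $S_0=e$), and part (i) falls out immediately, $\normchi^{U,\lambda}(z_k)=(-1)^kq^{\binom k2}\qbin nk_q$. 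When $U=\one$ and $\lambda=\hook dn$, the multiplicity is $1$ precisely for $\psi=\chi^{\one,\mu}$ with $\hook dn/\mu$ a horizontal strip of size $j$; since the first column of $\hook dn$ has $d+1$ boxes, a short check shows there are at most two such $\mu$, namely $(n-j-d,1^d)$ (when $j\le n-d-1$) and $(n-j-d+1,1^{d-1})$ (when $1\le j\le n-d$ and $d\ge1$).

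It remains to assemble the pieces. Substituting Steinberg's hook-degree formula $\dim\chi^{\one,(m-e,1^e)}_{\GL_m}=q^{\binom{e+1}{2}}\qbin{m-1}{e}{q}$ (the $\GL_{n-j}$-version of Proposition~\ref{Singer-cycle-character-values}(ii)) for the two $\mu$ above, forming $\chi(z_k)=\sum_{j=0}^k(-1)^{k-j}q^{\binom{k-j}{2}}\qbin{n-j}{k-j}{q}\,\chi(S_j)$, and dividing by $\chi(e)=q^{\binom{d+1}{2}}\qbin{n-1}{d}{q}$, one arrives at an explicit finite sum over $j$ of products of Gaussian binomials and $q$-factorials; the claim is that this equals $\P_k(q^{-d})$. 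I expect this last identification to be the main obstacle: one must collapse that sum — which carries two hook-degree terms for each $j$ — into the compact closed form \eqref{definition-of-P} featuring the factor $(xq^{n-j+1};q)_{j-1}$. The $k=1$ case is already instructive, the two terms merging via the identity $[n-1-d]_q+q^{-d}[d]_q=q^{-d}[n-1]_q$; in general I would organize the collapse using the $q$-Pascal recurrence together with a $q$-Chu--Vandermonde (or $q$-binomial theorem) summation, after which the polynomiality in $x=q^{-d}$ and the degree-$k$ statement for $\P_k(x)$ become visible. Everything preceding this final computation is structural and essentially formal.
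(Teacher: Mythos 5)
Your reduction is the paper's proof in slightly different clothing: M\"obius inversion over the subspace lattice to pass from $z_k$ to the elements $S_j$ supported on pointwise stabilizers $Q_W$, the identification $\Ind_{Q_W}^{\GL_n}\one \cong \CC\GL_{n-j}\hcprod\one_{\GL_j}$ (the paper obtains this from the coset bijection $P_{n-j,j}/Q_W\cong\GL_{n-j}$, which is exactly your normality remark), and then cuspidal-support and Pieri considerations. Your part (i) is complete and agrees with the paper, including the observation that only the $j=0$ term survives when $U\neq\one$.

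For part (ii), however, your argument stops just where the content of the statement begins. You correctly isolate the two unipotent constituents $\chi^{\one,(n-j-d,1^d)}$ and $\chi^{\one,(n-j-d+1,1^{d-1})}$, each with multiplicity one, and correctly reduce $\normchi^{\one,\hook{d}{n}}(z_k)$ to an explicit sum over $j$ of hook degrees; but the equality of that sum with $\P_k(q^{-d})$ --- which is precisely what formula \eqref{definition-of-P} asserts --- is left as a flagged ``main obstacle,'' supported only by a $k=1$ sanity check. That verification is where the paper spends its remaining effort, and it goes through along the line you guess, with less machinery than you fear: for each $j$ the two hook degrees collapse by the $q$-Pascal rule,
$q^{\binom{d+1}{2}}\qbin{n-j-1}{d}{q}+q^{\binom{d}{2}}\qbin{n-j-1}{d-1}{q}=q^{\binom{d}{2}}\qbin{n-j}{d}{q}$,
and then each $j$-summand is matched term by term with the $j$th summand of \eqref{definition-of-P}, using the rewriting of $\qbin{n-j}{d}{q}/\qbin{n-1}{d}{q}$ through the factor $(q^{\,n-j-d+1};q)_{j-1}=(xq^{n-j+1};q)_{j-1}$ at $x=q^{-d}$; no $q$-Chu--Vandermonde summation is needed. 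One further reconciliation your plan must include: your sum naturally stops at $j=\min(k,n-d)$ while \eqref{definition-of-P} runs to $j=k$; this is consistent because $(q^{\,n-j-d+1};q)_{j-1}$ vanishes at $x=q^{-d}$ whenever $j>n-d$. So the approach is the paper's and part (i) is proved, but as written part (ii) is only reduced to, not closed by, the identity defining $\P_k$.
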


\begin{remark}
Taking $d=0$ in Proposition~\ref{character values prop}(ii),
the character $\normchi^{\one,(n)}$ is the trivial 
character $\one_{\GL_n}$.  Hence 
$\normchi^{\one,(n)}(z_k)=r_q(n,k)$ is the $k$th rank size
for the absolute order on $\GL_n$, as 
computed in Proposition~\ref{abs-order-rank-sizes}.  
It is not hard to check
that the formula for $r_q(n,k)$ given there
is consistent with the $d=0$ case of 
Proposition~\ref{character values prop}(ii),
that is, with $\P_k(1)$.
\end{remark}

\begin{proof}[Proof of Proposition~\ref{character values prop}]
We begin the proof of both assertions (i) and (ii)
with a M\"obius function calculation as in the proof of 
Proposition~\ref{abs-order-rank-sizes}.  

Fix a character $\chi$.  Since $\chi$ is a class function, 
one has for any fixed subspace $X$ in $V$ of codimension $k$ that 
\[
\normchi(z_k):= \sum_{\substack{g \in \GL_n\colon \\\codim(V^g) = k}} \normchi(g) = \qbin{n}{k}{q}  F(X)
\quad \text{ where }
F(X) :=
\sum_{\substack{g \in \GL_n\colon \\V^g = X}} \normchi(g).
\]
Rather than $F(X)$, it is more convenient to compute 
\[
G(X) := \sum_{\substack{g \in \GL_n\colon \\V^g \supseteq X}} \normchi(g) 
       =  \sum_{Y \colon X \subseteq Y \subseteq V} F(Y).
\]
Then by M\"obius inversion \cite[Ex.~3.10.2]{EC1} on the 
lattice of subspaces of $\Fq^n$ we have
\[
F(X) = \sum_{Y\colon X \subseteq Y \subseteq V}  
         (-1)^{\dim Y - \dim X} q^{\binom{\dim Y - \dim X}{2}}  G(Y).
\]
It follows that
\begin{equation}
\label{after mobius}
\normchi(z_k) = 
\qbin{n}{k}{q} 
\sum_{j = 0}^k (-1)^{k - j} q^{\binom{k - j}{2}} \qbin{k}{j}{q} G(Y)
\end{equation}
where $Y := Y_j$ is any particular subspace of codimension $j$.
Thus it only remains to compute 
$G(X)$ where $X$ is a particular codimension-$k$ subspace;
for concreteness, we take $X$ to be the span of the 
first $n-k$ standard basis vectors in $V$.

If $k = 0$ then $X = V$ and $G(X)=\normchi(e) = 1$.  
Thus, in what follows we assume $k \geq 1$.

Abbreviate a tower of groups 
\[
\begin{array}{rcccc}
\GL_n &\supset& P& \supset& H \\
     &       & \Vert& &\Vert \\
     &       & 
\left\{ \left[ \begin{matrix} A_1 &B\\
                              0 & A_2
               \end{matrix} \right] \right\}& & 

\left\{ \left[ \begin{matrix} \one_{n-k} &B\\
                              0 & A_2
               \end{matrix} \right] \right\} 
\end{array}
\]
in which $P$ is the parabolic (block upper triangular) subgroup stabilizing
$X$ (\emph{not} necessarily pointwise), and
$H$ is the subgroup of $P$ that fixes $X$ pointwise.
Also recall that inside $P$ 
one finds the block-diagonal product group $\GL_{n-k} \times \GL_k$.
Still fixing a $\GL_n$-character $\chi$, we compute
\[
G(X) = \sum_{h \in H} \normchi(h) 
 = \frac{|H|}{\chi(e)} 
      \left\langle  \quad \chi\downarrow^{\GL_n}_{H} \, , \quad \one_{H} \quad \right\rangle_{H} 
 =  \frac{|H|}{\chi(e)} \left\langle \quad \chi  \, , \quad \one_{H} \uparrow^{\GL_n}_{H}  \quad \right\rangle_{\GL_n}
\]
via Frobenius Reciprocity for induction $(-)\uparrow^{\GL_n}_{H}$ and 
restriction $(-)\downarrow^{\GL_n}_{H}$.  
The map sending
\[
p=\left[ \begin{matrix} A_1 &B\\
                      0 & A_2
       \end{matrix} \right] 
\longmapsto A_1
\]
induces a bijection $P/H \rightarrow \GL_{n-k}$ showing that 
the left-translation action of $p$ on cosets $P/H$ is isomorphic to the left-regular action of $A_1$ on $\GL_{n-k}$.  Hence
\[
\one_{H} \uparrow^{P}_{H} 
   = 
   \left( \CC \GL_{n-k} \otimes \one_{\GL_k} \right)
     \Uparrow_{\GL_{n-k} \times \GL_k}^P,  
\]
where $\CC \GL_{n-k}$ is the regular representation of $\GL_{n-k}$,
and recall that $(-)\Uparrow_{\GL_{n-k} \times \GL_k}^P$ denotes inflation of
a $\GL_{n-k} \times \GL_k$-representation to a $P$-representation
by precomposing with the surjection 
$P \twoheadrightarrow \GL_{n-k} \times \GL_k$.
Hence, via transitivity of induction, one can rewrite
\[
\one_{H} \uparrow^{\GL_n}_{H} 
  = \left( \one_{H} \uparrow^{P}_{H} \right) \uparrow^{\GL_n}_{P}
=   \left( \CC \GL_{n-k} \otimes \one_{\GL_k} \right) 
       \Uparrow_{\GL_{n-k} \times \GL_k}^P
         \uparrow_P^{\GL_n} 
= \CC \GL_{n-k} \hcprod \one_{\GL_{k}}.
\]
To apply \eqref{after mobius}, we need to compute
for $\codim(X)=k \geq 1$ the values
\begin{equation}
\label{regular-rep-sum}
\begin{aligned}
G(X) 
&=
\frac{|H|}{\chi(e)} 
\left\langle \quad \chi  \, , \quad 
\CC \GL_{n - k} *  \one_{\GL_{k}} \quad 
\right\rangle_{\GL_n} \\
&=
\frac{|H|}{\chi(e)} 
\sum_{\llambda} \chi^\llambda(e)
\left\langle \quad \chi  \, , \quad 
\chi^\llambda *  \chi^{\one_{\GL_1},(k)} \quad 
\right\rangle_{\GL_n} 
\end{aligned}
\end{equation}
with $\chi^\llambda$ running through $\Irr(\GL_{n-k})$.
We compute this now for $\chi$ as in assertions (i), (ii).

\subsection*{Assertion (i).}
Here $\chi=\chi^{U,\lambda}$ with $U \neq \one_{\GL_1}$.
In this case, 
$
\left\langle \chi^{U,\lambda}  \, , \quad 
\chi^\llambda *  \chi^{\one_{\GL_1},(k)} 
\right\rangle_{\GL_n}
$
always vanishes, since 
$\chi^\llambda *  \chi^{\one_{\GL_1},(k)}$
cannot have the primary irreducible 
$\chi^{U,\lambda}$ as a constituent:  its irreducible constituents
$\chi^{\underline{\mu}}$ must each have $\mu$ assigning the cuspidal
$\one_{\GL_1}$ a partition of weight at least $k$,
and hence are not irreducibles primary to $U$.
Consequently, \eqref{after mobius} gives the desired answer
\[
\normchi^{U, \lambda}(z_k) 
 = \qbin{n}{k}{q} (-1)^k q^{\binom{k}{2}} \qbin{k}{0}{q} \cdot 1 
 = (-1)^k q^{\binom{k}{2}} \qbin{n}{k}{q}.
\]

\subsection*{Assertion (ii).}
Here $\chi=\chi^{\one_{G_1},\hook{d}{n}}$.
We claim that $k > 0$ and {\it Pieri's rule} \cite[(5.16)]{Macdonald} for expanding
the induction product of $\chi^\lambda$ and $\chi^{(k)}$  imply that
almost every $\chi^{\llambda}$ in $\Irr(\GL_{n-k})$ 
has the inner product 
$
\left\langle \chi^{\one_{\GL_1},\hook{d}{n}}  \, , \quad 
\chi^\llambda *  \chi^{\one_{\GL_1},(k)} 
\right\rangle_{\GL_n}
$
vanishing, unless both
\begin{itemize}
\item $k \leq n - d$, and 
\item $\chi^{\llambda}=\chi^{\one_{\GL_1},\lambda}$
for either $\lambda=\hook{d}{n - k}$ or $(n - k - d + 1, 1^{d - 1})$, 
\end{itemize}
in which case the inner product is $1$.  
Hence, starting with \eqref{regular-rep-sum}, we compute
\begin{align*}
G(X) 
&= \frac{|H|}{ \chi^{\one_{\GL_1},\hook{d}{n}}(e)} 
\sum_{\llambda} \chi^\llambda(e)
\left\langle \quad \chi^{\one_{\GL_1},\hook{d}{n}}  \, , \quad 
\chi^\llambda *  \chi^{\one_{\GL_1},(k)} \quad 
\right\rangle_{\GL_n} \\
&=\frac{|H|}{ \chi^{\one_{\GL_1},\hook{d}{n}}(e)}
\left(  
\chi^{\one_{\GL_1},\hook{d}{n-k}}(e)
+
\chi^{\one_{\GL_1},(n-k-d+1,1^{d-1})}(e)
\right)\\
& = \frac{|H|}{q^{\binom{d + 1}{2}} \qbin{n - 1}{d}{q}} 
 \left( q^{\binom{d + 1}{2}} \qbin{n - k - 1}{d}{q} + q^{\binom{d}{2}} \qbin{n - k - 1}{d - 1}{q}\right) \\
& = \left. (-1)^k (q;q)_k q^{\binom{k}{2}+ k(n - k)-d}
        \qbin{n - k}{d}{q} \middle\slash \qbin{n - 1}{d}{q} \right. .
\end{align*}
Plugging this result into \eqref{after mobius}, after separating out the $j=0$ summand, gives
\begin{align*}
\normchi^{\one, \hook{d}{n}}(z_k)  
& =  (-1)^k q^{\binom{k}{2}}\qbin{n}{k}{q}
 \left(
1 + 
\sum_{j = 1}^{\min(k, n - d)} (-1)^{j} q^{j(n - k) - d}  \qbin{k}{j}{q} 
 \frac{ (-1)^j (q;q)_j  \qbin{n - j}{d}{q}}
            {\qbin{n - 1}{d}{q} }
\right) \\
& = 
 (-1)^k q^{\binom{k}{2}} 
\left(
\qbin{n}{k}{q} + 
\frac{1 - q^n}{[n-k]!_q} 
\sum_{j = 1}^{\min(k, n - d)} q^{j(n - k) - d} 
 (q^{n - j - d + 1}; q)_{j - 1} 
\frac{[n - j]!_q }
{[k -j]!_q  }
\right) \\
& = \P_k(q^{-d}). \qedhere
 \end{align*}
\end{proof}

\bibliography{AbsGL}{}
\bibliographystyle{plain}

\end{document}